\documentclass[a4paper,fleqn]{cas-dc}

\usepackage[numbers]{natbib}
\usepackage{subcaption}
\usepackage{comment}
\usepackage{tikz}

\def\tsc#1{\csdef{#1}{\textsc{\lowercase{#1}}\xspace}}
\tsc{WGM}
\tsc{QE}
\newcommand{\rhohat}{\hat{\rho}}
\newcommand{\vhat}{\hat{v}}
\newcommand{\M}{M(\rhohat(1),\vhat(1),1)}
\newcommand{\w}{\vec{w}(\rhohat(1),\vhat(1),1)}
\newcommand{\loc}{\text{loc}}
\newcommand{\crit}{\text{crit}}
\DeclareMathOperator*{\argmin}{arg\,min}

\newtheorem{theorem}{Theorem}
\newtheorem{lemma}[theorem]{Lemma}
\newproof{pf}{Proof}
\newproof{pot}{Proof of Theorem \ref{thm}}

\begin{document}
\let\WriteBookmarks\relax
\def\floatpagepagefraction{1}
\def\textpagefraction{.001}

\shorttitle{Multidimensional Cutoff Implosions}    

\shortauthors{Luong et al.}  

\title [mode = title]{Finite time implosions of the multidimensional compressible Euler equations with a finite far-field cutoff}  

\tnotemark[1] 

\tnotetext[1]{This work was supported by the US Department of Energy through the Los Alamos National Laboratory. Los Alamos National Laboratory is operated by Triad National Security, LLC, for the National Nuclear Security Administration of the US Department of Energy (Contract No. 89233218CN000001).} 

%

\author[1]{Jack Luong}[orcid = 0000-0003-4674-8578]

\cormark[1]

\fnmark[1]
\fntext[fn1]{Jack Luong and Andrea L. Bertozzi are supported by Los Alamos National Lab award CW93449 C5677 and ONR grant N00014-23-1-2565.}
\ead{jqluong@g.ucla.edu}

\ead[url]{}

\credit{Conceptualization, Methodology, Software, Validation, Formal Analysis, Investigation, Writing - Original, Writing - Review \& Editing, Visualization}

\affiliation[1]{organization={Department of Mathematics, University of California, Los Angeles},
            addressline={520 Portola Plaza}, 
            city={Los Angeles},
            postcode={90095}, 
            state={California},
            country={USA}}

\author[1]{Reed Spitzer}[orcid = 0000-0001-9949-4343]

\fnmark[2,3]
\fntext[2]{Reed Spitzer and Andrea L. Bertozzi are supported by Simons Foundation Math + X Grant 510776}
\fntext[3]{Reed Spitzer is funded by the Development and Implementation Graduate Student Fellowship Program between LANL Los Alamos National Laboratory and University of California Southern California Hub Consortium under contract 89233218CNA000001}
\ead{reedbruin12@g.ucla.edu}

\ead[url]{}

\credit{Methodology, Software, Formal Analysis, Investigation, Writing - Review \& Editing, Visualization}

\author[2]{Scott D. Ramsey}


\ead{ramsey@lanl.gov}

\ead[url]{}

\credit{Conceptualization, Methodology, Writing - Review \& Editing, Supervision, Project administration, Funding acquisition}

\affiliation[2]{organization={Applied Physics, Los Alamos National Laboratory},
            addressline={P. O. Box 1663, MS T082}, 
            city={Los Alamos},
            postcode={97545}, 
            state={New Mexico},
            country={USA}}

\author[1,4]{Andrea L. Bertozzi}[orcid = 0000-0003-0396-7391]

\fnmark[1,2,4]
\fntext[4]{A portion of this work was conducted while Andrea L. Bertozzi was an Ulam scholar at the Center for Nonlinear Studies at Los Alamos National Laboratory.}
\ead{bertozzi@g.ucla.edu}

\ead[url]{}

\credit{Conceptualization, Methodology, Formal Analysis, Writing - Review \& Editing, Supervision, Project administration, Funding acquisition}

\affiliation[3]{organization={Department of Mechanical Engineering, University of California, Los Angeles},
            addressline={BOX 951597, 48-121 E4}, 
            city={Los Angeles},
            postcode={90095}, 
            state={California},
            country={USA}}

\author[2]{Roy Baty}[orcid = 0000-0003-0499-2636]


\ead{rbaty@lanl.gov}

\ead[url]{}

\credit{Conceptualization, Supervision, Project administration, Funding acquisition}

\cortext[1]{Corresponding author}



\begin{abstract}
We study the problem of finite time singularities in the isentropic compressible Euler equations in multiple dimensions with radial symmetry.  Our initial data consists of the radially symmetric Kidder self-similar implosion with a continuous constant density cutoff.  Motivated by our recent work on  the one-dimensional problem (Luong et al, Discrete and Continuous Dynamical Systems, 2027, 58: 197-221), we present high-resolution radially symmetric adaptive mesh refinement simulations that when combined with asymptotic similarity solution analysis present a full picture of the ensuing dynamics of the implosion. From the cutoff point in the initial data, an expanding wave emerges. The endpoints of the wave follow the "imploding" and "exploding" characteristic speeds associated with the system of hyperbolic conservation laws.  This wave has much more complex structure than the exact non-centered rarefaction wave that emerges in the one-dimensional problem.  As in the one-dimensional case, the inner Kidder solution is swallowed up by the expanding wave as time approaches the singularity time.  However, unlike the one-dimensional case, a blowup occurs along the point where the `imploding' side of the expanding wave attaches to the Kidder solution.  This composite blowup has an asymptotic self-similar form that has an interpretation in terms of the Gurderley-type self-similarity structure.  The numerical method for the PDE necessitates a Lagrangian scheme to avoid the numerical diffusion inherent in Eulerian methods.  We use a leapfrog method coupled to recursive refinement of the mesh near the blowup, which allows for very high resolution simulations, resolving many orders of magnitude of the self-similar blowup.  Consistent with prior rigorous results for these equations, the blowup does not involve mass concentration and exhibits a critical $L^p$ space for the concentration of the density, depending on the dimension of space. 
\end{abstract}




\begin{keywords}
 Compressible Euler Equations \sep Implosions \sep Potential Flow \sep Similarity Solutions
\end{keywords}

\maketitle

\section{Introduction}\label{sec:Intro}
\subsection{Background}
Nonlinear hyperbolic conservation laws exhibit \textit{finite time blowup} - the phenomenon in which solutions to nonlinear hyperbolic conservation laws lose regularity in finite time. The prototypical example of a finite time blowup is the formation of a discontinuity in the gradient of solution variables, referred to as a \textit{shock}. In the study of gas dynamics, another form of finite time blowup may occur where the solution variables themselves grow to infinity in finite time. An \textit{implosion} occurs when a flow directed towards the origin collapses into itself. Implosions occur in processes such as cavitation \cite{Hunter_cavity}, sonoluminescence \cite{Brenner_sonoluminescence}, gas cloud accretion of galaxies \cite{Woodward_implosion}, star formation \cite{Hunter_supernova}, and inertial confinement fusion (ICF) \cite{Kidder1974}.

\par We are interested in implosions for the Euler equations under special considerations. The radially symmetric, compressible Euler equations detailing the conservation of mass, momentum, and entropy are
\begin{equation}\label{eq:con_mass}
    \frac{\partial \rho}{\partial t} + v \frac{\partial \rho}{\partial r} + \rho\frac{\partial v}{\partial r} + \frac{n\rho v}{r} = 0,
\end{equation}
\begin{equation}\label{eq:con_mom}
    \frac{\partial v}{\partial t} + v \frac{\partial v}{\partial r} + \frac{1}{\rho} \frac{\partial P}{\partial r} = 0,
\end{equation}    
\begin{equation}\label{eq:con_entropy}
    \frac{\partial}{\partial t} \ln{P\rho^{-\gamma}} + v\frac{\partial}{\partial r}\ln{P \rho^{-\gamma}} = 0,
\end{equation}
where $\gamma$ is the heat capacity ratio, $P$ is the pressure, $\rho$ is the density, $v$ is the velocity, and $\ln{P\rho^{-\gamma}}$ is the entropy. The spatial index $n$ is $0$ for planar geometry, $1$ for cylindrical geometry, and $2$ for spherical geometry. We refer to equations \eqref{eq:con_mass}, \eqref{eq:con_mom}, and \eqref{eq:con_entropy} as the \textit{Full Euler Equations}, in contrast to the simplified setting we consider. When the entropy is initially constant throughout the domain, we neglect the conservation of entropy equation and write the pressure as a function only of the density, 
\begin{equation}\label{eq:pressure}
P(\rho) = C\rho^{\gamma} 
\end{equation}
where $C$ is a positive constant. The \textit{isentropic}, radially symmetric, compressible Euler equations are thus
\begin{equation}\label{eq:isen_mass}
    \frac{\partial \rho}{\partial t} + v \frac{\partial \rho}{\partial r} + \rho\frac{\partial v}{\partial r} + \frac{n\rho v}{r} = 0,
\end{equation}
\begin{equation}\label{eq:isen_mom}
    \frac{\partial v}{\partial t} + v \frac{\partial v}{\partial r} + C\gamma\rho^{\gamma - 2} \frac{\partial \rho}{\partial r} = 0.
\end{equation}
Equations \eqref{eq:isen_mass} and \eqref{eq:isen_mom} are also referred to as the \textit{potential flow} equations when the velocity is also assumed to be irrotational, which is guaranteed for flows with radial symmetry. The local speed of sound is
\begin{equation}\label{eq:sos}
    c = \sqrt{C\gamma}\rho^{\frac{\gamma - 1}{2}}.
\end{equation}

\subsection{Related Work}
We review the relevant background on finite time blowup in this subsection. First, we discuss classical results for shocks and general finite time blowup. Afterwards, the imploding solution of Guderley \cite{Guderley1942} and subsequent work is summarized. Most relevant to this current work are the linear velocity solutions introduced by Sedov \cite{Sedov_book} and applied in \cite{Kidder1974}.

\subsubsection{Shocks}
The majority of the literature around finite time blowup centers around shocks, and we mention here a few key results. In his seminal work, Lax  \cite{Lax1964} used Riemann invariants to show shock formation for small initial data of $2 \times 2$ systems of hyperbolic conservation laws. Shock formation is described in more detail for the 1D isentropic Euler equations in \cite{Lebaud_thesis}. For large data, the authors of \cite{Chen_shock} extend the use of Riemann invariants for the isentropic Euler equations.

\par For higher dimensions, Sideris \cite{Sideris_1985} gave the first result on finite time blowup for the Full Euler equations. Assuming far field initial conditions, Sideris studied the evolution of an energy to determine finite time blowup. However, this argument cannot reveal whether the blowup is a shock or an implosion. The author of \cite{Yin_3DShock} proved this finite time singularity can be a shock under the additional assumption of spherical symmetry. By relaxing the condition of spherical symmetry and instead assuming irrotational solutions, Christodoulou and Miao \cite{Christodolou_irrotational} proved initial compression leads to shocks for potential flow with constant far field initial data. In a similar setting of constant far field initial density, Chen and Wang \cite{Chen_2022} show global existence of weak solutions to the isentropic Euler equations with spherical symmetry in higher dimensions by vanishing viscosity. They note the global existence is given in $L^p_{\text{loc}}$, which permits the existence of implosion solutions. Furthermore, they show these solutions cannot concentrate mass. 

\subsubsection{Guderley}
\par While implosions are not as well-studied as shocks, there is still a significant amount of literature related to implosions, especially in recent years. The first study of implosions for compressible fluids is the work of Guderley \cite{Guderley1942}. By casting the problem into a system of ODEs in the self-similar variables, Guderley uses the corresponding phase portrait to construct the imploding solution. The solution features a shock converging to the origin which drives blowup in velocity and pressure, although density remains bounded. Guderley's approach has formed the backbone for substantial subsequent work in imploding solutions. For recent and direct continuations to Guderley's work, Jang et al.\ \cite{Jang_2025} give a rigorous construction of the Guderley solution while Cialdea et al.\ \cite{Cialdea_2025} show the Guderley solution can arise from non-shocked initial data. In an attempt to generalize the Guderley solution for non-ideal gases, the authors of \cite{Boyd2017} discuss the necessary form of the adiabatic bulk modulus to produce self-similar solutions to the Guderley problem.

\par Since the Guderley solution begins with shocked initial data, it remains unclear whether an implosion can arise from continuous or even smooth initial data. Many recent results have answered this question for the isentropic Euler equations with self-similar solutions. Jennsen and Tsikkou \cite{Jenssen2020} construct self-similar, radially symmetric, and continuous imploding solutions with strictly positive pressure, in contrast to the vanishing pressure of the Guderley solutions. Their solution has unbounded energy, but they argue they can form a finite mass and energy variant of the solution with a finite speed of propagation argument.

\par Solutions that are smooth up until implosion were studied in Merle et al.\ \cite{implosion_1}. The authors rigorously show the existence of smooth imploding self-similar solutions to the isentropic, radially symmetric Euler equations with careful phase portrait analysis. These solutions are used to construct imploding solutions to the Navier-Stokes equations in \cite{implosion_2}. Merle et al.\ show this could be done for almost every choice of heat capacity ratio $\gamma > 1$. However, one of the omitted $\gamma$ is $\gamma = 5/3$ which has physical significance as it is the heat capacity ratio of a monatomic gas. Buckmaster et al.\ improve the result by expanding the range of permissible $\gamma$ to all $\gamma > 1$. They use a computer-assisted proof to handle the special case of $\gamma = 5/3$ and rigorously handle error bounds with interval arithmetic. Shao et al.\ \cite{shao2025blowup3dcompressiblenavierstokes} give a construction specifically designed to handle the degenerate case of $\gamma = 5/3$ for the purpose of constructing smooth implosions for the Navier-Stokes equations.

\par Implosions in a non-radial symmetric setting are discussed in \cite{caolabora2025nonradialimplosioncompressibleeuler} where the authors show the radially symmetric implosions of Merle et al.\ are stable for certain non-radial perturbations. A different approach of generalizing the result of Merle et al.\ is to ``lift'' the radially symmetric imploding profile to the axisymmetric Euler equations in higher dimensions leading to vorticity blowup, done for 2D in \cite{Chen_Shkoller_Vicol_2024} and 3D in \cite{Chen_vorticity_blowup}. In the setting of the Full Euler equations with spherical symmetry, Chen et al.\ construct smooth self-similar implosions \cite{vicol_implosion}. These implosions are fully non-isentropic as the density reaches infinity in finite time while the pressure may remain finite. The shocked solution emerging from the implosion is considered in their follow-up work \cite{vicol_explosion}.

\par Despite the recent developments on implosions for the isentropic Euler equations, some practical concerns remain. The imploding solutions presented in \cite{implosion_1} and built upon in further work are not explicitly constructed. Moreover, attempts to understand these simulations computationally are stymied by the inherent instability of the solutions \cite{biasi_compute}. Given the instability of these profiles, it is unclear how to interpret them physically and numerically. 

\subsubsection{Kidder}
\par Given these drawbacks, we instead turn to the class of self-similar imploding solutions inspired by the work of Sedov \cite{Sedov_book} and popularized in the seminal work of Kidder \cite{Kidder1974}. Sedov considered solutions to the Euler equations where the velocity is affine in the spatial variable. An example of an imploding solution to the Full Euler equations satisfying this principle with a Gaussian density profile can be found in \cite{Hora_gaussian}. A characterization of imploding profiles with the ansatz of Sedov is described in \cite{Li_2006}. A recent survey of these solutions is discussed in \cite{jenssen2026}.

\par Motivated by the application of ICF \cite{Kidder1974}, Kidder constructs an explicit self-similar imploding solution to the 3D, isentropic, radially symmetric Euler equations with $\gamma = 5/3$ by considering homogeneous isentropic compression - compression where each volume element is compressed identically. We note the concept of homogeneous compression is expressed mathematically as the linear velocity ansatz attributed to Sedov. Kidder expands his solution to the compression of a hollow shell in \cite{Kidder1976}. Both solutions have formed the cornerstone of self-similar imploding solutions in ICF modeling (see \cite{Meyer_book}). The Guderley and Kidder solutions can be considered under the same viewpoint, along with other classic self-similar solutions to the compressible Euler equations, by studying the underlying self-similar dynamical system as done in \cite{Meyer_1982}.

\par The downside of Kidder's solution, and more generally solutions with linear velocity, is the unboundedness of initial data or solution as the flow progresses. In the case of Kidder's solution, the initial density is shown to be radially unbounded. Moreover, the density approaches infinity almost everywhere in the domain. For solutions with linear velocity in general, the linear growth in velocity implies unbounded velocity in the far field. Both of these properties render the physical implementation of Kidder's solution impractical. 

\par Despite concerns about unboundedness, the explicit form and apparent numerical stability of the Kidder solution are desirable enough to try to find workarounds. One attempt at ameliorating the nonphysical initial data is to continuously cutoff the initial density by a constant at some finite spatial point. The nonphysical, radially unbounded initial density is replaced with the milder condition of enforcing a constant density outside a finite ball. This setting is similar to that of \cite{Sideris_1985} and \cite{Christodolou_irrotational}. The authors of \cite{Luong_1D} explore this solution in 1D and show a nonstandard rarefaction emerges from the cutoff point and suppresses the implosion. In this work, we consider the problem in higher dimensions.

\par This paper is organized as follows. After a brief introduction to the imploding solution of Sedov and Kidder, we form the initial data with a continuous cutoff in Section \ref{sec:cutoff}. With a region of influence argument, we show why an implosion is expected to occur. Other properties such as the exact scaling of the imploding point and the mass in the imploding region are derived. In Section \ref{sec:self_similar}, we propose the self-similar ansatzes describing the blowup near implosion time. By substituting the self-similar ansatzes into the potential flow equations, we derive the dynamical system in the self-similar variable and discuss some of its features. We also present numerical solutions to the dynamical system. A brief discussion of uniqueness of the constructed solution through the framework of Weak-Strong Uniqueness is presented in Section \ref{sec:unique}. The predictions presented in the previous sections are compared to numerical simulations. Section \ref{sec:numerics} describes the numerical procedures used and comparisons between numerical simulations and analysis. Finally, we conclude with remarks and potential for future work in Section \ref{sec:conclusion}.

\section{Cutoff Implosion Solution}\label{sec:cutoff}
In this section we introduce the \textit{Kidder} initial data and solution of \cite{Sedov_book} and \cite{Kidder1974} for general dimension. A derivation can be found in \cite{Luong_1D} and in \cite{Ramsey_linear_velocity} for the Full Euler Equations. From the Kidder initial data, we construct the \textit{cutoff implosion} initial data and discuss some of the properties of the cutoff implosion solution.

\subsection{Kidder Solution}
The \textit{linear velocity ansatz}, as introduced by Sedov in \cite{Sedov_book}, is defined as
\begin{equation}\label{eq:lin_vel}
    v(r,t) = r \frac{\dot{R}(t)}{R(t)}
\end{equation}
where $R(t)$ is the \textit{scale radius}. The scale radius is interpreted as the Lagrangian position of fluid particles. Kidder arrives at the linear velocity ansatz by considering the homogeneous compression of a gas \cite{Kidder1974}. We define the parameter $\eta$ as
\begin{equation}\label{eq:eta}
    \eta = (n+1)(1 - \gamma),
\end{equation}
recalling $n = d - 1$ where $d$ is the dimension. Note $\eta = -2$ corresponds to monatomic gases in various dimensions. One such pair is $n = 2$, $\gamma = 5/3$ corresponding to spherical geometry which is the case considered in \cite{Kidder1974}. For the rest of this paper we take $\eta = -2$, and thus 
\begin{equation}\label{eq:gamma}
    \gamma = \frac{n+3}{n+1}.
\end{equation}
By also selecting particular initial conditions for $R(t)$ and its derivatives, the scale radius is 
\begin{equation}\label{eq:scale_radius}
    R(t) = \left(1 - t^2\right)^{1/2},
\end{equation}
and the Lagrangian position of a particle is
\begin{equation}\label{eq:lagrangian}
    X(t,r_0) = r_0\left(1 - t^2\right)^{1/2}.
\end{equation}
With an explicit form of the scale radius, the density and velocity are
\begin{equation}\label{eq:original_imploding_solution}
\begin{split}
    \rho(r,t) &= \left( \frac{1}{n+3} \right)^{(n+1)/{2}} \left( \frac{r}{1-t^2} \right)^{n+1}, \\
    \quad v(r,t) &= -\frac{rt}{1 - t^2},
\end{split}
\end{equation}
with initial condition
\begin{equation}\label{eq:original_imploding_condition}
    \rho(r,0) = \left( \frac{1}{n+3} \right)^{(n+1)/{2}} r^{n+1}, \quad v(r,0) = 0,
\end{equation}
assuming $C = 1$ and the density at the origin vanishes. The Kidder solution achieves implosion at time $t =1$ with all particles arriving at the origin simultaneously. However, the initial density and the density and velocity for times $0<t<1$ are unbounded in the far field.
This issue is the main motivation for cutting off the initial condition in the far field and observing what happens.

\par For $n > 0$, the Kidder solution begins subsonic and becomes supersonic everywhere except the origin before imploding. By comparing the velocity to the speed of sound using \eqref{eq:original_imploding_solution}, the Kidder solution remains subsonic up until time
\begin{equation}\label{eq:sonic_time}
    t^* = \left( \frac{1}{n+1} \right)^{1/2}
\end{equation}
and remains supersonic afterwards. The time the solution becomes supersonic occurs is strictly less than $1$ for all $n > 0$.

\subsection{Cutoff Implosion Initial Data}
By continuously cutting off the initial data of \eqref{eq:original_imploding_condition} at some point $r^*$, we obtain the \textit{cutoff implosion} initial data
\begin{equation}\label{eq:cutoff_imploding_condition}
\begin{split}
    &\rho(r,0) = \begin{cases}
    \left( \frac{1}{n+3} \right)^{(n+1)/{2}} r^{n+1} &\text{ for } r \leq r^* \\
    \left( \frac{1}{n+3} \right)^{(n+1)/{2}} (r^*)^{n+1} &\text{ for } r \geq r^*
    \end{cases}, \\
    &v(r,0) = 0.
\end{split}
\end{equation}
Now, the initial condition for the density is bounded, unlike the Kidder initial condition. The constant far field condition acts as a barrier to prevent particles from the whole domain from being pulled to the origin. While the initial data is continuous, it is not differentiable, and as a result, we expect new behavior to emerge from the cutoff point $r^*$. The Lagrangian trajectories of particles originating from $r_0 < r^*$ are expected to head towards the origin, while the Lagrangian trajectories of particles originating from $r_0 > r^*$ remain stationary - creating a gap in the characteristic diagram as seen in Figure \ref{fig:characterestics}.

\begin{figure}[pos=h]
    \centering    
    \includegraphics[width=1\linewidth]{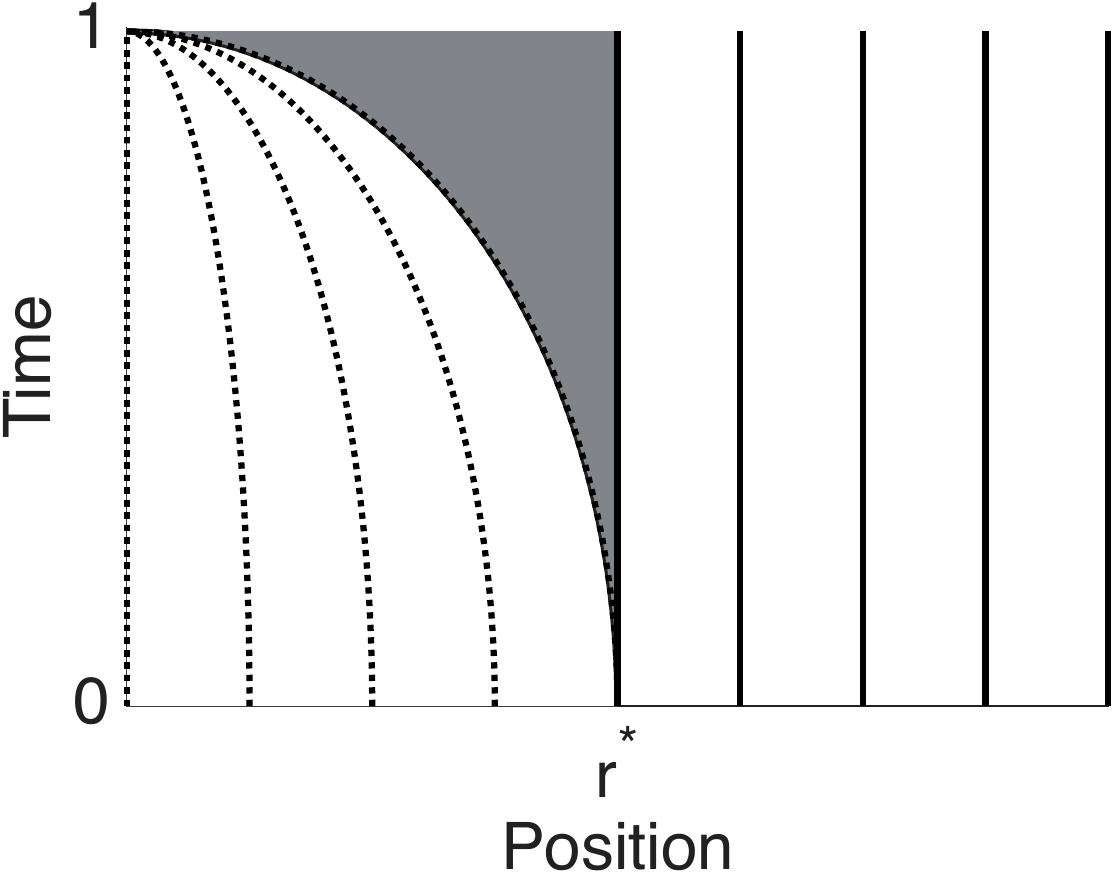}
    \caption{Predicted characteristics stemming from the cutoff implosion initial data. The characteristics left of the cutoff position $r^*$ (dashed lines) are expected to follow the Lagrangian position described in \eqref{eq:lagrangian}. The characteristics right of the cutoff position $x_0$ (solid lines) are expected to remain vertical since the initial velocity and density is constant in this region.}
    \label{fig:characterestics}
\end{figure}

The $1$-dimensional case is covered in \cite{Luong_1D}. The authors show a rarefaction emerges from the cutoff point with nonstandard rarefaction variable $y = x/(t+1)$. The left endpoint of the rarefaction region arrives at the origin at time $t = 1$, which is the same as the original implosion time. However, an implosion does not occur as both the density and velocity remain bounded near the origin. In this work, we consider the case for all higher dimensions - in other words, when $n > 0$. 

\par Analysis in higher dimensions is complicated by the presence of the source term $n\rho v/r$ in \eqref{eq:isen_mass}. We do not expect a traditional rarefaction to emerge from the cutoff point unlike in $1$D. Asymptotic expansions, as presented in \cite{Floch_part1,Floch_part2}, can be used to approximate the solution for small times. The rarefaction solution of the Riemann problem without the source term is used as the leading order term. As a result, the rarefaction solution can be used as an approximate solution for the equations with source term for small time, a feature leveraged by numerical methods based on the asymptotic expansions described in \cite{Ben_artzi_1984,Toro_book}. Global asymptotic expansions exist when the source term depends only on the state \cite{Floch_global}. However, we are interested in the behavior of the solution for larger times, and our source term depends on the spatial variable.

\subsection{Region of Influence of the Cutoff Position}
\par A ``rarefaction'' emerges over a region influenced by the cutoff point $r^*$. In higher dimensions, this region is a spherical shell (annulus in 2D). The outer and inner radii of this shell travel at the speeds of the eigenvalues of the isentropic Euler equations, $\lambda_{\pm} = v \pm c$ respectively where $c$ is given by \eqref{eq:sos}. As a nonlinear system, the eigenvalues are not constant but depend on the local state of the material. Since the eigenvalues represent the speed of the radii, the inner radius $r_{l}$ and outer radius $r_r$ satisfy the ODEs
\begin{equation}\label{eq:endpoint_ODE}
    \dot{r_l} = \lambda_{-}(r_l), \quad \dot{r_r} = \lambda_{+}(r_r), \quad r_l(0) = r_r(0) = r^*.
\end{equation}
Due to the diverging structure of the characteristics from $r^*$, we expect the solution to be continuous. This implies the solution should connect continuously to the Kidder solution at the inner radius and to the constant state at the outer radius. Both of these states are explicitly known. Note the speed of sound $c$ from \eqref{eq:sos} can be written in terms of $n$ given the assumed relationship between $\gamma$ and $n$ used to derive the Kidder solution:
\begin{equation}\label{eq:sos_special}
    c = \left( \frac{n+3}{n+1} \right)^{1/2}\rho^{1/(n+1)}.
\end{equation}
The outer radius $r_r(t)$ satisfies the ODE
\begin{equation}\label{eq:r_r_ODE}
    \dot{r}_r = \frac{r^*}{\sqrt{n+1}}, \quad r_r(0) = r^*,
\end{equation}
which has the solution
\begin{equation}\label{eq:r_r}
    r_r(t) = r^*\left(\frac{t}{\sqrt{n+1}} + 1\right).
\end{equation}
The inner radius $r_l(t)$ satisfies the more complicated ODE
\begin{equation}\label{eq:r_l_ODE}
    \dot{r}_l = -r_l \left( \frac{t + (n+1)^{-\frac{1}{2}}}{1 - t^2} \right), \quad r_l(0) = r^*
\end{equation}
which has solution
\begin{equation}\label{eq:r_l}
    r_l(t) = r^*\left( (1 - t)^{\frac{1+(n+1)^{-1/{2}}}{2}} (1+t)^{\frac{1 - (n+1)^{-1/{2}}}{2}} \right).
\end{equation}
Both formulas \eqref{eq:r_l} and \eqref{eq:r_r} reduce to the left and right endpoint formulas derived in \cite{Luong_1D} when $n = 0$. The outer radius moves linearly, regardless of $n$. On the other hand, the inner radius always arrives at the origin at time $t = 1$, also regardless of $n$. 

\par As $r_l(t)$ descends to the origin, every $v$ characteristic from $(0, r^*)$ will eventually cross the $v - c$ characteristic stemming from $r^*$. 
Suppose a Lagrangian particle initially starts at $r_0 \in (0,r^*)$. By equating its Lagrangian position ascribed in \eqref{eq:lagrangian} with the formula for the inner radius in \eqref{eq:r_l}, the two paths collide at time
\begin{equation}\label{eq:time_cross}
    t = \frac{1 - \tau}{1 + \tau}, \quad  \tau = \left( \frac{r_0}{r^*} \right)^{2\sqrt{n+1}}
\end{equation}
with $t \in (0,1)$ since $\tau < 1$. So, every particle originating on $(0,r^*)$ is eventually affected by the left moving characteristic emanating from $r^*$.

\par We can track the density and velocity at the inner radius by evaluating the Kidder solution at $r_l(t)$. From the construction of $r_l(t)$, the cutoff implosion solution is the same as the Kidder solution on the interval $[0,r_l(t)]$. The density and velocity of the cutoff implosion solution evaluated at $r_l(t)$ is
\begin{equation}\label{eq:rho_r_l}
\begin{split}
    \rho(r_l(t),t) &= \left( n + 3 \right)^{-(n+1)/{2}} (r^*)^{n+1} (1-t)^{\frac{(n+1)^{1/{2}} - (n+1)}{2}} \\
    &\times (1+t)^{\frac{-(n+1)^{1/2} - (n+1)}{2}}
\end{split}
\end{equation}
and
\begin{equation}\label{eq:v_r_l}
    v(r_l(t),t) = -r^*t (1-t)^{{\frac{-1+(n+1)^{-1/{2}}}{2}}}(1+t)^{{\frac{-1-(n+1)^{-1/{2}}}{2}}}.
\end{equation}
Whether an implosion can occur is determined by the exponent of the $(1-t)$ term. When $n > 0$, the exponent is negative implying the density at $r_l(t)$ goes to positive infinity and the velocity at $r_l(t)$ goes to negative infinity as $t$ approaches $1$. Note $n = 0$ is a special case, as the $(1-t)$ term drops out entirely leading to finite limits for the density and velocity instead. This behavior matches the limiting behavior observed in \cite{Luong_1D}. So, implosion occurs in all dimensions higher than $1$. 

\par Note $\rho(r_l(t),t)$ is not a monotone increasing function in $t$ on $[0,1)$ while $v(r_l(t),t)$ is a monotone decreasing function. Although $\rho(r_l(t),t)$ eventually goes to infinity as $t \to 1$, there will be a time interval when the density decreases instead. In contrast, $v(r_l(t),t)$ will always decrease to negative infinity. Since on the interval $[0,r_l(t)]$ the velocity is the same as the Kidder solution and $v(r,t) = 0$ for $r \geq r_r(t)$, we expect $v(r_l(t),t)$ to be a global minima for velocity at any given time.

\section{Self-Similar Blowup Limit}\label{sec:self_similar}
In our analysis thus far, we have concluded an implosion occurs by tracking the state of the inner radius $r_l(t)$. However, such analysis does not give insight on the structure of the solution in a neighborhood around $r_l(t)$. While inwards from $r_l(t)$ lies the Kidder solution, it is not clear how the solution outwards from $r_l(t)$ behaves. 


\par
In the 1D problem we found an exact similarity in the "rarefaction" domain.  Our numerical solutions show that this does not happen in higher dimensions. Suppose to the contrary there exists an exact similarity solution with density $\tilde{\rho}$ with similarity variable $y$ on the interval $[r_l(t),r_r(t)]$ for $0 < t < 1$. Since $r_l(t)$ is monotone decreasing and $r_r(t)$ is monotone increasing, the interval $[r_l(t_1), r_r(t_1)]$ is contained in the interval $[r_l(t_2), r_r(t_2)]$ when $0 < t_1 < t_2 < 1$. If the exact similarity solution $\tilde{\rho}$ is continuous, then $\tilde{\rho}\left([r_l(t_1), r_r(t_1)]\right) \subset \tilde{\rho}\left([r_l(t_2), r_r(t_2)]\right)$. However, in Figure \ref{fig:min_loss} there exist values in the range of $\rho$ on $[r_l(t),r_r(t)]$ that appear in earlier times but not later times. In Subfigure \ref{subfig:min_loss_1}, the minimum of $\rho$ on $[r_l(t_1),r_r(t_1)]$ is less than $1$. But, in Subfigure \ref{subfig:min_loss_2}, the minimum of $\rho$ on $[r_l(t_2),r_r(t_2)]$ is greater than $1$. This contradicts the property that the image of $\tilde{\rho}$ on a subinterval should be contained in the image of $\tilde{\rho}$ on a a larger interval. This suggests that the ensuing dynamics is more complicated.

\begin{figure*}[pos=h]
     \centering
     \begin{subfigure}[h]{0.495\linewidth}
         \centering
         \includegraphics[width=\textwidth]{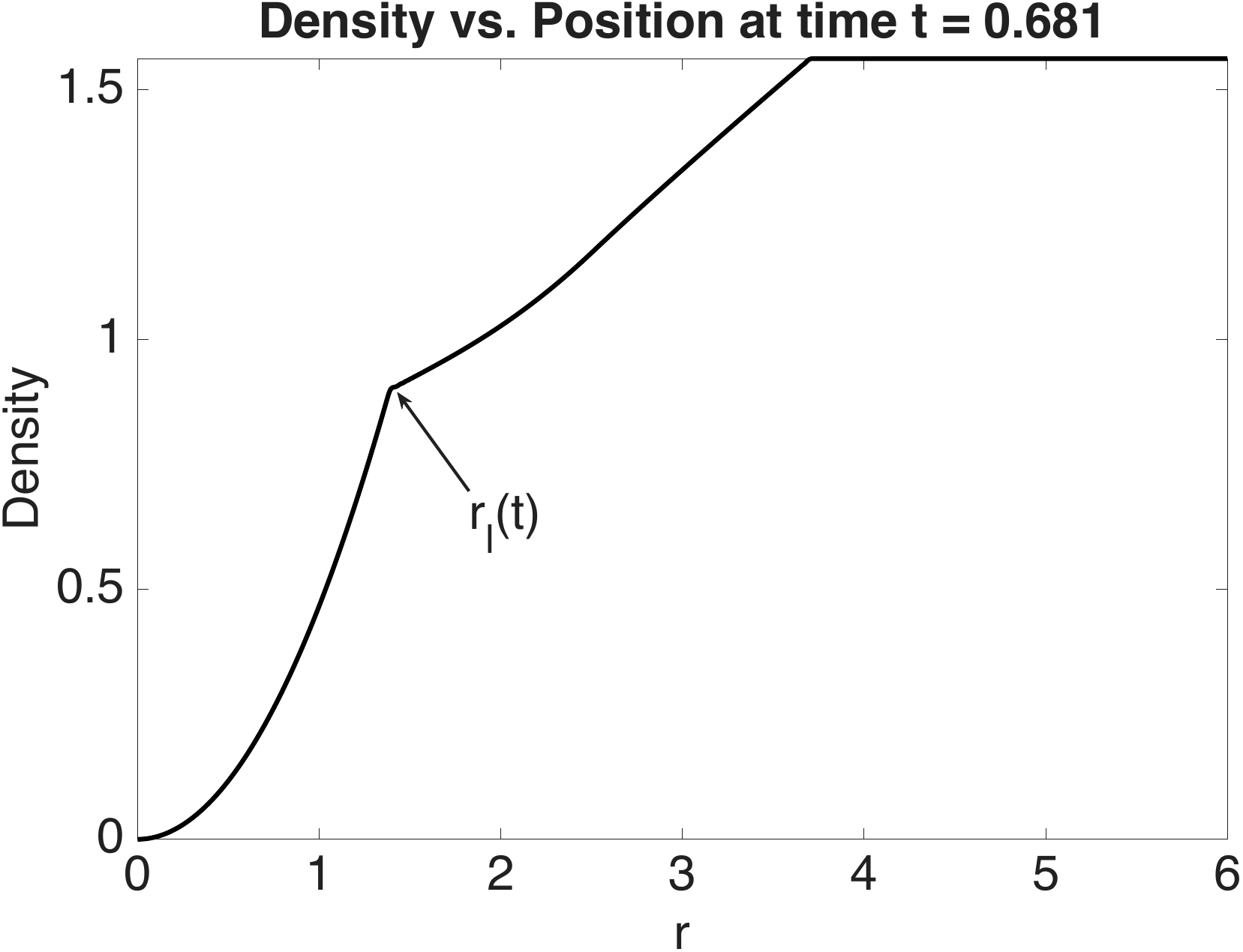}
         \caption{Density at an earlier time $t_1$ where $\min_{r \in [r_l(t_1),r_r(t_1)]}\rho(r,t_1) < 1$. }
         \label{subfig:min_loss_1}
     \end{subfigure}
     \hfill
     \begin{subfigure}[h]{0.495\linewidth}
         \centering
         \includegraphics[width=\textwidth]{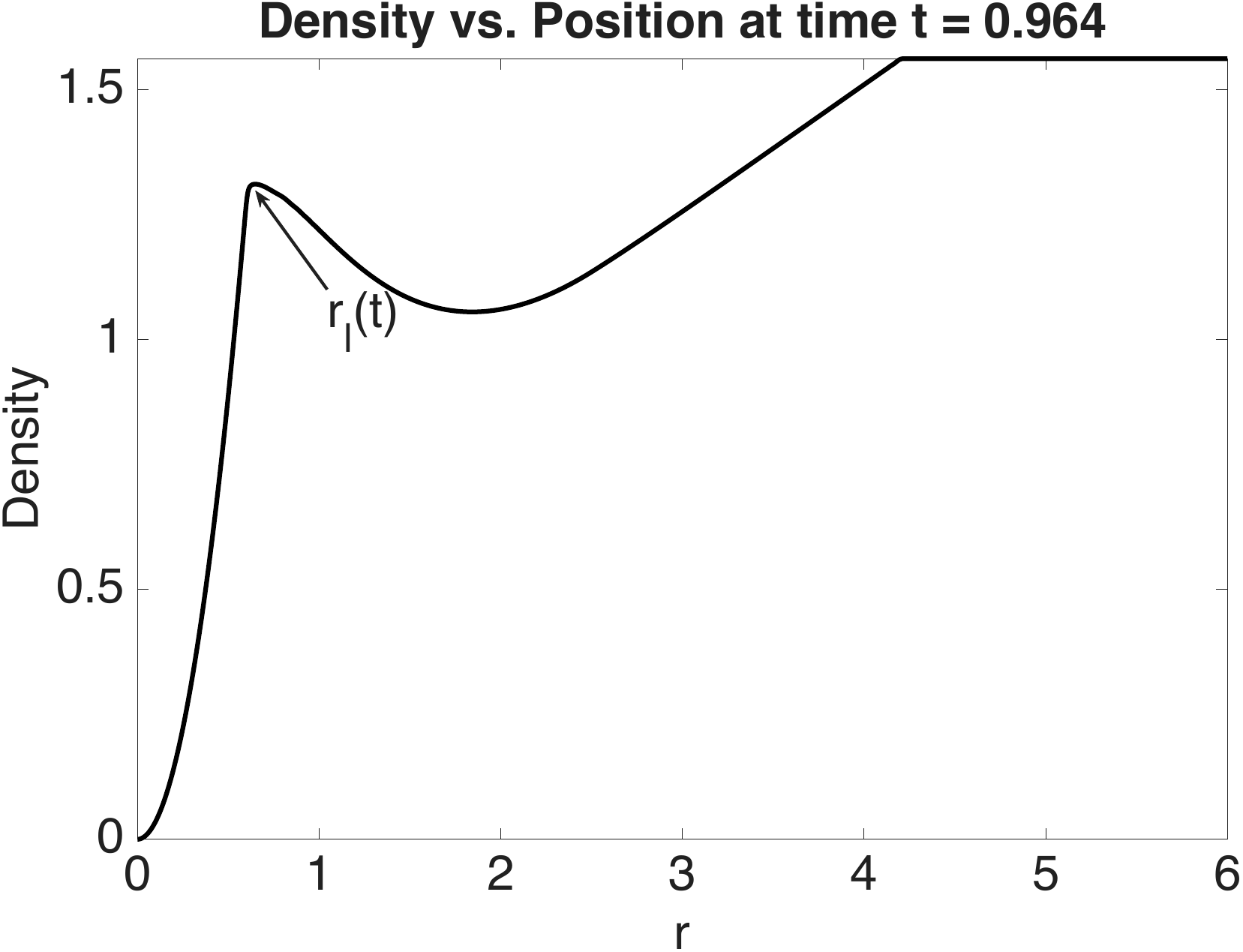}
         \caption{Density at a later time where $\min_{r \in [r_l(t_2),r_r(t_2)]}\rho(r,t_2) > 1$.}
         \label{subfig:min_loss_2}
     \end{subfigure}
        \caption{Snapshots of solutions to equations \eqref{eq:isen_mass} and \eqref{eq:isen_mom} from the cutoff implosion initial data \eqref{eq:cutoff_imploding_condition} with parameters $n = 1$ and $r^* = 2.5$. The solution on the interval $[r_l(t),r_r(t)]$ cannot be an exact similarity solution since the image of the density on a subinterval must be contained in the image of any larger interval.}
        \label{fig:min_loss}
\end{figure*}

\par Given the blowup of the solution at $r_l(t)$, we search for a \textit{self-similar solution of the first kind} that appears as the asymptotic limit of the cutoff implosion solution. For this type of scaling, the similarity variable and exponents are determined from dimensional analysis and invariant scalings of the problem \cite{Barenblatt_self_similar}. To derive the self-similar solution, we first state the functional form in general and derive the corresponding system of ODEs. The system of ODEs can be further simplified using knowledge of the density and velocity at the inner radius derived in the previous section. Suppose we have the similarity variable
$y = r/{r_l(t)}$ and self-similar forms for the density and velocity
\begin{equation}\label{eq:ss_ansatz}
    \rho(y,t) = \tau(t) \rhohat(y), \quad v(y,t) = -\tau(t)^{\beta}\vhat(y)
\end{equation}
where $\tau(t)$ is a function of time that goes to infinity as $t \to 1$ and $\beta$ is a to be determined exponent. The motivation behind considering the similarity variable $y$ is the time dependent length scale, $r_l(t)$, is derived from the left moving characteristic at $r^*$ at time $t = 0$ and acts as a principal length scale in the implosion. Additionally, $r_l(t)$ is asymptotically equivalent to the time scaling of the Kidder solution similarity variable close to implosion time. 

\par The Kidder solution is invariant under this transformation, so it can be rewritten as 
\begin{equation}\label{eq:og_ss}
\begin{split}
    \rho(y,t) &= \tau(t)\rhohat(y), \quad \rhohat(y) = \rhohat(1)y^{n+1}, \\
    v(y,t) &= -\tau(t)^{\beta}\vhat(y), \quad \vhat(y) = \vhat(1)y, \\
    \tau(t) &= \left( \frac{r_l}{1 - t} \right)^{n+1} .
\end{split}
\end{equation}
Substituting the ansatzes in \eqref{eq:ss_ansatz} into the potential flow equations \eqref{eq:isen_mass} and \eqref{eq:isen_mom} yields
\begin{equation}\label{eq:ss_system_dirty}
    \begin{cases}
\tau_t \hat{\rho} - \tau \frac{y}{r_l}\dot{r_l}\hat{\rho}' - \frac{\tau^{\beta+1}}{r_l}\hat{\rho}\hat{v}' - \frac{\tau^{\beta+1}}{r_l}\hat{v}\hat{\rho}' - \frac{n\tau^{\beta + 1}\hat{\rho}\hat{v}}{y r_l} &= 0, \\
-\beta \tau_t \tau^{\beta - 1}\hat{v} + \tau^{\beta} \frac{y}{r_l}\dot{r_l}\hat{v}' + \frac{\tau^{2\beta}}{r_l}\hat{v}'\hat{v} + \gamma \frac{\tau^{\gamma - 1}}{r_l} \hat{\rho}'\hat{\rho}^{\gamma - 2} &= 0.
\end{cases}
\end{equation}

\subsection{\textit{A Priori} Reductions}
To further simplify the ODE system in \eqref{eq:ss_system_dirty}, we use the knowledge of the implosion dynamics to explicitly determine the unknown terms. In doing so, we consider the implosion dynamics in the limit to implosion time and evaluate time-dependent, non $(1-t)$ terms at $t = 1$. This is justified since we wish to find the self-similar profile the solution to the PDE asymptotically approaches when $t$ is very close to $t = 1$. The resulting solution is a genuine self-similar solution with respect to the new variables and serves as an asymptotic limit.

\subsubsection{Derivation of $\tau,\beta,\rhohat(1),\vhat(1)$}
From the definition of $y$, the spatial point $r = r_l(t)$ maps to $y = 1$. The exponent of $(1-t)$ in $r_l(t)$ can be expressed as $\frac{1}{s^*}$ where $s^* = \frac{2}{1 + (n+1)^{-1/2}}$. Evaluating \eqref{eq:og_ss} at $y = 1$,
\begin{equation*}
    \begin{split}
        \tau(t)\rhohat(1) &= (1-t)^{\frac{(n+1)^{{1}/{2}} - (n+1)}{2}}  \left( n + 3 \right)^{-\frac{n+1}{2}} k^{n+1},
    \end{split}
\end{equation*}
where all of the non $(1-t)$ terms in \eqref{eq:rho_r_l} are evaluated at $t = 1$ and 
\begin{equation}\label{eq:k}
k = r^*2^{{\frac{-1-(n+1)^{-1/{2}}}{2}}}.
\end{equation}
A similar derivation in $v$ yields
\begin{equation*}
    -\tau(t)^{\beta}\vhat(1) = -(1-t)^{{\frac{-1+(n+1)^{-1/{2}}}{2}}} k.
\end{equation*}
Thus, $\tau(t)$ encodes the implosion dynamics and is
\begin{equation}\label{eq:tau}
\tau(t) = (1-t)^{\frac{(n+1)^{1/{2}} - (n+1)}{2}} \sim r_l(t)^{-(n+1)(s^* -1)}
\end{equation}
with $\beta = (n+1)^{-1}$.
The initial condition at $y = 1$ for $\rhohat$ and $\vhat$ is also identified as
\begin{equation}\label{eq:ss_ic}
\begin{split}
    \rhohat(1) = \left( n + 3 \right)^{-(n+1)/{2}} k^{n+1}  , \quad
    \vhat(1) = k.
\end{split}
\end{equation}

\subsubsection{Relation of $\dot{r_l}(t)$ to $\tau(t)$}
Recall $r_l(t)$ satisfies the ODE \eqref{eq:endpoint_ODE}. Substituting the ansatzes \eqref{eq:ss_ansatz} into ODE \eqref{eq:endpoint_ODE}, we obtain
\begin{equation}\label{eq:r_l_tau_relation}
    \dot{r}_l(t) = -\tau^{\beta} \left( \hat{v}(1) + \left( \frac{n+3}{n+1} \right)^{\frac{1}{2}}\hat{\rho}(1)^{\frac{1}{n+1}} \right).
\end{equation}
Equivalently, $\dot{r}_l(t) = -A\tau^{\beta}$, where $A$ is
\begin{equation}\label{eq:A}
    A = k \left( 1 + (n+1)^{-\frac{1}{2}} \right)
\end{equation}
after substituting in the initial conditions \eqref{eq:ss_ic}. Note $A$ is a positive constant.

\subsubsection{Relation of $\tau_t(t)$ to $\tau^{\beta + 1}(t) / r_l(t)$}
By using the explicit forms of $\tau(t)$ and $r_l(t)$, we see $\tau(t)$ satisfies the ODE
\begin{equation}\label{eq:r_l_tau_ODE}
    \tau_t(t) = \lambda \frac{\tau^{\beta + 1}(t)}{r_l(t)},
\end{equation}
where $\lambda$ is the constant
\begin{equation}\label{eq:lambda}
    \lambda = k\left((n+1) - (n+1)^{1/2}  \right).
\end{equation}
Note $\lambda$ is a positive constant.

\subsection{Autonomous Transformation}
By using the \textit{a priori} knowledge collected in the previous subsections, the non-autonomous ODE system \eqref{eq:ss_system_dirty} simplifies to
\begin{equation}\label{eq:ss_system}
\begin{split}
\begin{bmatrix}
Ay - \hat{v} & -\hat{\rho} \\
\frac{n+3}{n+1} \hat{\rho}^{\frac{1-n}{1+n}} & \hat{v} - A y
\end{bmatrix}\begin{bmatrix}
\hat{\rho}' \\ \hat{v}'
\end{bmatrix} = \begin{bmatrix}
\frac{n\hat{\rho}\hat{v}}{y} - \lambda \hat{\rho} \\
\beta \lambda\hat{v}
\end{bmatrix}
\end{split}.
\end{equation}

As done by Guderley \cite{Guderley1942} (see \cite{Meyer_1982,implosion_1,Sedov_book}  for more details), consider the Emden transformation
\begin{equation}
\begin{split}
    \hat{\rho}(y) &= \left( \frac{n+1}{n+3} \right)^{\frac{(n+1)}{2}}y^{n+1}\sigma(x)^{n+1}, \\
    \hat{v}(y) &= y w(x), \quad y = e^x.
\end{split}
\end{equation}
In this change of variables, $\sigma$ is related to the speed of sound. Moreover, the Kidder solution, as written in the self-similar form \eqref{eq:og_ss}, is mapped by the Emden transformation to the constant solution \begin{equation}\label{eq:ss_auto_ic}
    \sigma(0) = (n+1)^{-1/{2}} k, \quad w(0) = k, \quad P^* = \begin{bmatrix}
        \sigma(0) & w(0)
    \end{bmatrix}^T,
\end{equation}
where $P^*$ is referred to as a \textit{sonic point}. The initial condition of the non-autonomous system \eqref{eq:ss_ic} is also mapped to \eqref{eq:ss_auto_ic} under the Emden transformation. The full system \eqref{eq:ss_system} becomes
\begin{equation}\label{eq:ss_system_auto}
\begin{split}
\begin{bmatrix}
    (A - w) & - \frac{1}{n+1}\sigma \\ (n+1)\sigma & (w - A) 
\end{bmatrix}\begin{bmatrix}
    \sigma' \\ w'
\end{bmatrix} = \\
\begin{bmatrix}
    -\frac{\lambda\sigma}{n+1} - A\sigma +  2w\sigma \\\beta\lambda w + Aw - w^2 - (n+1) \sigma^2 
\end{bmatrix},
\end{split}
\end{equation}
and can be formally written as
\begin{equation}\label{eq:ss_system_auto_informal}
\begin{split}
\begin{bmatrix}
\sigma' \\ w'
\end{bmatrix} &= \left(\sigma^2 - (A - w)^2\right)^{-1}\begin{bmatrix}
    (w - A) & \frac{1}{n+1}\sigma \\ -(n+1)\sigma & (A - w) 
\end{bmatrix}\times \\
&\begin{bmatrix}
    \frac{-\lambda\sigma}{n+1} - A\sigma +  2w\sigma \\ \beta\lambda w + Aw - w^2 - (n+1) \sigma^2 
\end{bmatrix}.
\end{split}
\end{equation}
System \eqref{eq:ss_system_auto_informal} is recognized as the celebrated autonomous self-similar system featured in \cite{Guderley1942} and \cite{implosion_1} with our choice of parameters. The zero level set of the denominator $\Delta = \left(\sigma^2 - (A - w)^2\right)$ is the \textit{sonic line} and is the determinant of the matrix $M(\sigma,w)$ of the left hand side of \eqref{eq:ss_system_auto}. The initial condition \eqref{eq:ss_auto_ic} lies on the sonic line, implying singularity of system \eqref{eq:ss_system_auto_informal}. 

\subsection{Solutions of the Autonomous System}
Motivated by the numerical simulations presented in Section \ref{sec:numerics}, we search for a self-similar profile that attaches to the Kidder solution and decays to zero for large values of $y = e^x$. This subsection presents an explanation on how additional solutions are constructed.
Denote $\vec{b}(\sigma,w)$ as the right hand side vector of \eqref{eq:ss_system_auto} and take $M(\sigma,w)$ as the mass matrix defined earlier. Noting the anti-involutive structure of $M(\sigma,w)$ for any choice of $(\sigma,w)$ and the non-invertibility of $M(\sigma(0),w(0))$, we discuss the eigenvalues and eigenvectors of $M(\sigma(0),w(0))$.
\begin{lemma}\label{lem:eigen}
    Suppose a $2 \times 2$ real matrix $N$ has the entries
    \[
    N = \begin{bmatrix}
        a & b \\ c & -a
    \end{bmatrix}
    \]
    and $\det{N} = 0$. Then, $N$ has eigenvalue $\lambda = 0$ with only one associated eigenvector $\vec{v} = \begin{bmatrix}
        a & c
    \end{bmatrix}^T$.
\end{lemma}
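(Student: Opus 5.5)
The plan is to compute everything directly from the characteristic polynomial and then check the proposed kernel vector by hand. The argument is essentially algebraic; the only real subtlety is a nondegeneracy condition that the statement leaves implicit.

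\emph{Eigenvalues.} Since $\operatorname{tr} N = a + (-a) = 0$ and $\det N = -a^2 - bc = 0$ by hypothesis, the characteristic polynomial is
\[
p(\mu) = \mu^2 - (\operatorname{tr} N)\mu + \det N = \mu^2 .
\]
Hence $\mu = 0$ is the only eigenvalue, with algebraic multiplicity two. This is the anti-involutive structure noted before the lemma: $N^2 = (a^2+bc)I = -(\det N)I = 0$.

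\emph{The eigenvector.} I would check directly that $\vec{v} = [a \;\; c]^T$ lies in the kernel:
\[
N\vec{v} = \begin{bmatrix} a^2 + bc \\ ca - ac \end{bmatrix} = \begin{bmatrix} -\det N \\ 0 \end{bmatrix} = \vec{0}.
\]

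\emph{Uniqueness of the eigenvector.} To show it is the only eigenvector up to scaling, I would argue that the kernel has dimension one. If $N \neq 0$, then $\operatorname{rank} N \geq 1$, and $\det N = 0$ gives $\operatorname{rank} N = 1$. Rank–nullity then gives a one-dimensional kernel. So the eigenvalue $0$ has geometric multiplicity one, and $N$ is a nonzero nilpotent Jordan block.

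\emph{The main obstacle: degenerate cases.} The statement is only true under a nondegeneracy assumption, and this is the step needing care.
\begin{itemize}
\item If $N = 0$, every nonzero vector is an eigenvector, so uniqueness fails.
\item If $a = c = 0$ but $b \neq 0$, then $[a \;\; c]^T$ is the zero vector and is not an eigenvector. The correct kernel vector in that case is $[b \;\; -a]^T = [b \;\; 0]^T$. In general $[b \;\; -a]^T$ is parallel to $[a \;\; c]^T$ whenever both are nonzero, because $a^2 + bc = 0$.
\end{itemize}
I would therefore state the lemma under the assumption $(a,c) \neq (0,0)$, which also forces $N \neq 0$.

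This assumption holds in the intended application. For $N = M(\sigma(0), w(0))$ from \eqref{eq:ss_system_auto}, the lower-left entry is $c = (n+1)\sigma(0) = (n+1)^{1/2}k > 0$ by \eqref{eq:ss_auto_ic}. So the eigenvector $\bigl[A - w(0) \;\; (n+1)\sigma(0)\bigr]^T$ is nonzero and spans the kernel.
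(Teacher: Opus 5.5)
Your argument is correct and follows the same route as the paper: the characteristic polynomial reduces to $\lambda^2$ because $a^2+bc=0$, and a direct computation shows that $[a \;\; c]^T$ spans the kernel, with your rank--nullity step making the paper's bare claim of ``only one eigenvector'' precise. Your caveat is also right: as literally stated, the lemma fails when $a=c=0$ (for example $N=0$, or $N$ with only $b\neq 0$), a case the paper's proof silently ignores, but this is harmless in the application since $c=(n+1)\sigma(0)=(n+1)^{1/2}k>0$.
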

\begin{pf}
    Suppose $N$ has $\det{N} = 0$, so $a^2 + bc = 0$. The eigenvalues $\lambda$ of $N$ satisfy the characteristic equation
    \[
    \lambda^2 - a^2 - bc = 0,
    \]
    but given $\det{N} = 0$, the only eigenvalue is $\lambda = 0$. Computing the eigenvectors for $\lambda = 0$ shows there is only one eigenvector, $\vec{v} = \begin{bmatrix}
        a & c
    \end{bmatrix}^T$.
\end{pf}
Since $M(\sigma(0),w(0))$ satisfies the conditions in Lemma \ref{lem:eigen}, $M(\sigma(0),w(0))$ has only an eigenvalue at $\lambda = 0$ and only one corresponding eigenvector. When considering \eqref{eq:ss_system_auto} as a matrix equation evaluated at \eqref{eq:ss_auto_ic}, the vector on the right hand side $\vec{b}(\sigma(0),w(0))$ is the zero vector. Thus, valid initial slopes $\begin{bmatrix}
    \sigma'(0) & w'(0)
\end{bmatrix}^T$ are rescaled eigenvectors of $M(\sigma(0),w(0))$, which take the form
\begin{equation}\label{eq:ss_auto_slope}
    \begin{bmatrix}
    \sigma'(0) \\ w'(0)
\end{bmatrix} = \alpha \begin{bmatrix}
A - w(0) \\ (n+1) \sigma(0)
\end{bmatrix} =\alpha \begin{bmatrix}
k(n+1)^{-1/2} \\ k(n+1) ^{1/2}
\end{bmatrix},
\end{equation}
where $\alpha$ is a scalar and the expression for the eigenvector is taken from Lemma \ref{lem:eigen}.
\par We have thus established the possibility of finding a nontrivial solution to \eqref{eq:ss_system_auto} with initial condition \eqref{eq:ss_auto_ic} and its initial slope. We search for solutions with $\alpha < 0$ that describe a heteroclinic orbit from the sonic point to the origin. The phase portrait of \eqref{eq:ss_system_auto} is presented in in Fig 4 and such a heteroclinic orbit is colored in green. For smooth and bounded autonomous systems, a heteroclinic orbit would start at $x=-\infty$ and end at $x=+\infty$.  The singularity in the ODE at the sonic point allows for solutions that start at finite $x$ and flow to zero in as $x\to+\infty$.

\subsection{Phase Portraits}

\begin{figure*}[pos=h]
        \centering
     \begin{subfigure}[h]{0.70\linewidth}
         \centering
         \includegraphics[width=\textwidth]{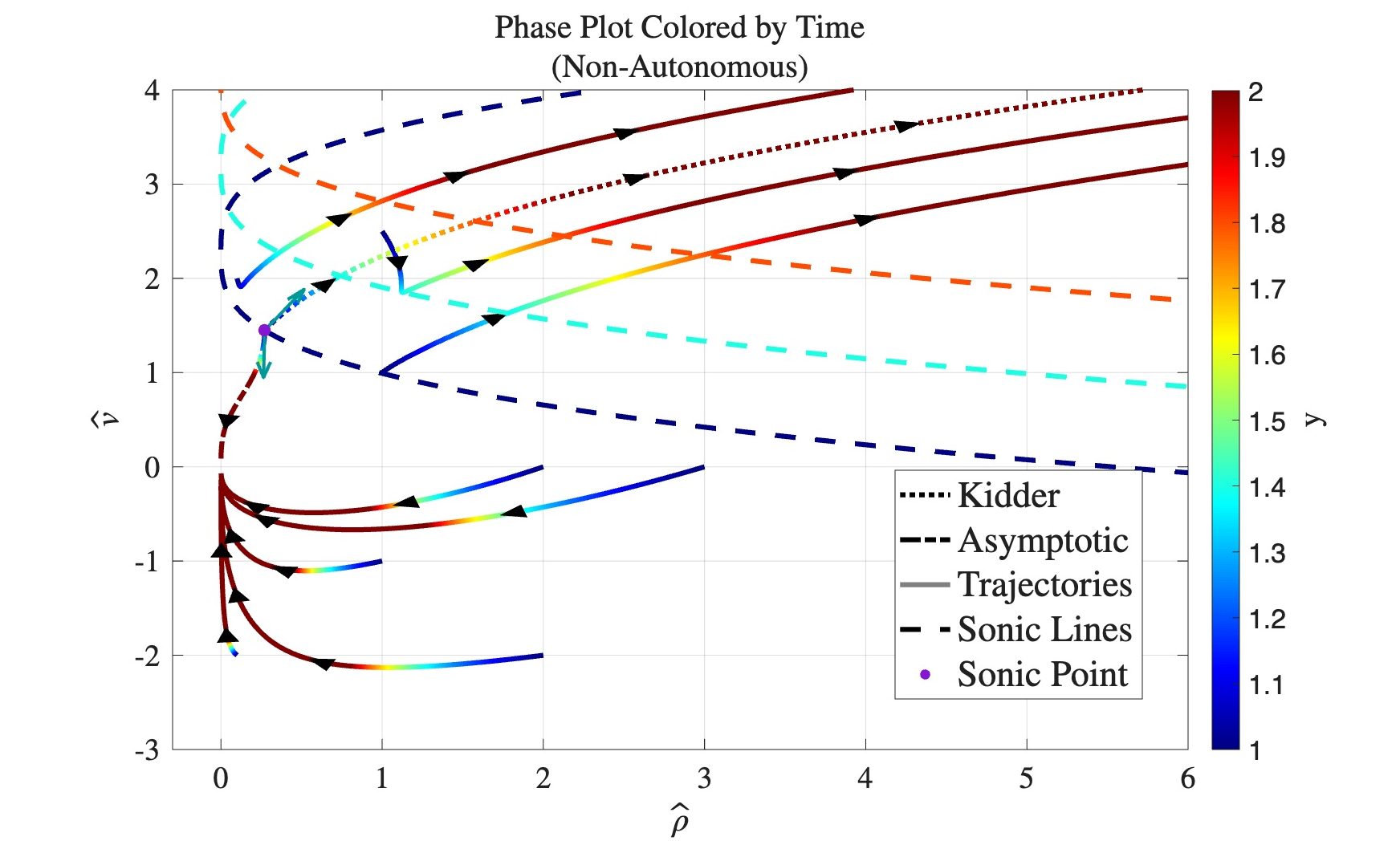}
         \caption{Non-autonomous phase plot}
         \label{subfig:pp_auto_zoom_out}
     \end{subfigure}
     \begin{subfigure}[h]{0.28\linewidth}
         \centering
         \includegraphics[width=\textwidth]{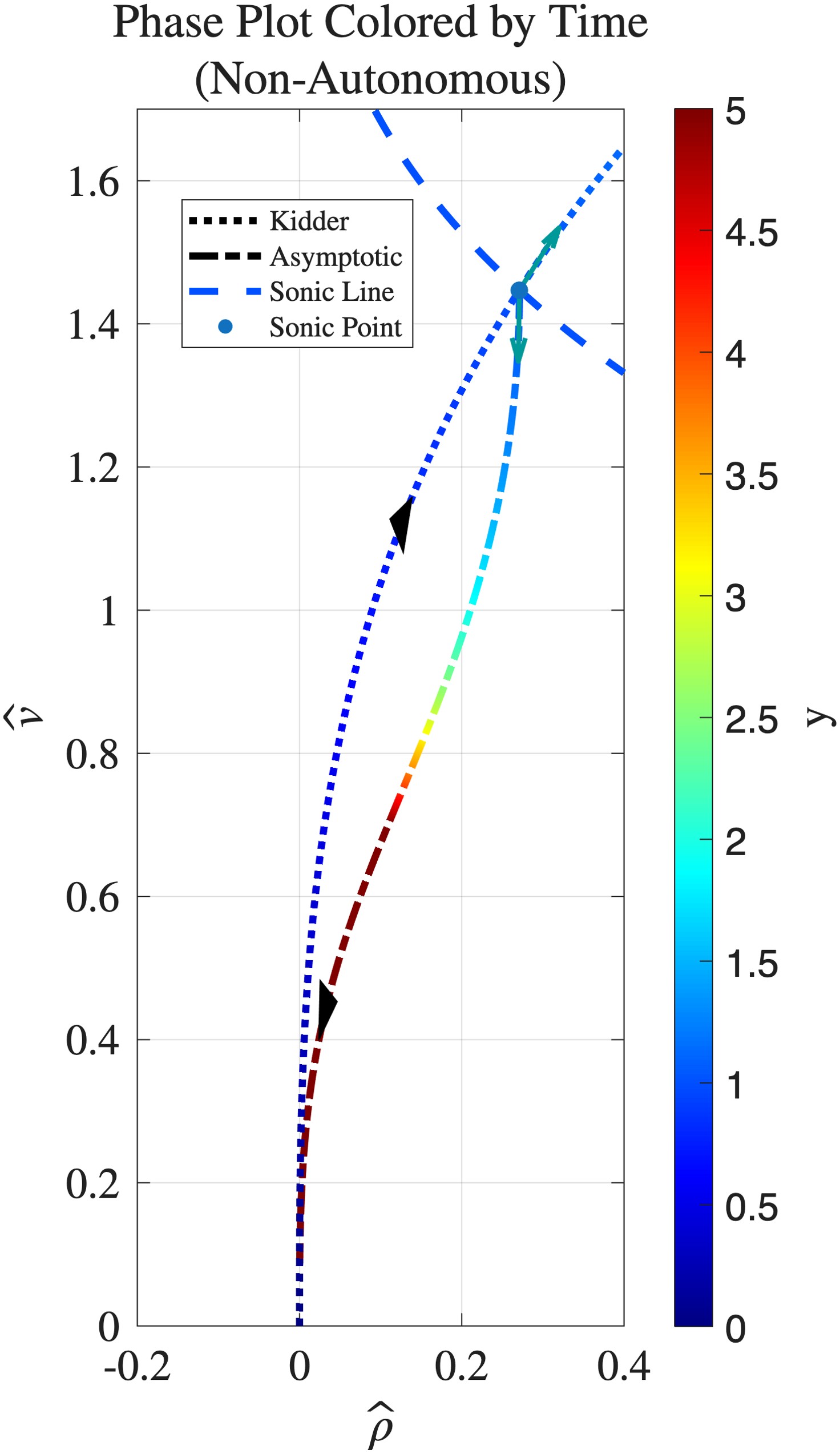}
         \caption{Composite Kidder and Asymptotic solution}
         \label{subfig:pp_auto_zoom}
     \end{subfigure}
    \caption{Phase portrait of non-autonomous system \eqref{eq:ss_system} for $n = 2$ and $r^* = 2.5$ in \ref{subfig:pp_auto_zoom_out} for $1 \leq y \leq 2$. The sonic line for various $y$ are displayed as dashed lines. Vectors indicating permissible initial slopes at the sonic point are drawn. The composite Kidder and asymptotic solution is represented in phase space for $0 \leq y \leq 5$ in \ref{subfig:pp_auto_zoom_out}.}
    \label{fig:pp_non_auto}
\end{figure*}

\begin{figure*}[pos=h]
    \centering
    \includegraphics[width=0.95\textwidth]{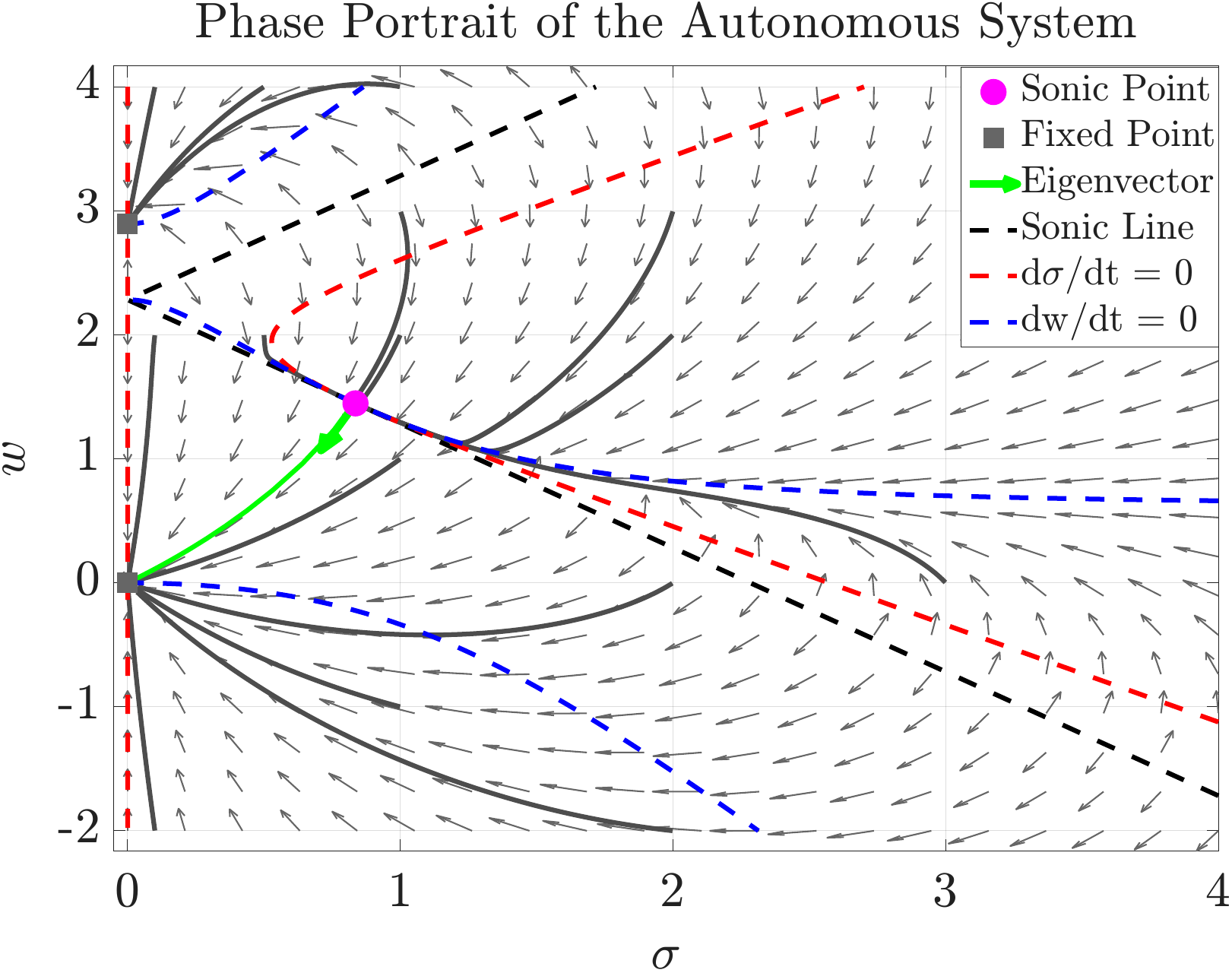}
    \caption{Phase portrait of autonomous system \eqref{eq:ss_system_auto} for $n = 2$ and $r^* = 2.5$. The fixed points, nullclines, and sonic line are displayed. At the sonic point, the eigenvector \eqref{eq:ss_auto_slope} is shown indicating the only valid direction to leave the sonic point in the direction of the origin. The green curve represents the limiting self-similar profile to the right of $r_l(t)$ for the cutoff implosion solution near implosion time.}
    \label{fig:pp_auto}
\end{figure*}

We present phase portraits for the non-autonomous system and autonomous system in Figure \ref{fig:pp_non_auto} and Figure \ref{fig:pp_auto} respectively. This section reviews these dynamical systems in more detail.  The sonic line is also plotted in the phase portraits. In the non-autonomous system, the sonic line varies in $y$ and is plotted for various $y$. In the autonomous system, the sonic line is fixed. A region of the phase portrait is carved out by the sonic lines, denoted as the \textit{sonic cone}. The nullclines are also plotted.

\par We first discuss features of the autonomous phase portrait of Figure \ref{fig:pp_auto}. The fixed points are points such that $\sigma'$ and $w'$ are zero, which correlate with the $\sigma,w$ such that the right hand side of \eqref{eq:ss_system_auto} is the zero vector. In the physically relevant regime of $\rho \geq 0$, there are three fixed points: the origin, $P^*$, and the point $P_1 = \begin{bmatrix}
    0 & 2k
\end{bmatrix}^T$. The origin and $P_1$ are both attractive fixed points which lie outside the sonic cone. On the other hand, $P^*$ lies on the sonic line itself. Trajectories starting inside the sonic cone cannot cross the sonic line, so they either must converge to $P^*$ or escape to infinity. From the phase portrait, we see trajectories within the sonic cone approach $P^*$. Now for trajectories outside of the sonic cone near $P^*$, they move away from $P^*$ and converge to the attractive fixed point at the origin. We observe $P^*$ exhibits half-stability as it is attractive to trajectories in the sonic cone while it is repulsive to trajectories outside the sonic cone. Note both the origin and $P_1$ are true fixed points such that the only permissible slope passing through both fixed points are the zero vector, in contrast to $P^*$. Note despite both nullclines intersecting the sonic line at $\begin{bmatrix}
    0 & A
\end{bmatrix}^T$ in Figure \ref{fig:pp_auto}, this is not a fixed point. When system \eqref{eq:ss_system_auto} is evaluated at $\begin{bmatrix}
    0 & A
\end{bmatrix}^T$, the left hand side of \eqref{eq:ss_system_auto} is the zero vector while the right hand side of \eqref{eq:ss_system_auto} is $\begin{bmatrix}
    0 & \beta \lambda A
\end{bmatrix}^T$, which is not the zero vector.

\par In the non-autonomous system, the sonic line is still a separatrix in the phase plane, but the sonic line varies with $y$. Trajectories beginning below the sonic line still approach the origin, like in the autonomous case. However, trajectories starting above the sonic line exhibit different behavior. Some approach infinity in $\rhohat$ and $\vhat$ monotonically. The Kidder solution in non-autonomous coordinates does this. On the other hand, some trajectories begin to converge to the origin but deflects off the moving sonic line. After the deflection, the trajectory moves towards infinity. Another group of trajectories move towards $\rhohat = 0,\vhat = \infty$, corresponding to the fixed point $P_1$ in autonomous coordinates. Subfigure \ref{subfig:pp_auto_zoom} shows the composite Kidder and asymptotic solution on the phase plot. From $y = 0$, the Kidder solution moves from the origin with the sonic line up until $y = 1$. Afterwards, the asymptotic solution returns to the origin.  

\par The non-autonomous system \eqref{eq:ss_system} is shown to be the same as the one derived in \cite{Guderley1942} and \cite{implosion_1} up to rescaling in the specific case where $n$ and $\gamma$ are given by the relation $-2 = (n + 1)(1 - \gamma)$. Subsequently, the autonomous system given in \eqref{eq:ss_system_auto} and \eqref{eq:ss_system_auto_informal} is a special case of the dynamical system \cite{Guderley1942} and further explored by authors such as \cite{Meyer_1982}, \cite{implosion_1}, \cite{shao2025blowup3dcompressiblenavierstokes}. Details on the equivalence is given in Appendix \ref{ap:equal}. 

\par The dynamical system in \cite{Guderley1942} can be written in the form $w' = {\Delta_1}/{\Delta}$ and $\sigma' = {\Delta_2}/{\Delta}$ where $\Delta$ is the sonic line. Trajectories connecting endpoints on opposite sides of the sonic line must pass through the sonic line, but can only do so smoothly at \textit{sonic points} where $\Delta = \Delta_1 = \Delta_2$. The only sonic points with non-vanishing density are $P_2$ and $P_3$. It is through sonic point $P_2$ which Merle. et al construct their smooth imploding profile \cite{implosion_1}. They seek a solution that begins to the right of the sonic lines and terminates at the origin, which is to the left of the sonic lines. Another important point is the fixed point that does not lie on the sonic line, $P_5$. 

\par In the work of Merle et al.\ \cite{implosion_1} and subsequent work such as Shao et al.\ \cite{shao2025blowup3dcompressiblenavierstokes}, they construct a sequence of smooth imploding solutions with blowup speeds $s_n < s^*$, where their similarity variable is defined as $y = r(T - t)^{1/{s}}$. Note the limiting value $s^*$ corresponds to the previously defined value of $s^*$ as the reciprocal $(1-t)$ exponent of $r_l(t)$ in \eqref{eq:r_l}. However, Merle et al.\ note their construction fails when $n = 2$ and $\gamma = {5}/{3}$ because as $s \to s^*$, the three points $P_2,P_3,P_5$ all coincide with each other. The work of Shao et al.\ specifically handles this degeneracy. For our system, the initial condition $P^*$ corresponds to this special coincidence. Solutions to \eqref{eq:ss_system_auto} that arrive at the origin may be able to be viewed as the limiting profile to the sequence of smooth imploding solutions constructed in \cite{shao2025blowup3dcompressiblenavierstokes}.

\subsection{Numerical Solutions of the Self-Similar System} 
We directly compute numerical solutions of the non-\\autonomous system \eqref{eq:ss_system} to determine the limiting self-similar profiles $\rhohat$ and $\vhat$. Lemma \ref{lem:eigen} can be applied to \eqref{eq:ss_system} at initial condition \eqref{eq:ss_ic} to show initial slopes belong to the affine space
\begin{equation}\label{eq:ss_slope}
\begin{split}
    V &= \left\{ \begin{bmatrix}
        \mu \\ 0
    \end{bmatrix} + \alpha \begin{bmatrix}
n\hat{\rho}(1)\hat{v}(1) - \lambda \hat{\rho}(1) \\
\beta \lambda\hat{v}(1)
\end{bmatrix}^T : \alpha \in \mathbb{R} \right\}, \\
    \mu &= k^{n+1}(n+3)^{-\frac{n+1}{2}} \left( (n+1) - (n+1)^{\frac{1}{2}} \right).
\end{split}
\end{equation} 
We use numerical method \textsc{ode23t} in \textsc{MATLAB} to solve \eqref{eq:ss_system} at initial condition \eqref{eq:ss_ic} and initial slope \eqref{eq:ss_slope} with $\alpha = -3$ for $n = 1$ and $\alpha = -2$ for $n = 2$. The choice of $\alpha$ is determined heuristically to ensure the solutions decay to zero. We directly solve the system as an ODE with a mass matrix to avoid unnecessary matrix inversions and for ease of providing the initial slope. These solutions for $2$D and $3$D are shown in Figure \ref{fig:hat}. From the numerical solutions, we see $\rhohat$ and $\vhat$ are monotone decreasing in $y$. Additionally, $\vhat$ (and equivalently $\sigma$) are positive for $y \geq 1$, which is behavior established for the profiles of \cite{shao2025blowup3dcompressiblenavierstokes}.

\begin{figure*}[pos=h]
     \centering
     \begin{subfigure}[h]{0.495\linewidth}
         \centering
         \includegraphics[width=\textwidth]{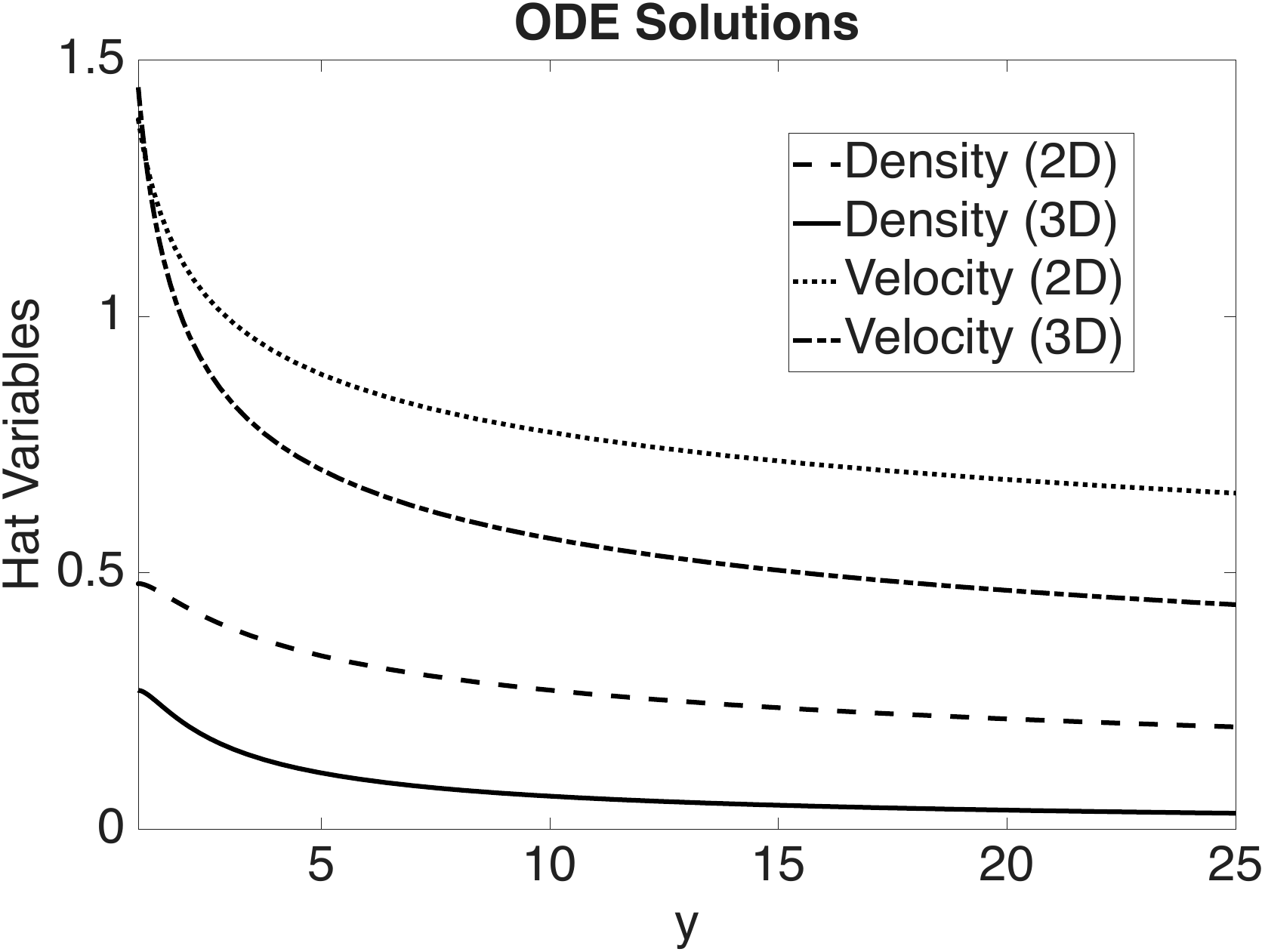}
         \caption{Standard Variables}
         \label{subfig:ode_hat}
     \end{subfigure}
     \hfill
     \begin{subfigure}[h]{0.495\linewidth}
         \centering
         \includegraphics[width=\textwidth]{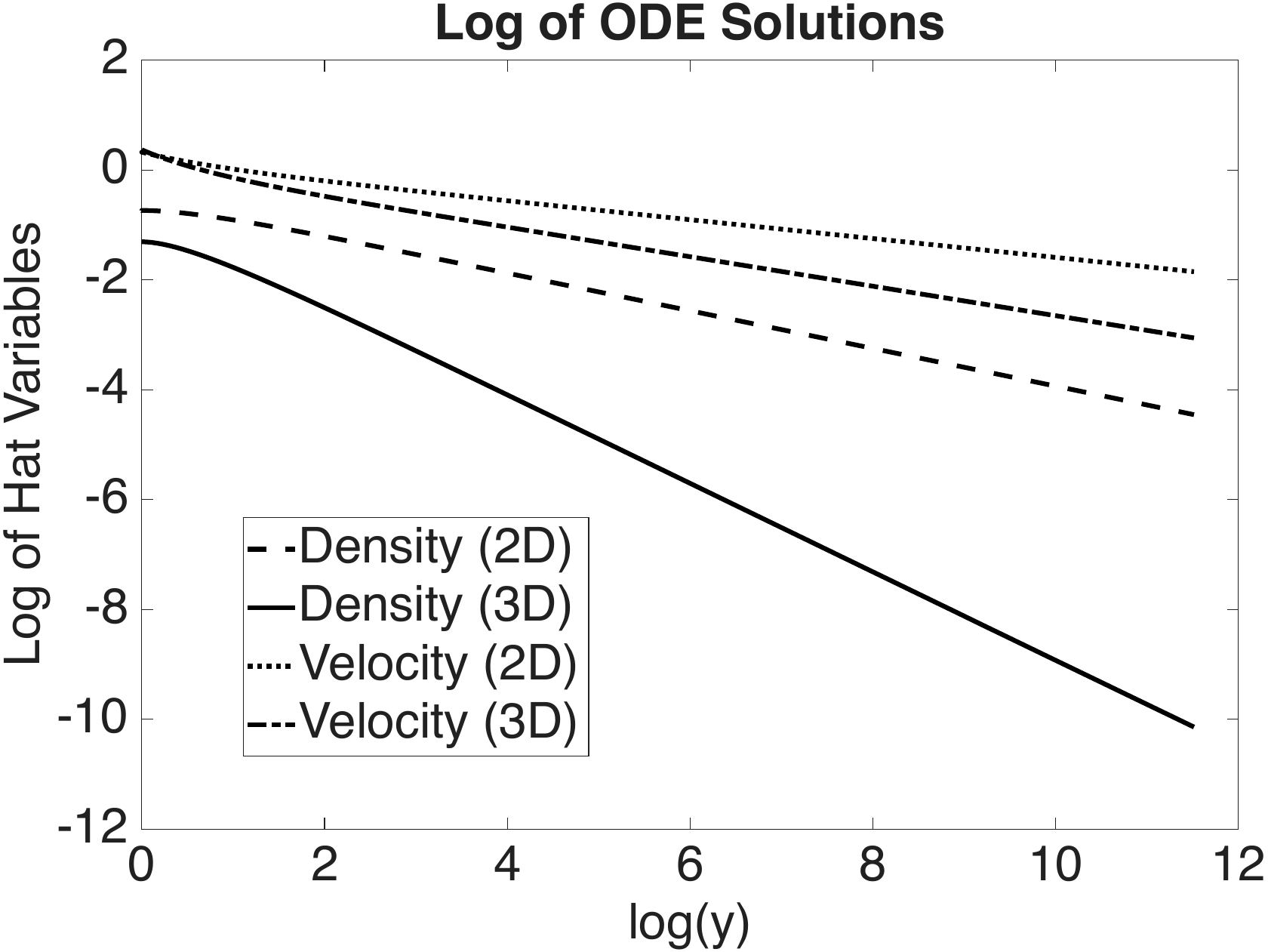}
         \caption{Log-Log Scale}
         \label{subfig:ode_hat_log}
     \end{subfigure}
        \caption{Solutions to ODE system \eqref{eq:ss_system} with initial condition \eqref{eq:ss_ic} for $2$D and $3$D. The initial slope is chosen to be $\alpha = -3$ for $n = 1$ and $\alpha = -2$ for $n = 2$ in \eqref{eq:ss_slope}. The linear behavior of the numerical solutions in the log-log scale of Subfigure \ref{subfig:ode_hat_log} indicate power law behavior of solutions in the far field. The computed slopes of the numerical solutions in log-log scale are displayed in Table \ref{tab:hat_slope}.}
        \label{fig:hat}
\end{figure*}

\subsection{Far Field Behavior of the Similarity Solution}
From Subfigure \ref{subfig:ode_hat_log}, we observe $\rhohat$ and $\vhat$ appear linear in log-log scaling for large values of $y$, corresponding to the far field dynamics of $\rhohat$ and $\vhat$. This implies a negative power law relationship between $y$ and $\rhohat$ and $\vhat$ in this regime. By substituting an ansatz of the form $y^{-k_{\rhohat}}$ and $y^{-k_{\vhat}}$ for the density and velocity respectively into \eqref{eq:ss_system_dirty}, the terms originating from time derivatives dominate the terms originating from spatial derivatives and the source term. The exponents describing the far field power law behavior can be computed as
\begin{equation}\label{eq:hat_slopes}
\begin{split}
    \rhohat(y) \sim y^{-\lambda/{A}}, \quad \vhat(y) \sim y^{-\lambda/(\beta A)}, \\
    -\frac{\lambda}{A} = (n+1)\frac{1 - \sqrt{n+1}}{1 + \sqrt{n+1}} = (n+1)(1 - s^*).
\end{split}
\end{equation}
The slopes of the numerical solutions of $\log(\rhohat)$ and $\log(\vhat)$ for $y \geq 100$ are compared to the theoretical values in \eqref{eq:hat_slopes} in Table \ref{tab:hat_slope}, showing excellent agreement between expected far field behavior and the numerical solution.

\begin{table}[pos=h]
    \centering
    \begin{tabular}{|c|c|c|}
    \hline
     Case  & Numerical & Theoretical \\
    \hline
        Density (2D) & $-0.3431$ & $-0.3431$ \\ \hline
        Density (3D) & $-0.8052$ & $-0.8038$ \\ \hline
        Velocity (2D) & $-0.1717$ & $-0.1716$ \\ \hline
        Velocity (3D) & $-0.2683$ & $-0.2679$ \\ \hline
    \end{tabular}
    \caption{Comparison between the slope of $\log{y}$ to numerically computed $\log{\rhohat}$ and $\log{\vhat}$, shown in Subfigure \ref{subfig:ode_hat_log}, to the theoretical power law scaling given in \eqref{eq:hat_slopes}. The slope is numerically computed for $y \geq 100$.}
    \label{tab:hat_slope}
\end{table}
\subsection{Mass Concentration and $L^p_{\loc}$ Blowup}\label{subsec:L^p}
To better understand the physical properties of the proposed implosion, we want to determine whether the mass concentrates and forms a measure at implosion time. The result of \cite{Chen_2022} states that given initial data with a constant farfield density and finite initial energy with respect to the constant farfield, the solution is bounded in $L^p_{\loc}$ and mass concentration does not occur. Their existence result is given for $L^p_{\loc}$ as the constant farfield density prevents existence in $L^p$ for all of space and to allow for implosions where a state variable approaches infinity. Because the result is stated in terms of finite relative energy, an exact $p$ is not stated but is implied to be at least $\gamma$ - stemming from the relative internal energy term. We show mass concentration does not occur for the cutoff implosion solution and give the maximal $p$ such that the density is bounded in $L^p_{\loc}$ into the implosion time.
\par We study the mass on the interval $[0,R]$ where $R << r_r(t)$ as $t \to 1$. Given a radial density distribution, the $L^p$ norm of $\rho$ on an interval $[a,b]$ is given by
\begin{equation}\label{eq:mass_formula}
    \Vert \rho(r,t)\Vert_{L^p\left([a,b]\right)}= \left( \int_a^b s(n)r^n \rho(r,t)^p dr \right)^{\frac{1}{p}},
\end{equation}
where $s(n)$ is a the surface area of the $n+1$-dimensional unit ball. To evaluate the $L^p$ norm on the interval $[0,R]$, the interval may be split into two sub-intervals: $[0, r_l(t)]$ and $[r_l(t),R]$. On the interval $[0, r_l(t)]$, given the density is the Kidder solution, the $L^p$ norm is 
\begin{equation}\label{eq:kidder_l_p}
\begin{split}
    \Vert \rho(r,t)\Vert_{L^p\left([0,r_l(t)]\right)} &= \left( s(n) \left( \frac{1}{n+3} \right)^{(n+1)/{2}}\frac{1}{2n+2} (r^*)^{2n+2} \right)^{\frac{1}{p}}  \\
    &\times (1-t)^{\frac{(n+1)(p+1)}{2} + \frac{(n+1)^{1/2}(p+1)}{2} - (n+1)p} \\
    &\times (1+t)^{\frac{(n+1)(p+1)}{2} - \frac{(n+1)^{1/2}(p+1)}{2} - (n+1)p}.
\end{split}
\end{equation}
To determine whether the $L^p$ norm blows up as $t \to 1$, the exponent of $(1-t)$ must be nonnegative, which occurs when
\begin{equation}\label{eq:p_crit}
    p \leq p_{\crit} = \frac{(n+1)^{1/2} + 1}{(n+1)^{1/2} - 1}
\end{equation}
when $n > 0$. Note $p_{\crit}$ may be expressed as other quantities as well, so $p_{\crit} = (n+1)(A/\lambda) = \left( (n+1)(s^* - 1)\right)^{-1}$. It is always true $1 \leq \gamma \leq p_{\crit}$ for $n > 0$, establishing the bound of $\rho(r,t)$ in the $L_{\loc}^{\gamma}$ norm. Note as $n \to \infty$, $p_{\crit}$ decreases monotonically and approaches $1$. This implies as the spatial dimension increases, the maximal $L^p$ existence of $\rho(r,t)$ decreases. However, for any finite $n$, the $L^1_{\loc}$ norm is always finite. The mass in the region $[0, r_l(t)]$ (corresponding to $p = 1$) is
\begin{equation}\label{eq:kidder_mass}
\begin{split}
    \Vert \rho(r,t)\Vert_{L^1\left([0,r_l(t)]\right)} &= s(n)\left( \frac{1}{n+3} \right)^{(n+1)/{2}}\frac{1}{2n+2} (r^*)^{2n+2} \times \\
    &(1-t)^{(n+1)^{1/2}} (1+t)^{-(n+1)^{1/2}}.
\end{split}
\end{equation}
\par To evaluate the integral on the remaining sub-interval, we change variables to $y = {r}/{r_l(t)}$ and also write $\rho(r,t)$ in terms of the self-similar form \eqref{eq:ss_ansatz}. We further subdivide the interval to $[1, a]$ and $\left[a, \frac{R}{r_l(t)}\right]$, where $[1,a]$ represents the non-power law behavior of $\rhohat(y)$ while $\rhohat(y)$ is taken to be the power law described in \eqref{eq:hat_slopes} on $\left[a, \frac{R}{r_l(t)}\right]$, motivated by Subfigure \ref{subfig:ode_hat_log}. 
The integral decomposes into
\begin{equation}\label{eq:mass_decompose_1}
\begin{split}
s(n) r_l(t)^{n+1}\tau(t)^p \int_{1}^a \hat{\rho}(y)^p y^n dy
\end{split}
\end{equation}
and
\begin{equation}\label{eq:mass_decompose_2}
s(n) r_l(t)^{n+1}\tau(t)^p\int_a^{\frac{R}{r_l(t)}} Cy^{-\frac{p\lambda}{A}} y^n dy.    
\end{equation}
The integral of \eqref{eq:mass_decompose_1} is finite as we observe $\rhohat$ is monotone decreasing from Figure \ref{fig:hat} and $\rhohat(1)$ is fixed and given by \eqref{eq:ss_ic}. Blowup depends only on the $r_l(t)^{n+1}\tau(t)^p$ term. Recalling $r_l(t) \to 0$ and $\tau(t) \to \infty$ as $t \to 1$ and defined as \eqref{eq:r_l} and \eqref{eq:tau} respectively, we find blowup does not occur as long as $p \leq p_{\crit}$ - the same condition for the Kidder solution. The integral of \eqref{eq:mass_decompose_2} evaluates to 
\begin{equation*}
    \frac{s(n)}{n +1 - p\frac{\lambda}{A}}\left( \tau(t)r_l(t)^{\frac{\lambda}{A}} \right)^p \left(R^{n+1 - \frac{p\lambda}{A}} - a^{n+1 - \frac{p\lambda}{A}} \right)
\end{equation*}
after substituting the power law given in \eqref{eq:hat_slopes}. Potential blowup is independent of $p$, and recognizing that ${A}/{\lambda} = {p_{\crit}}/{n+1}$, the time dependent term cancels out. Assuming $p < p_{\crit}$, only the integral \eqref{eq:mass_decompose_2} contributes to the $L^p_{[0,R]}$ norm of $\rho$. Additionally, the power law dependence of $\Vert \rho(r,t)\Vert_{L^p([0,R])}$ on $R$ as $t \to 1$ is given by
\begin{equation}\label{eq:Lp_power_law}
    R^{\frac{n+1}{p} - \frac{\lambda}{A}}.
\end{equation}

\par From the previous calculations, we find the maximal $p$ such that $\rho(r,t)$ stays bounded in $ L^{p}_{\loc}$, is given by $p_{\crit}$. Blowup of the $L^p_{[0,R]}$ norm depends only on the blowup of the $L^p$ norm of the Kidder solution on $[0,r_l(t)]$ and not the self-similar solution past $r_l(t)$. Since $p_{\crit} > 1$, mass concentration cannot form around the origin as predicted by the result of \cite{Chen_2022}.

\section{Heuristic Argument for Uniqueness}\label{sec:unique}
We discuss a heuristic argument for determining uniqueness of the solution presented thus far. In a similar vein to \cite{Luong_1D}, we leverage the Weak-Strong Uniqueness results introduced in \cite{Dafermos_entropy} and \cite{Diperna_entropy}. For a system of $m$ conservation laws in $\mathbb{R}^n$
\begin{equation}\label{eq:conservation law}
    \partial_t U + \sum_{\alpha = 1}^n \partial_\alpha G_\alpha (u(x,t)) = 0, 
\end{equation}
where $x \in \mathbb{R}^n$ and $u(x,t) \in \mathbb{R}^m$, the functions $\Phi,\Psi$ are an \textit{entropy} and \textit{entropy flux} pair respectively if they map from $\mathbb{R}^m \to \mathbb{R}$, $\Phi$ is convex, and the pair satisfies the entropy inequality
\begin{equation}\label{eq:entropy_con_law}
    \Phi(u)_t + \nabla \cdot \Psi(u) \leq 0.
\end{equation}
Satisfying the entropy inequality is equivalent to satisfying $D\Psi_{\alpha}(u) = D\Phi(u)DG_{\alpha}(u)$ for $\alpha = 1,\cdots,m$. Inequality \eqref{eq:entropy_con_law} is an equality when $u$ is a sufficiently smooth solution of \eqref{eq:conservation law}. The entropy inequality is used to distinguish which weak solutions - in particular, solutions of\eqref{eq:conservation law} with shocks  are physically admissible. For the isentropic Euler equations, the most commonly used entropy is the total energy $E = \frac{1}{2}\rho|\vec{v}|^2 + \frac{C}{\gamma - 1}\rho^{\gamma}$ with entropy flux $E\vec{v} + C\rho^{\gamma}\vec{v}$.

\par The Weak-Strong Uniqueness result of \cite{Dafermos_entropy} and \cite{Diperna_entropy} states if \eqref{eq:conservation law} admits an entropy flux pair $(\Phi,\Psi)$ with strictly convex entropy $\Phi$, then a locally Lipschitz continuous solution is the unique weak solution of \eqref{eq:conservation law} stemming from its initial condition. The cutoff implosion solution we propose solves the isentropic Euler equations which can be expressed in the form of \eqref{eq:conservation law},
\begin{equation}\label{eq:isen_euler_gen}
    \begin{split}
        \rho_t + \nabla \cdot (\rho \vec{v}) &= 0, \\
        (\rho \vec{v})_t + \nabla \cdot (\rho \vec{v} \otimes \vec{v} + C\rho^{\gamma}I)&= 0.
    \end{split}
\end{equation}
\par Even if we assume our solution is piecewise differentiable and thus locally Lipschitz, we cannot directly apply the result due to the unboundedness of the solution as $t \to 1$ and the total energy not being strictly convex on the whole domain. In particular, the total energy loses strict convexity at the origin where the density remains zero for all $t < 1$. We work around this issue by applying the result to a neighborhood around the cutoff point. Specifically, for any fixed time $T < 1$, on the shell $[a, b]$ where $0 < a < r^* < b$, the density is bounded from below by a constant depending on $T$ but strictly greater than zero. This avoids any issues with loss of strict convexity of the associated entropy, and we may directly apply the theorem thereby stating the constructed solution is the unique weak solution up to $T < 1$ on $[a,b]$. While this discussion is not a rigorous treatment of uniqueness, it does suggest our constructed solution to the cutoff implosion problem is the unique solution stemming from the cutoff implosion initial data.

\section{Numerical Simulations of the PDE}\label{sec:numerics}
\begin{figure}[pos=h]
     \centering
     \begin{subfigure}[h]{0.49\linewidth}
         \centering
         \includegraphics[width=\textwidth]{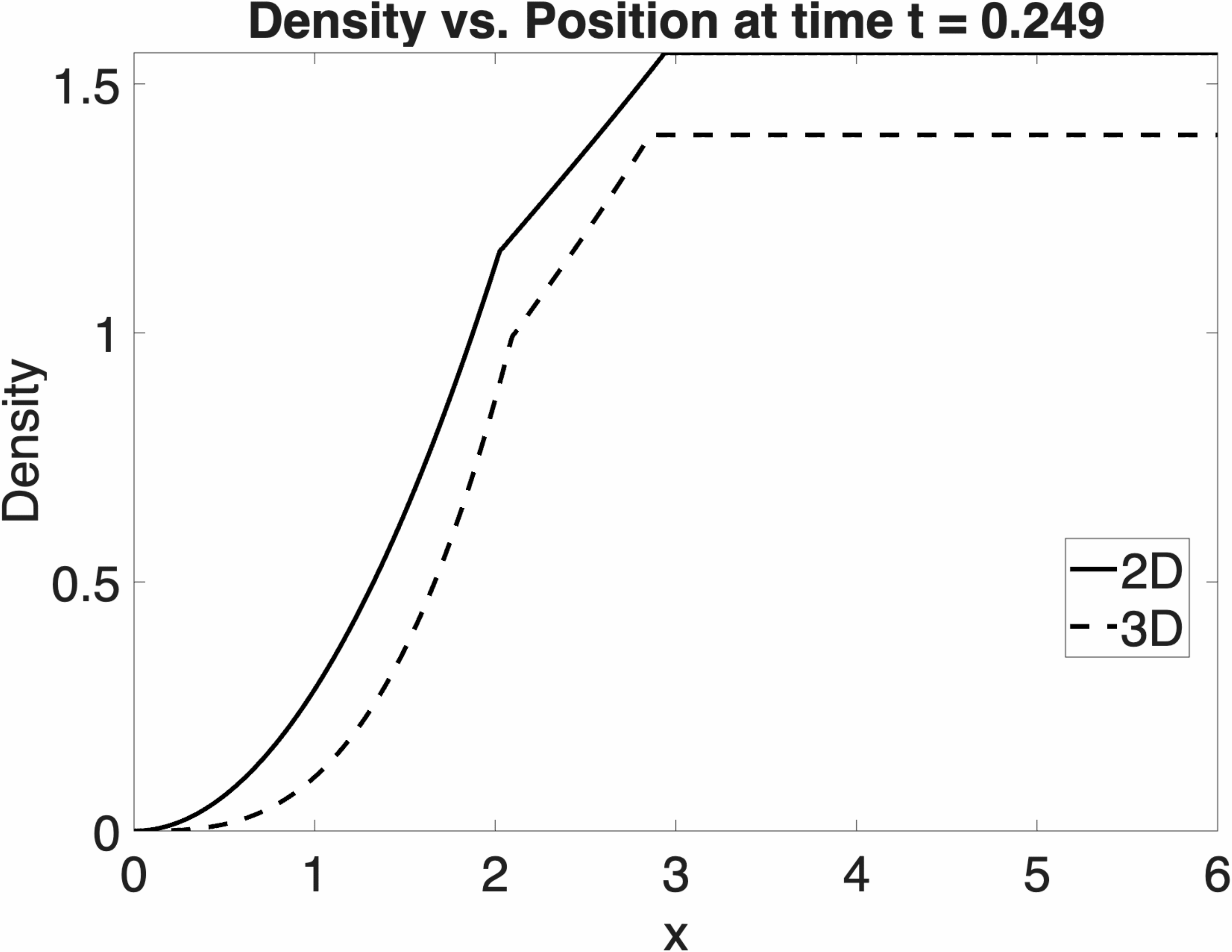}
         \caption{$t \approx 0.25$}
         \label{subfig:density_0.25}
     \end{subfigure}
     \hfill
     \begin{subfigure}[h]{0.49\linewidth}
         \centering
         \includegraphics[width=\textwidth]{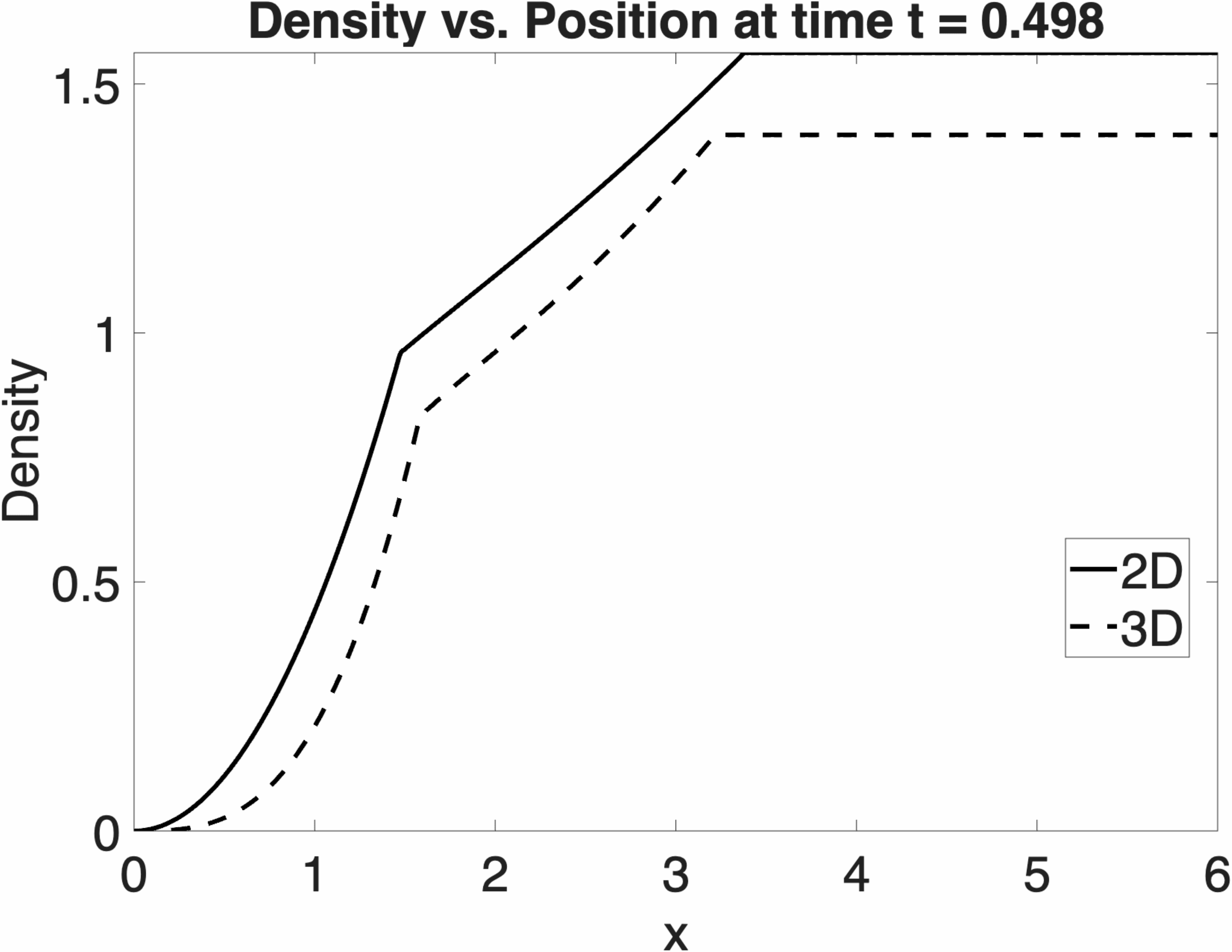}
         \caption{$t \approx 0.5$}
         \label{subfig:density_0.5}
    \end{subfigure}
    \vskip\baselineskip
    \begin{subfigure}[h]{0.49\linewidth}
         \centering
         \includegraphics[width=\textwidth]{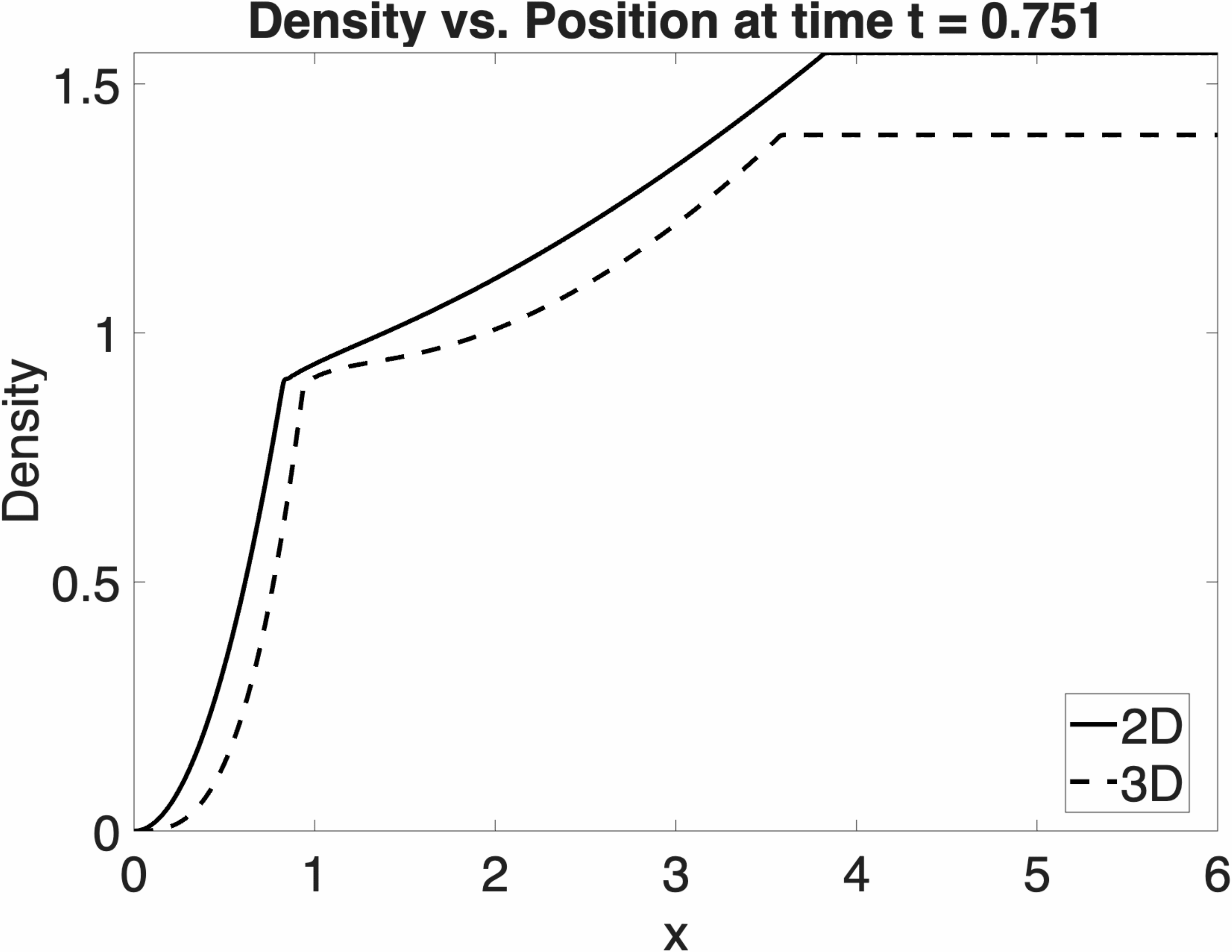}
         \caption{$t \approx 0.75$}
         \label{subfig:density_0.75}
     \end{subfigure}
     \hfill
     \begin{subfigure}[h]{0.49\linewidth}
         \centering
         \includegraphics[width=\textwidth]{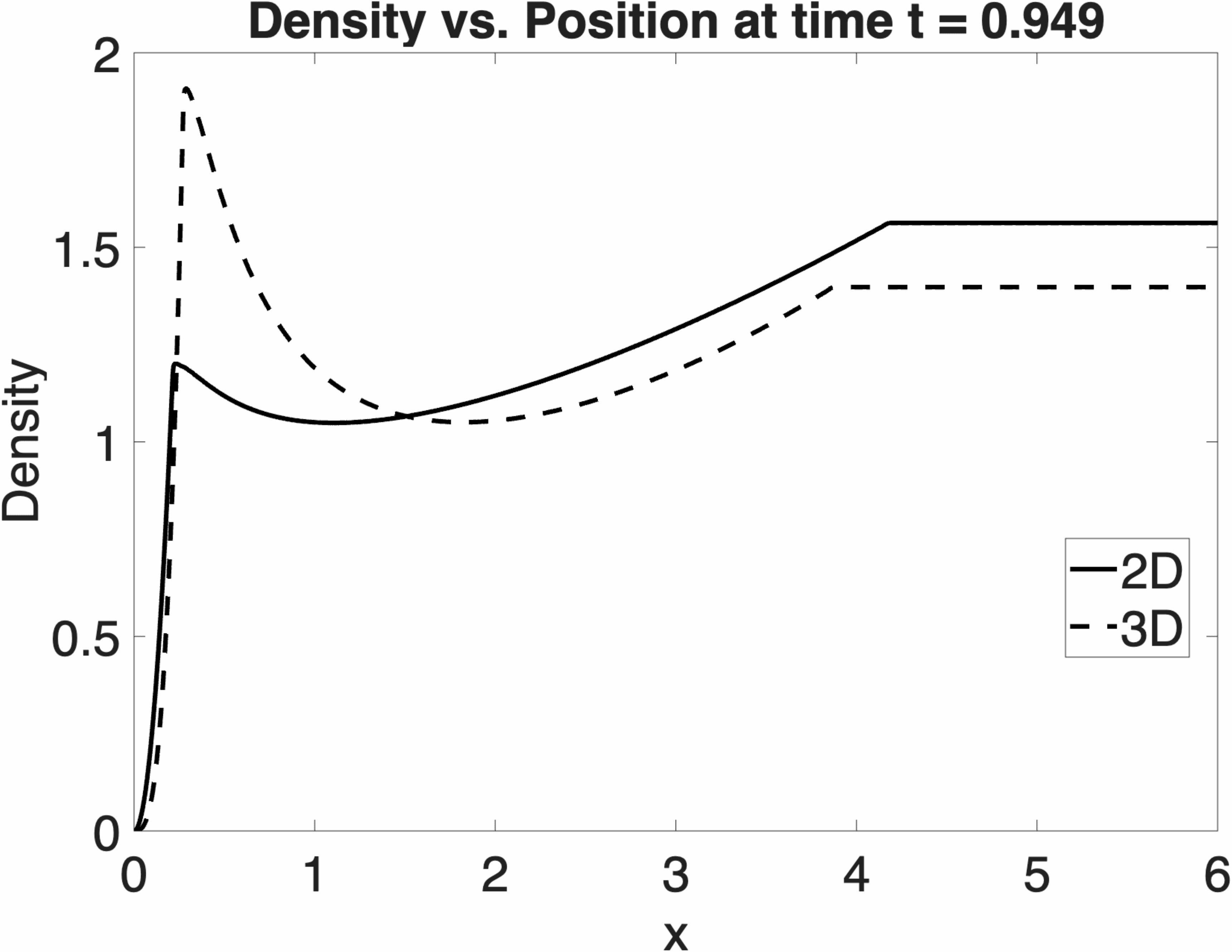}
         \caption{$t \approx 0.95$}
         \label{subfig:density_0.95}
     \end{subfigure}
        \caption{Computed density to the potential flow equations \eqref{eq:isen_mass} and \eqref{eq:isen_mom} with cutoff implosion initial data \eqref{eq:cutoff_imploding_condition} and $r^*$ = 2.5. Approximate times $t = 0.25$, $0.5$, $0.75$ and $t = 0.95$ in $2$D (solid) and $3$D (dashed) illustrating the evolution of the solution are displayed.}
        \label{fig:density}
\end{figure}
\begin{figure}[pos=h]
     \centering
     \begin{subfigure}[h]{0.49\linewidth}
         \centering
         \includegraphics[width=\textwidth]{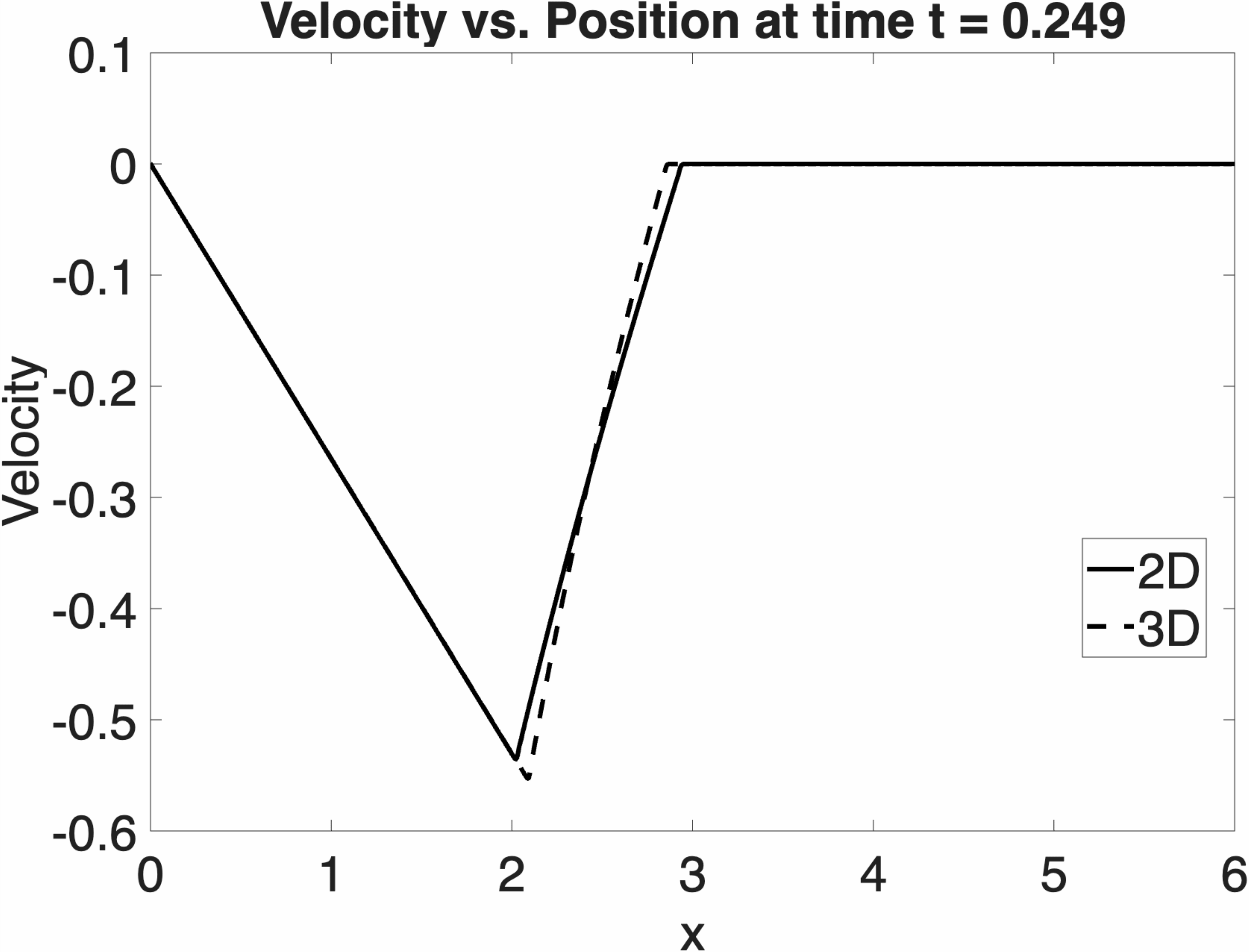}
         \caption{$t \approx 0.25$}
         \label{subfig:velocity_0.25}
     \end{subfigure}
     \hfill
     \begin{subfigure}[h]{0.49\linewidth}
         \centering
         \includegraphics[width=\textwidth]{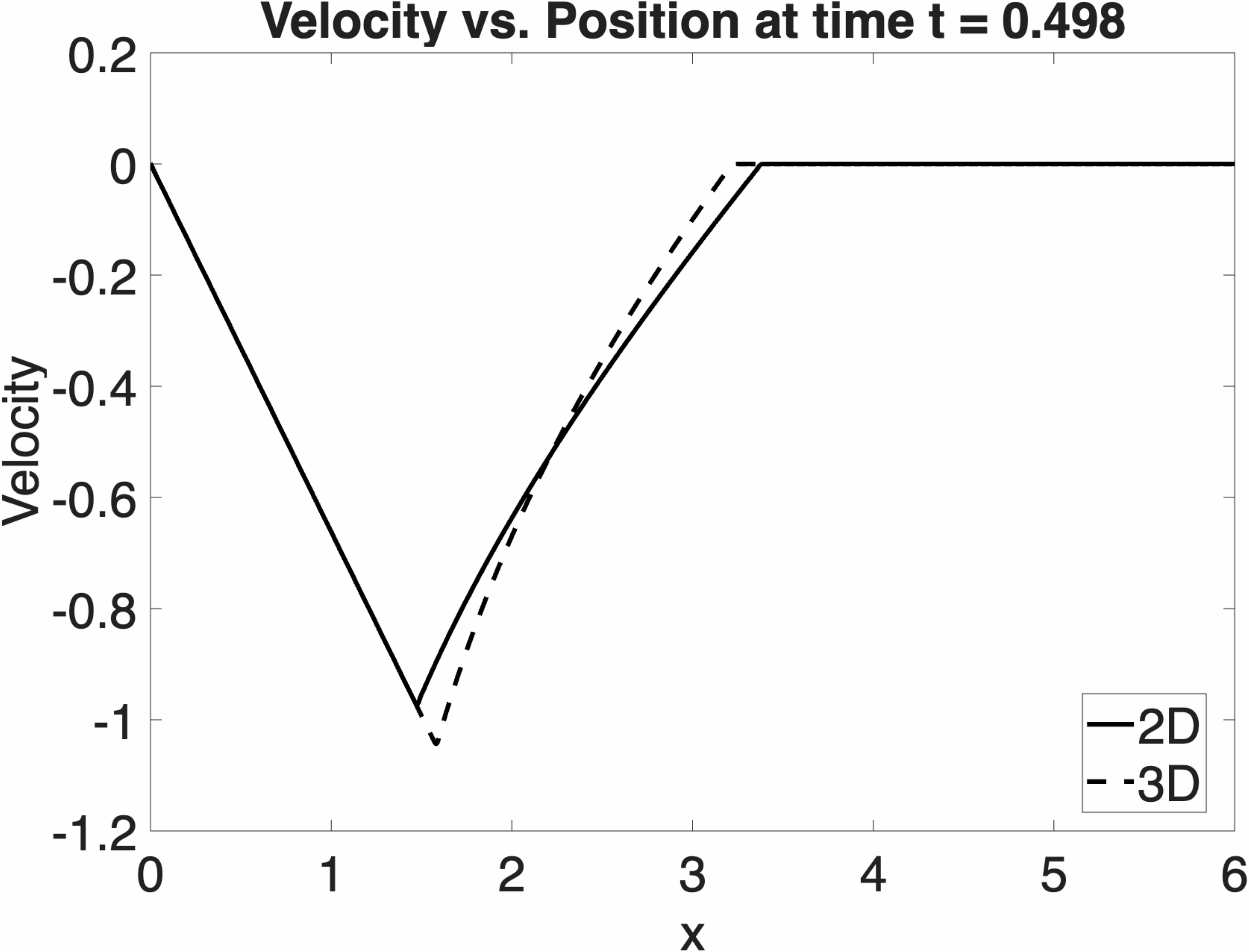}
         \caption{$t \approx 0.5$}
         \label{subfig:velocity_0.5}
    \end{subfigure}
    \vskip\baselineskip
    \begin{subfigure}[h]{0.49\linewidth}
         \centering
         \includegraphics[width=\textwidth]{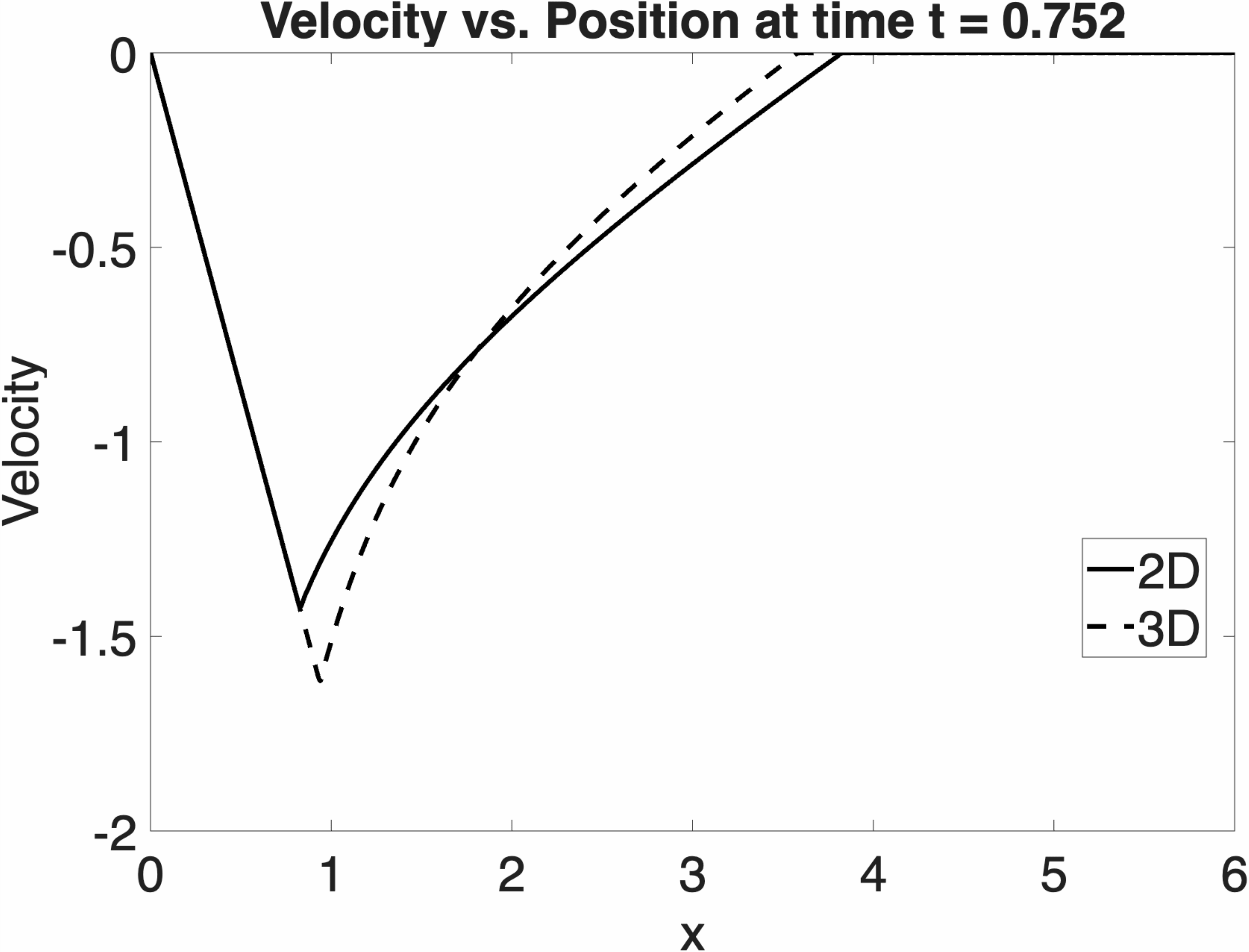}
         \caption{$t \approx 0.75$}
         \label{subfig:velocity_0.75}
     \end{subfigure}
     \hfill
     \begin{subfigure}[h]{0.49\linewidth}
         \centering
         \includegraphics[width=\textwidth]{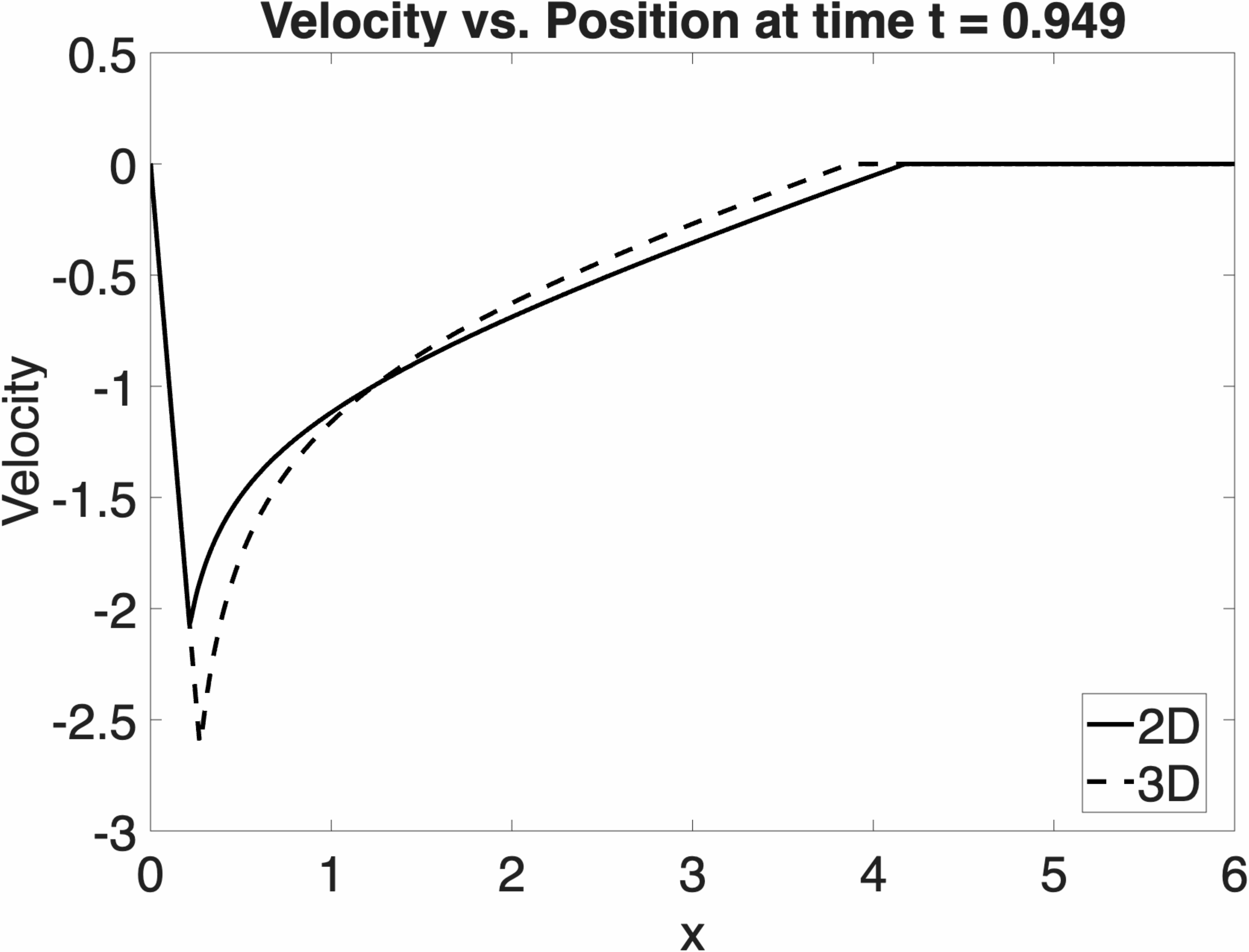}
         \caption{$t \approx 0.95$}
         \label{subfig:velocity_0.95}
     \end{subfigure}
        \caption{Computed velocity to the potential flow equations \eqref{eq:isen_mass} and \eqref{eq:isen_mom} with cutoff implosion initial data \eqref{eq:cutoff_imploding_condition} and $r^*$ = 2.5. Approximate times $t = 0.25$, $0.5$, $0.75$ and $t = 0.95$ in $2$D (solid) and $3$D (dashed) illustrating the evolution of the solution are displayed.}
        \label{fig:velocity}
\end{figure}
We support the analysis done in the previous sections with numerical simulations. Our goal is to verify the formulas for the inner and outer radii along with the imploding density and velocity at the inner radius. Once the simulation progresses close to implosion time, we determine whether the solution approaches a self-similar asymptotic limit. Additionally, we verify the numerical solution satisfies the predicted time-scaling behavior. Finally, the $L^p$ norm of density for various $p$ is computed from the numerical solutions.

\subsection{Lagrangian Leapfrog}
\par The numerical simulation of the potential flow equations \eqref{eq:isen_mass} and \eqref{eq:isen_mom} is performed with a Lagrangian leapfrog scheme as introduced in \cite{VonNeumannArtificialViscosity} and further refined in \cite{NohArtificialViscosity}. In this Lagrangian scheme, cells are defined on the intervals between nodes. The nodes are interspersed throughout the spatial domain and kinematic variables like position and velocity are defined on these nodes. The numerical scheme tracks the evolution of the cells by advecting the nodes. Thermodynamic variables such as the density and pressure are defined to each cell. Historically, this scheme was introduced to better resolve shocks via artificial viscosity as elaborated in \cite{VonNeumannArtificialViscosity} and \cite{NohArtificialViscosity}. We do not predict any shocks in our analysis, so we set the artificial viscosity to zero. 

\subsection{Adaptive Mesh Refinement}
\par As the simulation approaches the implosion time, we perform an adaptive mesh refinement near the implosion point $r_l(t)$ to capture the self-similar profile without losing resolution. Without adaptive mesh refinement, the interval $[0,r_l(t)]$ will shrink in time, and the number of cells within this region will decrease as well. We use a simple criteria and procedure: whenever $r_l(t)$ decreases by a set factor, we increase the amount of nodes in between the origin and $r_l(t)$. 
\par Adaptive mesh refinement allows for high resolution of the implosion with only logarithmic scaling in the number of necessary nodes, especially for a fixed point singularity in one spatial variable (in this case $r$). We add additional mesh points only the spatial interval of the Kidder solution. Although our goal is to resolve the structure to the right of $r_l(t)$, increasing resolution between the origin and $r_l(t)$ still accomplishes this because every particle originating in $(0,r^*)$ will eventually fall to the right of $r_l(t)$ at the time prescribed in \eqref{eq:time_cross}.

\par By choosing to add nodes on the interval $(0,r_l(t))$ corresponding to the Kidder solution, we assign density and velocity values to all nodes using the exact formula for the Kidder solution. Here we leverage the closed form solution on this region. 

\par Adaptive mesh refinement begins after $\rho(r_l(t),t)$ starts to increase in time. Given that $v(r_l(t),t)$ is expected to be a global minimum for velocity in space, we track $r_l(t)$ in the numerical simulation as $r_l(t) = \argmin_{r} v(r,t)$. It is vital to design a way of identifying $r_l(t)$ without directly using the simulated clock time due to the well-known  accumulation small errors every time the time step is added to the clock time.

\subsection{Discussion of Finite Difference Scheme}
\par It is critical to choose a suitable numerical scheme for computing finite time singularities at a small scale.  Most notably, any Eulerian scheme has effective diffusion or dispersion as can be seen by modified equation analysis \cite{Leveque}. Moreover the equation has no diffusive scale; it is purely advective at all scales.  Thus we need a numerical method that does not introduce artificial diffusion or dispersion. Otherwise we can not properly resolve the singularity as it forms at increasingly smaller scales.   We choose a Lagrangian leapfrog scheme over other methods as this scheme is not diffusive. Generally speaking, Lagrangian schemes avoid this problem.  Another example where a Lagrangian scheme is used to compute high resolution numerics of a singularity is in \cite{Yao}, in which a Lagrangian scheme was used to solve the aggregation part of an aggregation diffusion equation.  More discussion on the numerical diffusion of Eulerian methods in hydrodynamics can be found in the introduction of \cite{compuational_euler}. 

\par On the other hand, the Lagrangian leapfrog method of \cite{VonNeumannArtificialViscosity} is dispersive, which means nonphysical oscillations appear in its numerical solution when trying to resolve shocks. Since shocks are not predicted to appear in the solution, we do not expect numerical solutions generated by the Lagrangian leapfrog scheme to suffer from nonphysical oscillations. Thus, we choose the non-diffusive scheme of \cite{VonNeumannArtificialViscosity} to best preserve the fine details of the solution near implosion time.

\par To begin, the initial data \eqref{eq:cutoff_imploding_condition} is discretized. Nodes are assigned to the numerical domain of $[0,2r^*]$ with $N + 1$ nodes such that
\begin{equation*}
    0 = r^1_1 < r^1_2 < \cdots r^1_{N+1} = 2r^*,
\end{equation*}
where $r^n_k$ is the position of the $k$th node at time $t^n$. The right boundary of the numerical domain is chosen to be $2r^*$ so $r_r(t)$ given in equation \eqref{eq:r_r} does not reach the right boundary by $t = 1$. The initial velocity $v^{1/2}_k$ is also defined at each node. Note the velocity is defined on a staggered time grid from the other variables. In between each node lies a cell, in which the density at time $t^n$ between nodes $r^n_k$ and $r^n_{k + 1}$ is defined as $\rho^n_{k + 1/2}$. To assign the initial data, we associate the spatial point $r^n_{k + 1/2} = \frac{r^n_k + r^n_{k+1}}{2}$ to each cell and take $\rho^1_{k + 1/2} = \rho\left(r^1_{k + 1/2},0\right)$, where $\rho(r,0)$ is defined in \eqref{eq:cutoff_imploding_condition}. After the initial density is assigned, the mass $m_{k+1/2}$ can be assigned to each cell and is set to be constant in time. The mass at each node for $k > 1$ is defined as $m_k = \frac{1}{2} \left(m_{k+1/2} + m_{k-1/2} \right)$. Initial geometric quantities, such as the surface area of spheres with radii prescribed the nodes and the volume of the cells, are computed. The initial time step is defined such that sound cannot cross a cell in a single cycle, which requires
\begin{equation}\label{eq:timestep}
\Delta t^n = \chi_C \min_{1 \leq k \leq N} \frac{r^n_{k + 1} - r^n_{k}}{c^n_{k + 1/2}},
\end{equation}
where $c^n_k$ is the speed of sound \eqref{eq:sos} and $\chi_C = 0.5$ is selected to be the Courant Number. 

\begin{figure}[pos=h]
    \centering
    \includegraphics[width=0.95\linewidth]{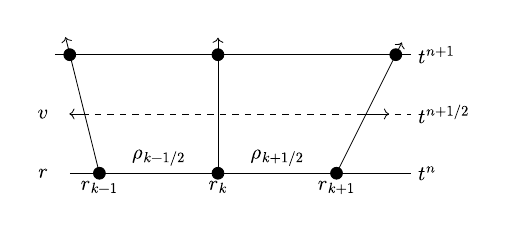}
    \caption{The stencil for the Lagrangian leapfrog method. At integer time levels, the position is defined at the nodes (filled circles) while the density is defined on the cells in between the nodes. The velocities are defined on nodes at fractional time levels and are represented as arrows. In Lagrangian numerical methods, the nodes move with the computed velocity.}
    \label{fig:stencil}
\end{figure}

\par The Lagrangian leapfrog method begins by advancing the velocity of each node. The update is formed by discretizing the Lagrangian formulation of \eqref{eq:isen_mom},
\begin{equation}\label{eq:isen_mom_lagrange}
    \rho \frac{Dv}{Dt} = -\nabla p,
\end{equation}
where $\frac{D}{Dt} = \frac{d}{dt} + v \frac{d}{dr}$ is the material derivative. The discretization of \eqref{eq:isen_mom_lagrange} becomes
\begin{equation*}
    v^{n + 1/2}_k = v^{n - 1/2}_k - \Delta t^n \frac{A^n_k}{m_k} \left(P^n_{k +1/2} - P^n_{k - 1/2} \right),
\end{equation*}
where $A^n_k$ is the surface area of the corresponding dimensional sphere with radius $r^n_k$ and $P^n_{k +1/2}$ is the pressure. In the isentropic case, pressure is related to density via \eqref{eq:pressure} and is discretized as $P^n_{k +1/2} = \left(\rho^n_{k +1/2}\right)^\gamma$. Next, each node advects with the new velocity by
\begin{equation*}
    r^{n+1}_k = r^n_k + \Delta t^n u_k^{n + 1/2}.
\end{equation*}
Geometric quantities are recomputed using the new node positions. To compute the new density, we use the fact that the mass is assumed to be conserved for each cell and define
\begin{equation*}
    \rho^{n+1}_{k + 1/2} = \frac{m_{k + 1/2}}{V^{n+1}_{k + 1/2}},
\end{equation*}
where $V^{n+1}_{k + 1/2}$ is the volume of the spherical shell bounded by spheres with outer radius $r^{n+1}_{k + 1}$ and inner radius $r^{n+1}_{k}$. Finally, the time step is recalculated by equation \eqref{eq:timestep} at time index $n + 1$. A stencil describing the location of variables in relation to the nodes and cells is shown in Figure \ref{fig:stencil}.

\begin{figure}[pos=h]
    \centering
    \includegraphics[width=0.95\linewidth]{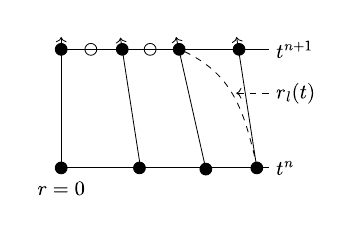}
    \caption{A schematic showing how adaptive mesh refinement is performed. Nodes - the filled circles - advect from time $t^n$ to $t^{n+1}$. The dashed line shows the movement of $r_l(t)$. When adaptive mesh refinement occurs, all of the cells in between $0$ and $r_l(t)$ are split. A new node - indicated by the unfilled circles - is added as the midpoint of the old nodes.}
    \label{fig:amr}
\end{figure}

\par Adaptive mesh refinement first occurs when $\rho(r_l(t),t)$ starts to become a relative maximum. Once this occurs, $r_l$ is identified numerically and saved as $r_l = r^{n+1/2}_i$, where $i =  \argmin_{1 \leq k \leq N+1} v^{n + 1/2}_k$. At every subsequent timestep after the Lagrangian leapfrog steps are finished but before the next time step is computed, $r_l$ is calculated. If the current $r_l$ is a user set fraction of the previously saved $r_l$, then adaptive mesh refinement occurs and the new $r_l$ is saved. When adaptive mesh refinement occurs, all of the cells up to node $i$ are halved such that a new node is placed at the midpoint of every old node, given by $r^{\text{new}}_{2k} = \frac{1}{2}(r^{\text{old}}_{k} + r^{\text{old}}_{k + 1} )$ where ``new'' refers to the nodes after mesh refinement and ``old'' refers to the nodes before mesh refinement. Figure \ref{fig:amr} shows an example of the mesh refinement process. Data is assigned to all of the new nodes by assuming the Kidder solution \eqref{eq:original_imploding_solution} applies at these nodes. The velocity on the new mesh is defined as
\begin{equation}\label{eq:amr_velocity}
    v^{\text{new}}_k = v^{\text{old}}_i r^{\text{new}}_k,
\end{equation}
and then density on the new mesh is defined as
\begin{equation}\label{eq:amr_density}
    \rho^{\text{new}}_{k + 1/2} = \rho^{\text{old}}_{i - 1/2} \left( \frac{r^{\text{new}}_{k + 1/2}}{ r^{\text{old}}_{i - 1/2} } \right)^{n+1}.
\end{equation}
The reassignments in \eqref{eq:amr_velocity} and \eqref{eq:amr_density} are constructed directly from the data without using the time. This is done to avoid any propagation of errors associated with computing the time especially close to implosion time. After the new mesh is defined, the mass is recomputed on the new cells, along with geometric quantities.

\subsection{Simulation Parameters}

\par We simulate the potential flow equations \eqref{eq:isen_mass} and \eqref{eq:isen_mom} with initial condition \eqref{eq:cutoff_imploding_condition} for both $n = 1$ and $n = 2$, corresponding to cylindrical and spherical symmetric geometry respectively. The numerical domain is $[0,5]$ with initially $3001$ nodes and $3000$ cells. Instead of halting the simulation at a set time, we end the simulation once the maximum density has achieved a set value: $10^5$ for $n = 1$ and $10^{10}$ for $n = 2$. The boundary condition of our simulations are $v = 0$ at $r = 0$ and $v = 0$ at $r = 5$, with the right boundary condition justified as the gas has constant density and zero velocity in this region left unperturbed. We select the cutoff position in the initial data to be $r^* = 2.5$. Adaptive mesh refinement begins when $\rho(r_l(t),t)$ starts to increase in time. Empirically, we find this to be around $t = 0.9$ and start adaptive mesh refinement after $t = 0.9$. Successive refinement occurs whenever $r_l(t)$ decreases by a factor of three, and the mesh is refined by splitting every cell in between $[0, r_l(t)]$ into two new cells. 

\par Figures \ref{fig:density} and \ref{fig:velocity} show the evolution of the density and velocity respectively at various times up until $t = 0.95$ to give a general sense of how the solution evolves at times far from $t = 1$. The percent error between predicted values at $r_l(t)$ and computed values at times leading up to $t = 0.9$, before adaptive mesh refinement occurs, is displayed in Table \ref{tab:err}. To better understand the solution behavior near $t = 1$, Figures \ref{fig:2D_blowup} and \ref{fig:3D_blowup} show the density and velocity profiles in the similarity variable $y$ near the blowup in $2$D and $3$D respectively. The time-scaling between $r_l(t)$ and extrema of the density and velocity is shown in Figure \ref{fig:scaling}. The $L^p$ norm on $[0,r]$ for various choices of $p$ are displayed in Figures \ref{fig:mass}, \ref{fig:pgamma}, \ref{fig:p_crit}, and \ref{fig:p_crit_over}.




\begin{table}[pos=h]
    \centering
    \begin{tabular}{|c|c|c|c|}
    \hline
    Feature & Min & Max & Mean \\
    \hline
    $r_l(t)$ (2D) & $-0.24\%$ & $0.50\%$ & $0.08\%$ \\
    \hline
    $r_l(t)$ (3D) & $-0.24\%$ & $0.10\%$ & $-0.05\%$ \\
    \hline
    $\rho(r_l(t),t)$ (2D) & $-0.47\%$ & $0.05\%$ & $-0.13\%$ \\
    \hline
    $\rho(r_l(t),t)$ (3D) & $-0.89\%$ & $-0.16\%$ & $-0.41\%$ \\
    \hline
    $v(r_l(t),t)$ (2D) & $-0.46\%$ & $-0.06\%$ & $-0.28\%$ \\
    \hline
    $v(r_l(t),t)$ (3D) & $-0.62\%$ & $-0.06\%$ & $-0.35\%$ \\
    \hline
    $m(0,r_l(t))$ (2D) & $-0.94\%$ & $2.2\%$ & $0.39\%$ \\
    \hline
    $m(0,r_l(t))$ (3D) & $-1.40\%$ & $0.84\%$ & $-0.19\%$ \\
    \hline
    \end{tabular} 
    \caption{Percent error between the analytical and computed $r_l(t), \rho(r_l(t),t), v(r_l(t),t)$ and $ m(0,r_l(t))$ from $t \in (0, 0.9]$. The percent error formula is $\text{\% Error}_x = 100 \times \frac{x_{\text{comp}} - x_{\text{pred}}}{x_{\text{pred}}}$ where $x_{\text{comp}}$ is the computed value and $x_{\text{pred}}$ is the predicted value. The minimum, maximum, and mean percent errors are displayed.}
    \label{tab:err}
\end{table}

\begin{figure}[pos=h]
    \centering
    \includegraphics[width=0.95\linewidth]{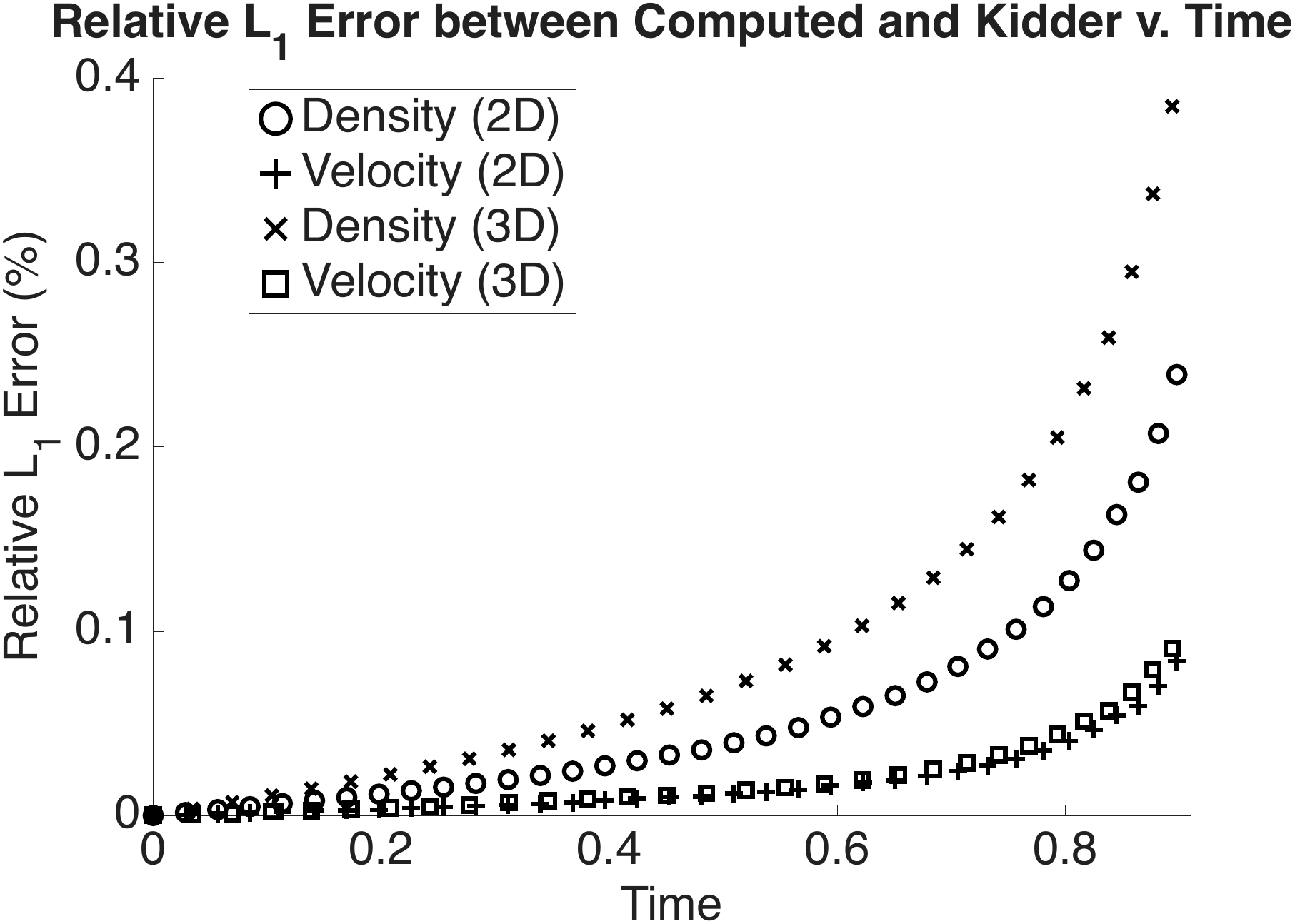}
    \caption{Percent $L^1$ error between the Kidder solution \eqref{eq:original_imploding_solution} computed by \eqref{eq:L1_norm} and the computed solution, taken on the interval $[0, r_l(t)]$ for $0 < t \leq 0.9$.}
    \label{fig:kid_err}
\end{figure}

\subsection{Early Time Dynamics}
\par We compare our analytical expressions for $r_l(t)$ \eqref{eq:r_l}, $\rho(r_l(t),t)$ \eqref{eq:rho_r_l}, $v(r_l(t),t)$ \eqref{eq:v_r_l}, and $ \Vert \rho(r,t)\Vert_{L^1\left([0,r_l(t)]\right)}$ \eqref{eq:mass_formula} to numerical data on times far away from the implosion time. These comparisons are made before adaptive mesh refinement is done in order to verify our predictions before assumptions on the solution are made in the numerical scheme. The percent errors on all of these quantities, displayed in Table \ref{tab:err}, are low and bounded in a $2.5\%$ range. The notable outlier is the mass in the implosion region, which has a slightly higher percent error range than the other quantities. We interpret this as the result of the mass in the imploding region being a quantity that is computed from the directly simulated variables of $\rho$ and $r_l(t)$. The directly simulated variables $\rho,v$, and $r_l(t)$ all enjoy much lower percent error, being bounded by $1\%$ error. The low percent error indicates the expressions for $r_l(t), \rho(r_l(t),t), v(r_l(t),t)$, and $ m(0,r_l(t))$ are correct. 

\par Additionally, we compute the percent $L^1$ error, in space between the Kidder solution and the computed solution on the interval $[0, r_l(t)]$. The percent $L^1$ error is computed as
\begin{equation}\label{eq:L1_norm}
\begin{split}
    \text{\% Error} &= 100 \times \frac{\left\| y^A - y^C \right\|_1}{\left\| y^A \right\|_1} \\
    &= 100 \times \frac{\frac{1}{V} \sum_{i = 1}^{N} \left| y^A(x_i) - y^C_i \right| V_i}{\frac{1}{V} \sum_{i = 1}^{N} \left| y^A_i \right| V_i},
\end{split}
\end{equation}
where the $A$ and $C$ superscripts represent the analytical and computed solutions respectively, $N$ is the number of nodes or volumes, $x_i$ is the location of the $i$th node or volume, $V_i$ is the cell volume, and $V$ is the sum of the cell volumes. In Figure \ref{fig:kid_err}, the $L_1$ error between the Kidder solution and computed solution is displayed for density and velocity in $2$D and $3$D. We observe from Figure \ref{fig:kid_err} the $L_1$ error monotonically increases as time progresses. It is expected for errors to accumulate in time as the simulation progresses. Despite the increasing time, the percent $L_1$ errors are bounded by $0.5\%$. Both the low percent error of features presented in Table \ref{tab:err} and the low percent $L_1$ error between the Kidder solution and computed solution justify the hypothesis the solution remains the Kidder solution on the interval $[0, r_l(t)]$. As such, this justifies our adaptive mesh refinement as the simulation progresses to implosion time.


\subsection{Near Implosion Time}
\par The adaptive mesh refinement allows the numerical scheme to simulate times close to $t = 1$ corresponding to large values of $\rho(r_l(t))$. In the close to implosion regime, we predict the solution should adhere to the dynamics of the ODE system in \eqref{eq:ss_system}. By plotting the numerical data with respect to the similarity variable and normalizing the quantities, we see in Figures \ref{fig:2D_blowup} and \ref{fig:3D_blowup} for 2D and 3D respectively. Note the plotted velocity is normalized by $\min_y v(y) < 0$ to be positive. In all of the comparison plots, the data coincides with each other on the interval $y = [0,1]$ and in the log-log scale as the numerical solution is the Kidder solution on this interval. The displayed times are computed from the maximum density in the simulation and \eqref{eq:rho_r_l}.

\begin{figure*}[pos=h]
     \centering
     \begin{subfigure}[h]{0.49\linewidth}
         \centering
         \includegraphics[width=\textwidth]{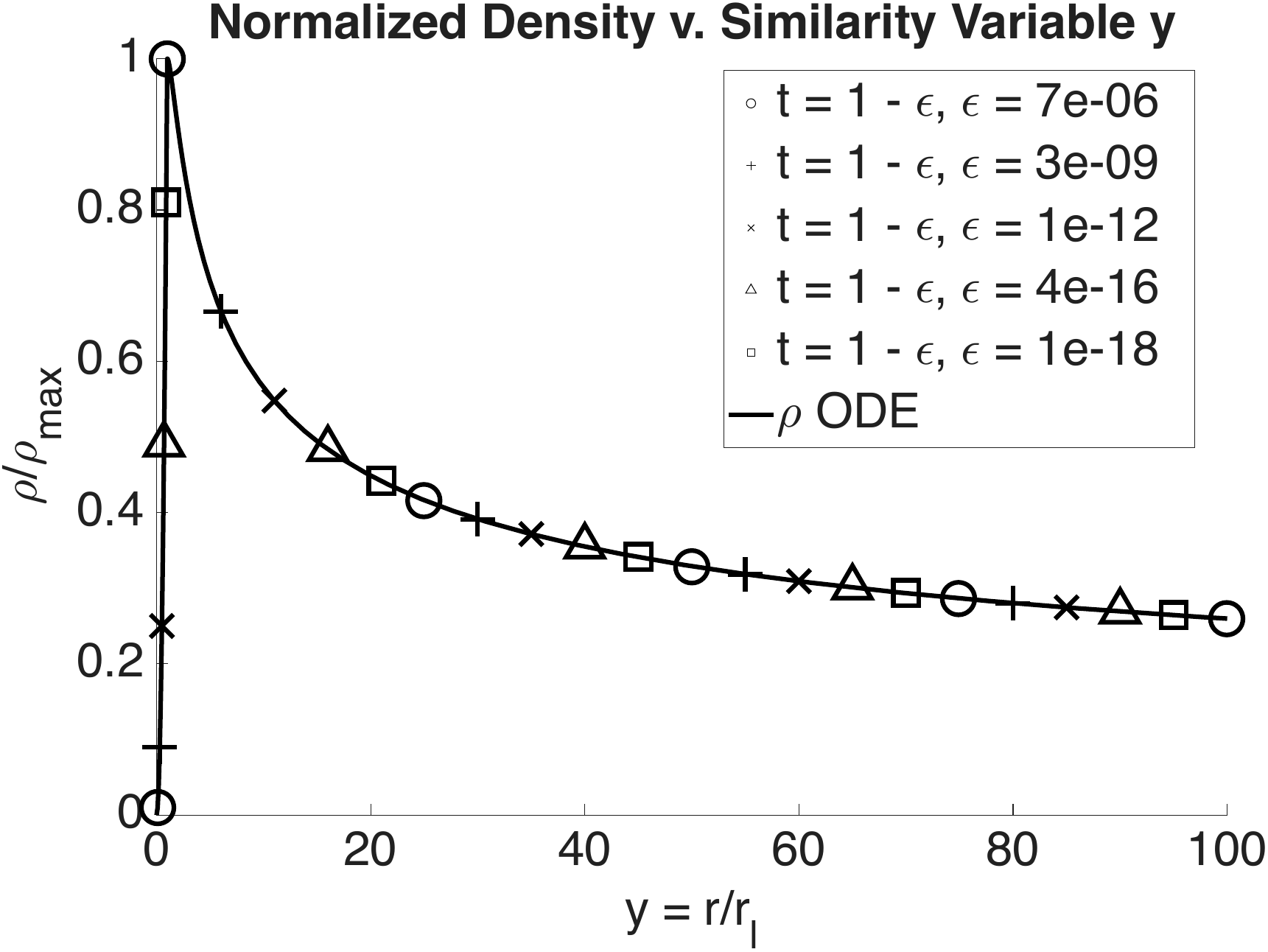}
         \caption{}
         \label{subfig:blowup_density_2D}
     \end{subfigure}
     \hfill
     \begin{subfigure}[h]{0.49\linewidth}
         \centering
         \includegraphics[width=\textwidth]{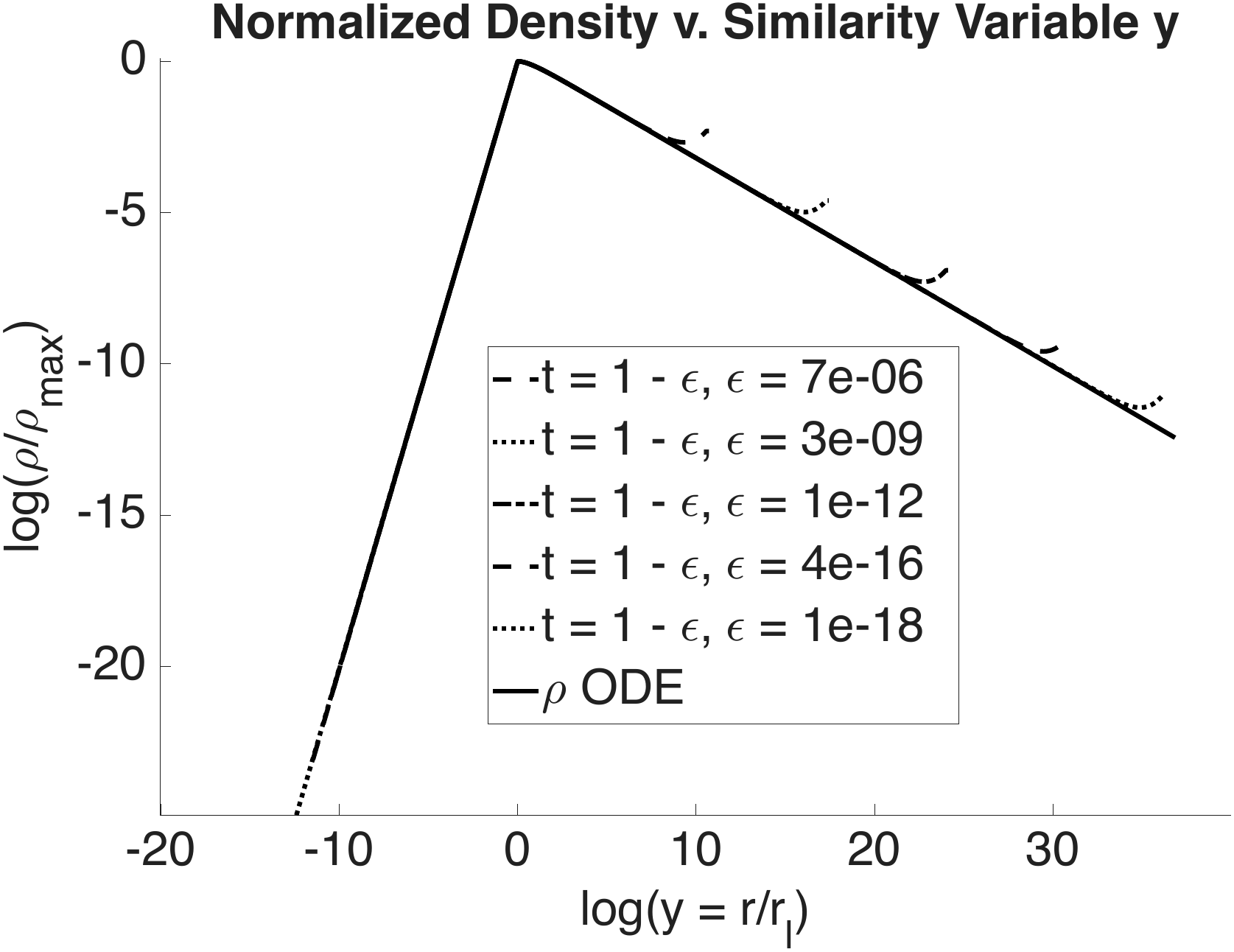}
         \caption{}
         \label{subfig:blowup_density_log_2D}
    \end{subfigure}
    \vskip\baselineskip
    \begin{subfigure}[h]{0.49\linewidth}
         \centering
         \includegraphics[width=\textwidth]{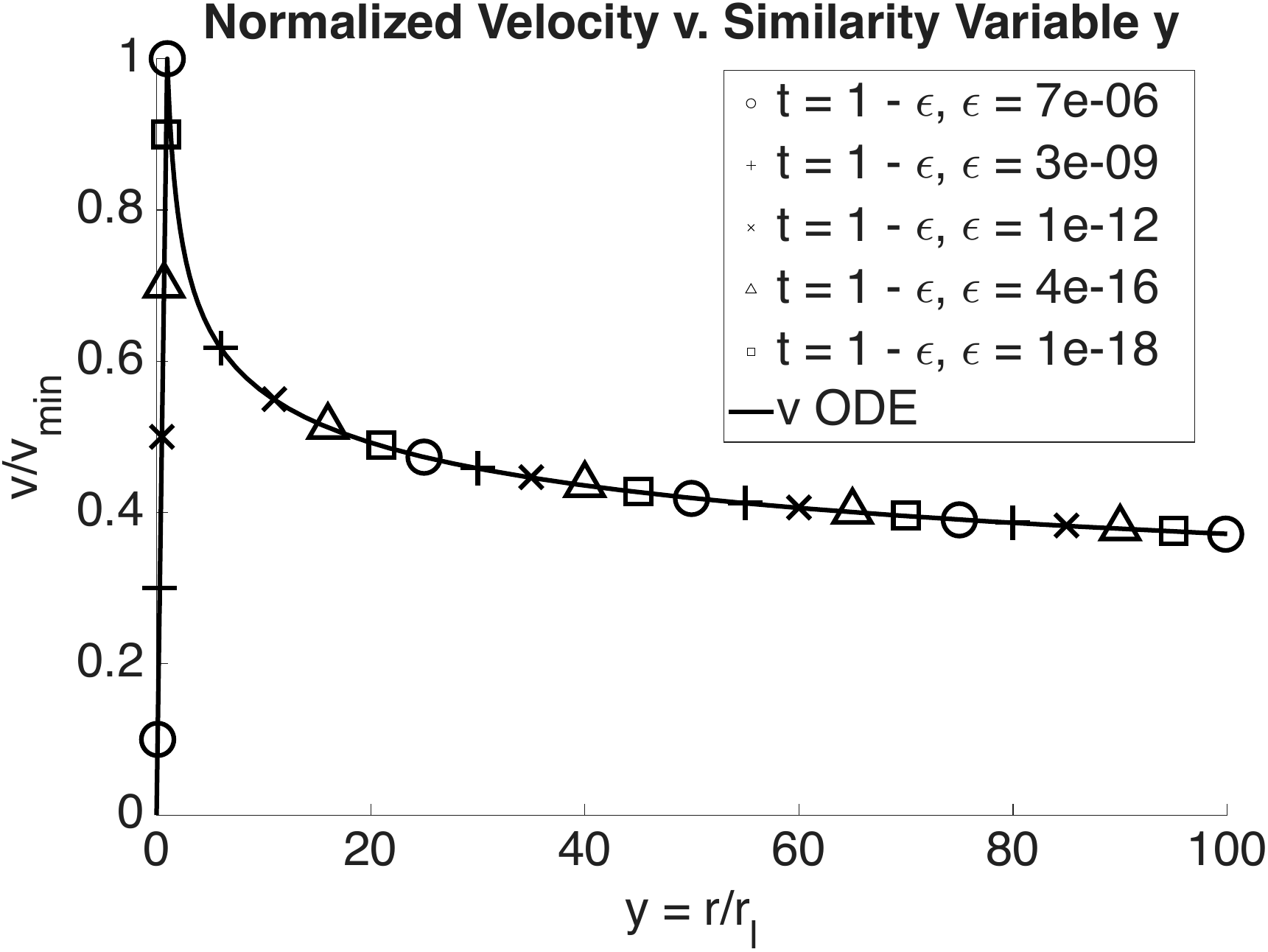}
         \caption{}
         \label{subfig:blowup_velocity_2D}
     \end{subfigure}
     \hfill
     \begin{subfigure}[h]{0.49\linewidth}
         \centering
         \includegraphics[width=\textwidth]{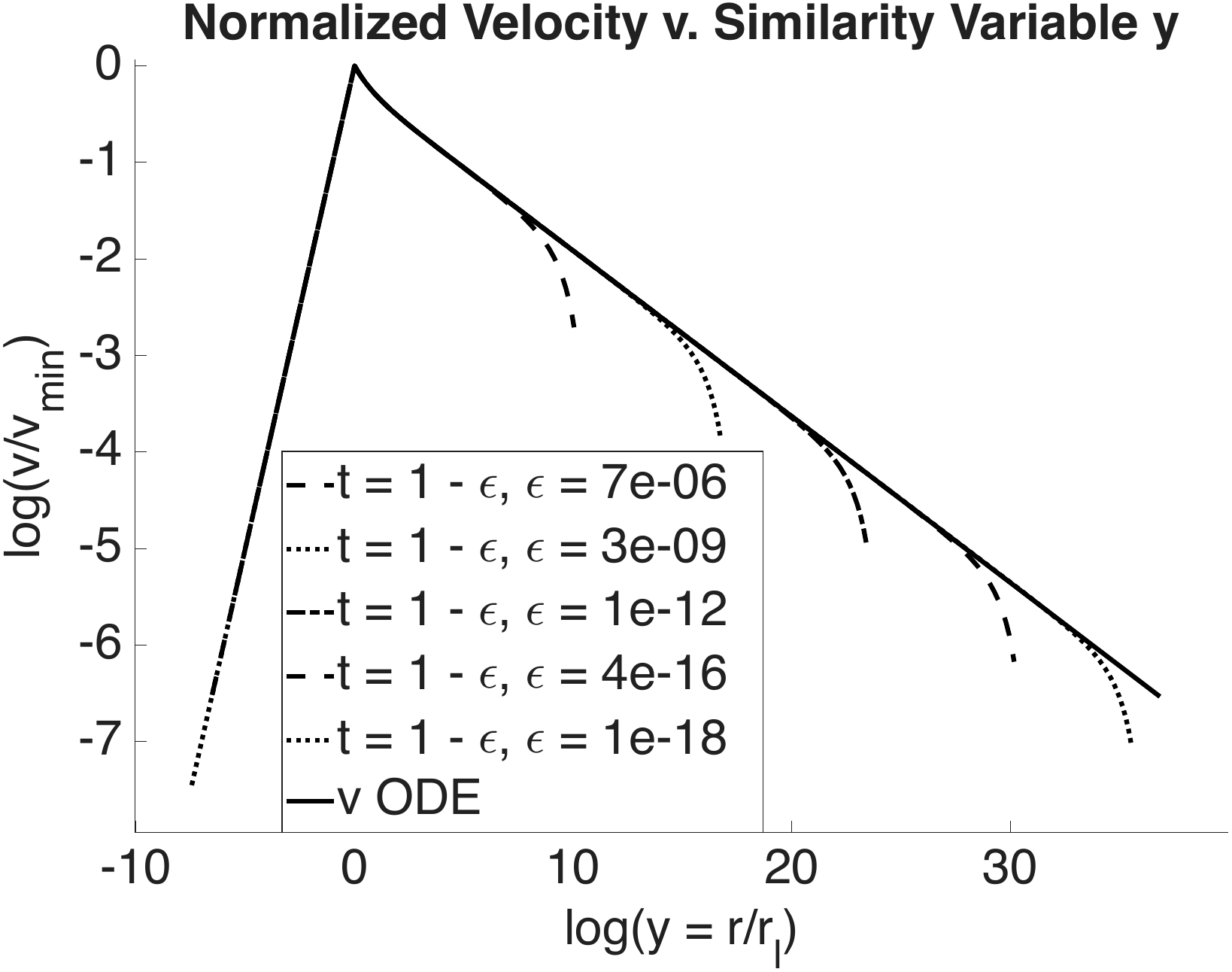}
         \caption{}
         \label{subfig:blowup_velocity_log_2D}
     \end{subfigure}
        \caption{Blowup profiles in 2D with respect to the similarity variable $y = \frac{r}{r_l(t)}$ near $t = 1$ for density in subfigures \ref{subfig:blowup_density_2D} and for velocity in \ref{subfig:blowup_velocity_2D} and in a log-log scale in subfigures \ref{subfig:blowup_density_log_2D} for density and \ref{subfig:blowup_velocity_log_2D} for velocity. The profiles are compared to $\rhohat$ and $\vhat$ solving ODE system \eqref{eq:ss_system}. The selected times represent the solution at $\max{\rho(r,t)} = 10$, $10^2$, $10^3$, $10^4$, $10^5$ respectively.}
        \label{fig:2D_blowup}
\end{figure*}

\begin{figure*}[pos=h]
     \centering
     \begin{subfigure}[h]{0.49\linewidth}
         \centering
         \includegraphics[width=\textwidth]{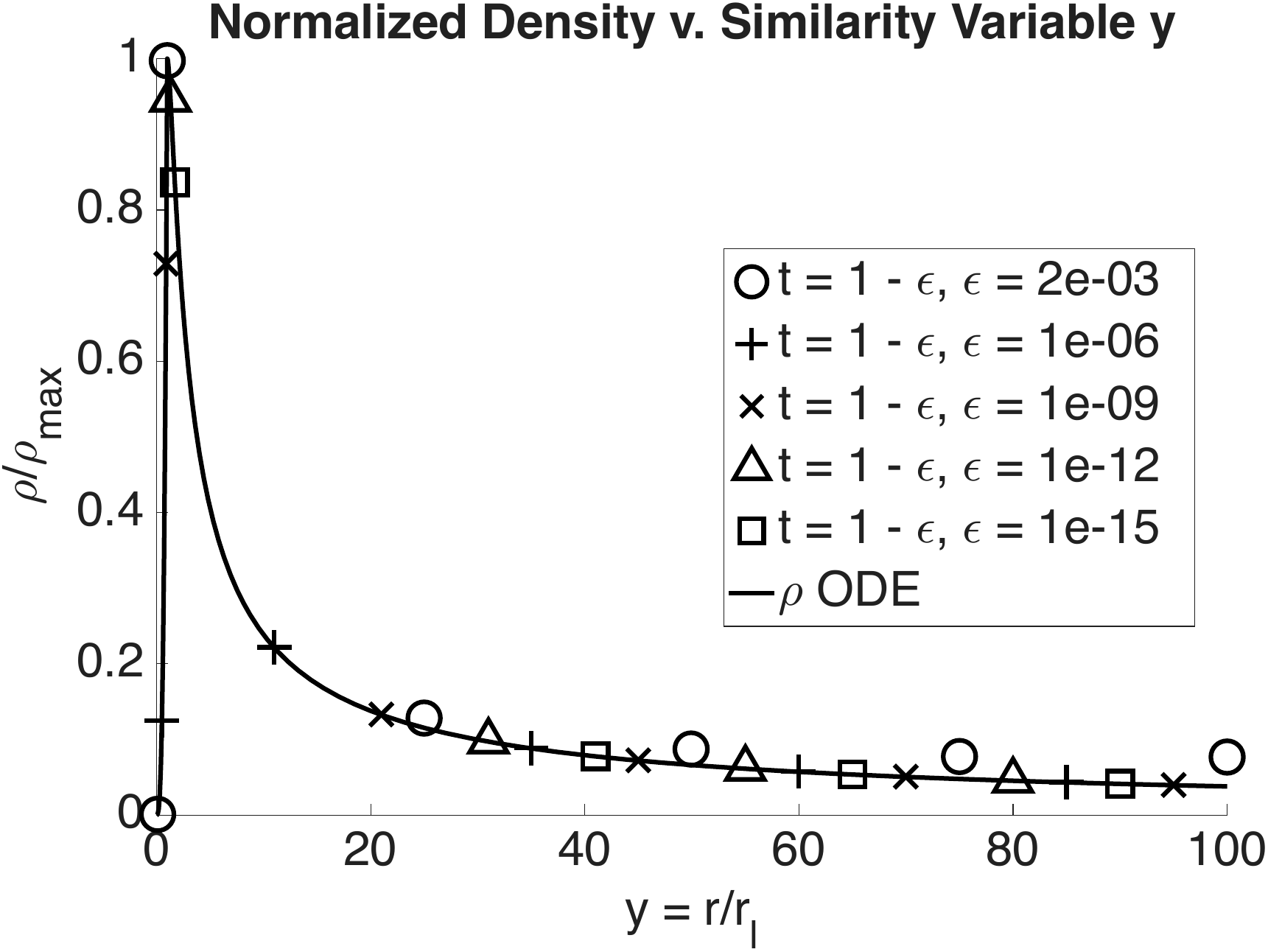}
         \caption{}
         \label{subfig:blowup_density_3D}
     \end{subfigure}
     \hfill
     \begin{subfigure}[h]{0.49\linewidth}
         \centering
         \includegraphics[width=\textwidth]{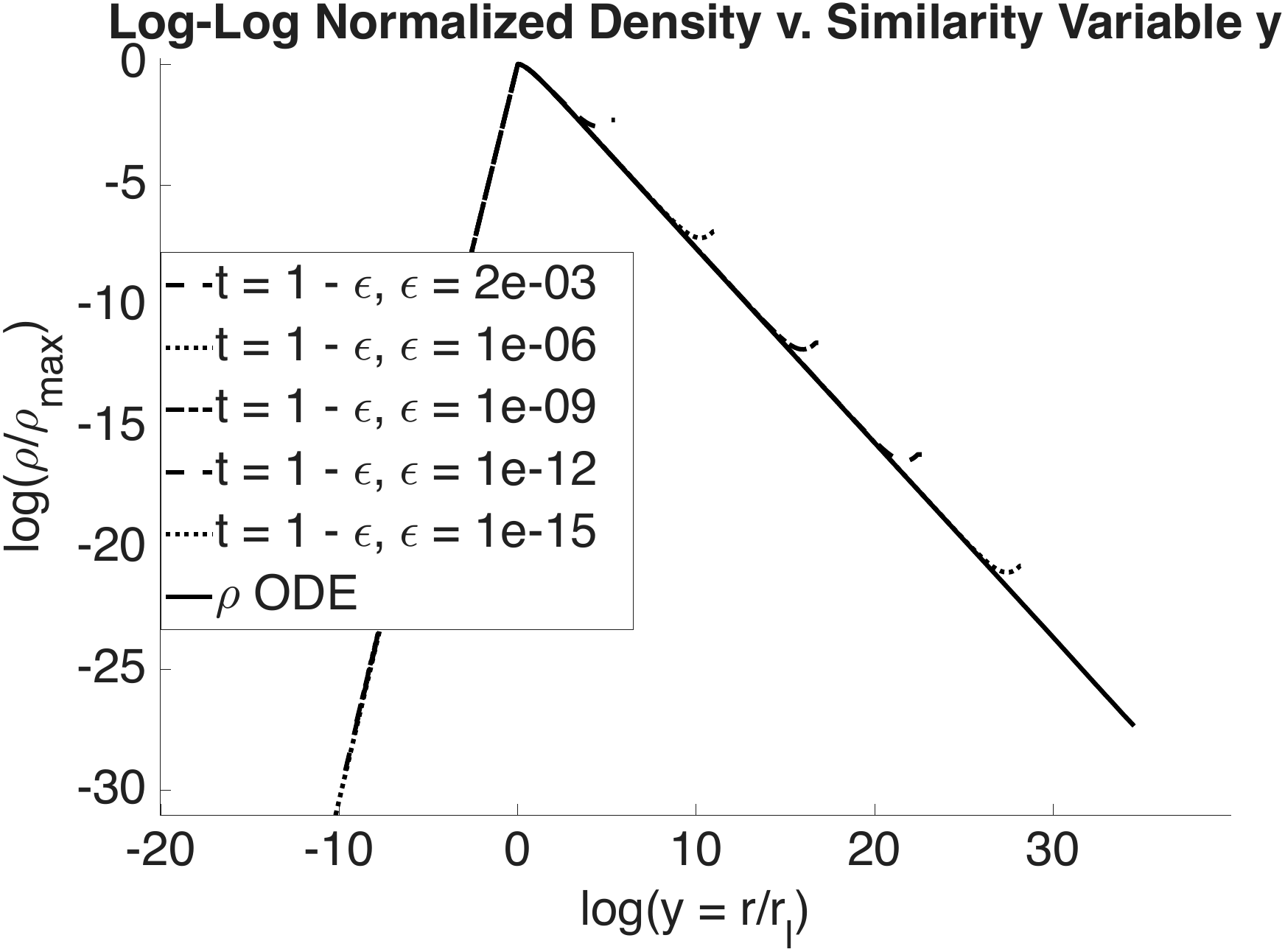}
         \caption{}
         \label{subfig:blowup_density_log_3D}
    \end{subfigure}
    \vskip\baselineskip
    \begin{subfigure}[h]{0.49\linewidth}
         \centering
         \includegraphics[width=\textwidth]{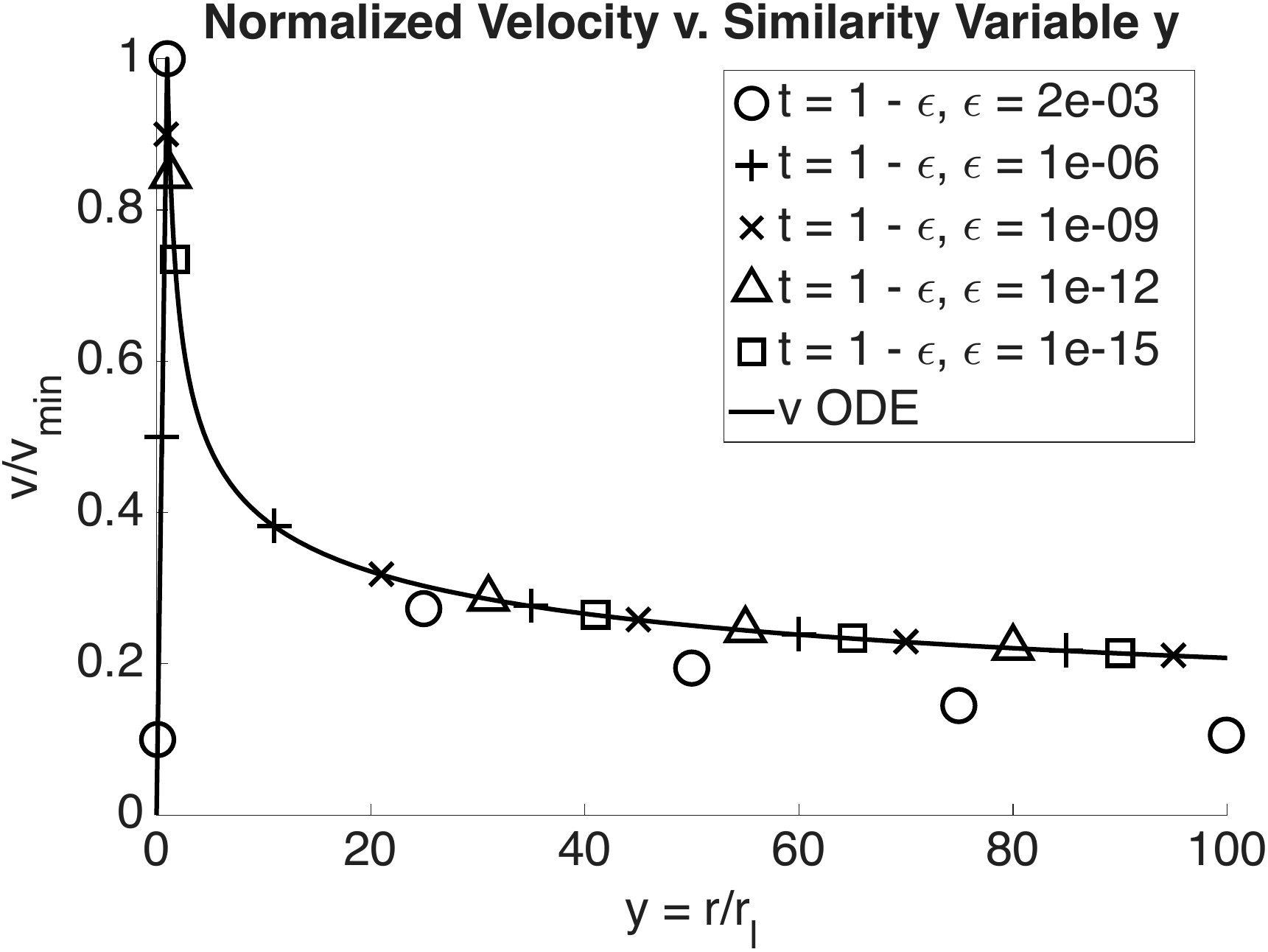}
         \caption{}
         \label{subfig:blowup_velocity_3D}
     \end{subfigure}
     \hfill
     \begin{subfigure}[h]{0.49\linewidth}
         \centering
         \includegraphics[width=\textwidth]{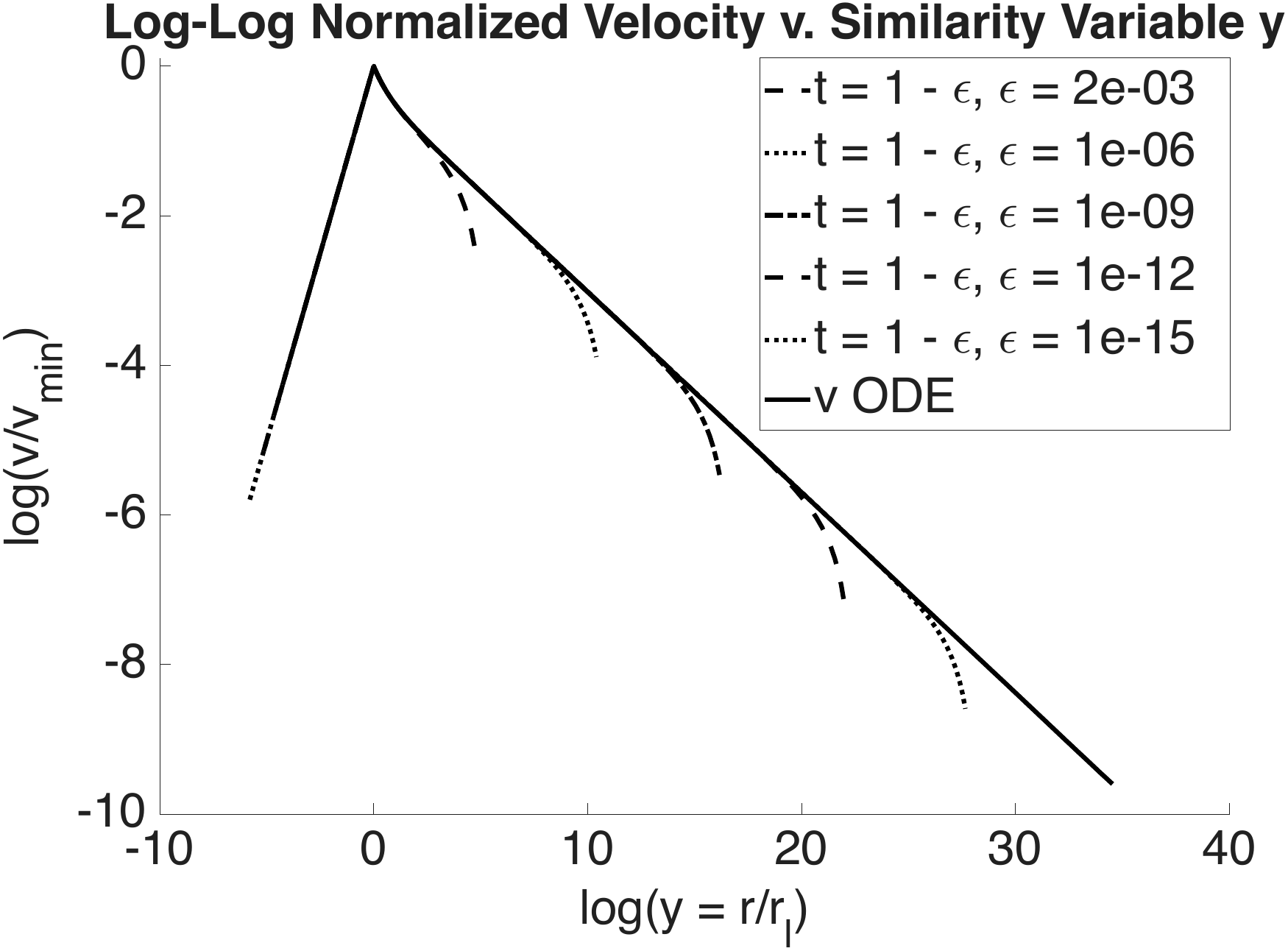}
         \caption{}
         \label{subfig:blowup_velocity_log_3D}
     \end{subfigure}
        \caption{Blowup profiles in 3D with respect to the similarity variable $y = \frac{r}{r_l(t)}$ near $t = 1$ for density in subfigures \ref{subfig:blowup_density_3D} and for velocity in \ref{subfig:blowup_velocity_3D} and in a log-log scale in subfigures \ref{subfig:blowup_density_log_3D} and \ref{subfig:blowup_velocity_log_3D}. The profiles are compared to $\rhohat$ and $\vhat$ solving ODE system \eqref{eq:ss_system}. The selected times represent the solution at $\max{\rho(r,t)} = 10$, $10^3$, $10^5$, $10^7$, $10^9$ respectively.}
        \label{fig:3D_blowup}
\end{figure*} 

\begin{table}[pos=h]
\begin{tabular}{|c|c|c|c|}
\hline
$n$ & $\max{\rho(r,t)}$ & \% Density Error & \% Velocity Error \\ \hline
$1$ & $10^1$            & $0.43 \%$         & $0.17\%$          \\ \hline
$1$ & $10^2$            & $0.35\%$         & $0.11\%$          \\ \hline
$1$ & $10^3$            & $0.30\%$         & $0.09\%$          \\ \hline
$1$ & $10^4$            & $0.27\%$         & $0.07\%$          \\ \hline
$1$ & $10^5$            & $0.26\%$         & $0.06\%$          \\ \hline
$2$ & $10^1$            & $0.98\%$         & $0.31\%$          \\ \hline
$2$ & $10^3$            & $0.12\%$         & $0.18\%$          \\ \hline
$2$ & $10^5$            & $0.08\%$         & $0.13\%$          \\ \hline
$2$ & $10^7$            & $0.06\%$         & $0.11\%$          \\ \hline
$2$ & $10^9$            & $0.05\%$         & $0.12\%$          \\ \hline
\end{tabular}
\caption{Percent $L^1$ error \eqref{eq:L1_norm} between computed $\rho$ and $v$ near implosion time to self-similar profiles $\rhohat$ and $\vhat$. The self-similar profiles $\rhohat$ and $\vhat$ are solutions to \eqref{eq:ss_system} and are shown in Figure \ref{fig:hat}. Percent error is computed with respect to the similarity variable $y$. The interval of consideration is $r_l(t) \leq r \leq 5r_l(t)$ or equivalently $1 \leq y \leq 5$.}
\label{tab:ss_error}
\end{table}

\par Subfigures \ref{subfig:blowup_density_2D} and \ref{subfig:blowup_density_3D} show how the data coincides with $\rhohat$ when plotted against $y$ for density and similarly for $\vhat$ in \ref{subfig:blowup_velocity_2D} and \ref{subfig:blowup_velocity_3D}. The agreement is quantified in Table \ref{tab:ss_error} where the percent $L^1$ error between computed and normalized density and velocity are compared to $\rhohat$ and $\vhat$ which solve ODE system \eqref{eq:ss_system}. The percent $L^1$ error is less than a percent for both density and velocity for all times. Furthermore, as time gets closer to implosion time, the percent $L^1$ error decreases indicating convergence of numerical data to the $\rhohat$ and $\vhat$ profiles. The higher percent $L^1$ errors in density and velocity in the case of $n = 2$, $\max{\rho(r,t)} = 10$ relative to the other errors indicate the solution is still far away from the asymptotic limit. Given the profiles $\rhohat$ and $\vhat$ only explain the solution dynamics near $r_l(t)$, we do not expect the numerically computed solutions to adhere to $\rhohat$ and $\vhat$ for all $y$ and especially for $y$ corresponding to $r = r_r(t)$. This also justifies the choice of interval to compute the percent $L^1$ error to be a small neighborhood around $r_l(t)$: $[r_l(t),5r_l(t)]$. 

\par The adherence and departure from $\rhohat$ and $\vhat$ is further seen in the log-log plots for density in subfigures \ref{subfig:blowup_density_log_2D} and \ref{subfig:blowup_density_log_3D} and velocity in subfigures \ref{subfig:blowup_velocity_log_2D} and \ref{subfig:blowup_velocity_log_3D}. In the log-log plots, the data adheres to $\rhohat$ and $\vhat$ up until some $y$, at which it peels off from the profile. The departure from the profile indicates the data entering the region sufficiently far away from $r = r_l(t)$. The accurate prediction $\rhohat$ and $\vhat$ provide compared to numerical simulations close to the predicted implosion time gives numerical evidence that $\rhohat$ and $\vhat$ are the asymptotic limit of the solution up to implosion time. 

\begin{figure*}[pos=h]
     \centering
     \begin{subfigure}[h]{0.495\linewidth}
         \centering
         \includegraphics[width=\textwidth]{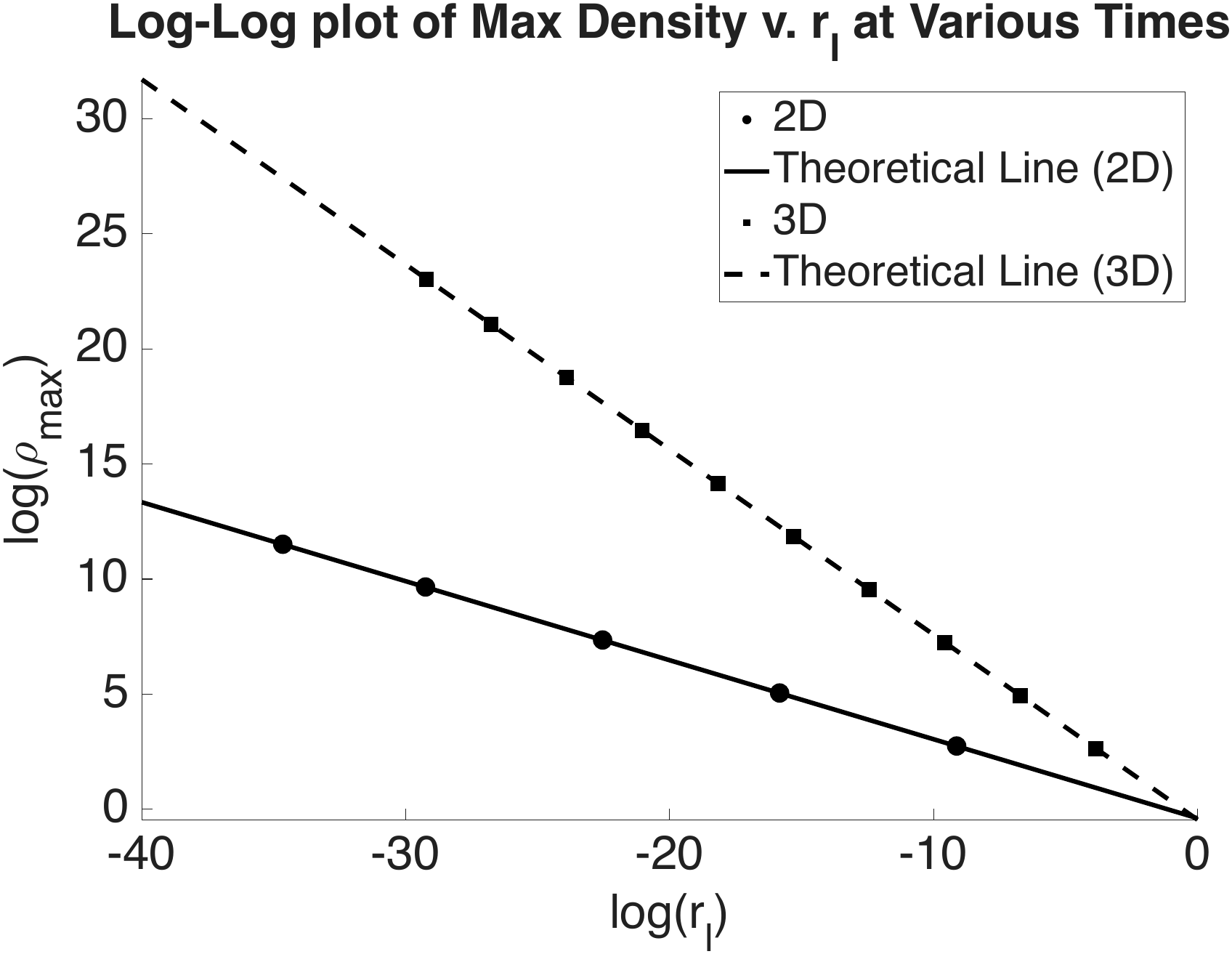}
         \caption{Density}
         \label{subfig:scaling_density}
     \end{subfigure}
     \hfill
     \begin{subfigure}[h]{0.495\linewidth}
         \centering
         \includegraphics[width=\textwidth]{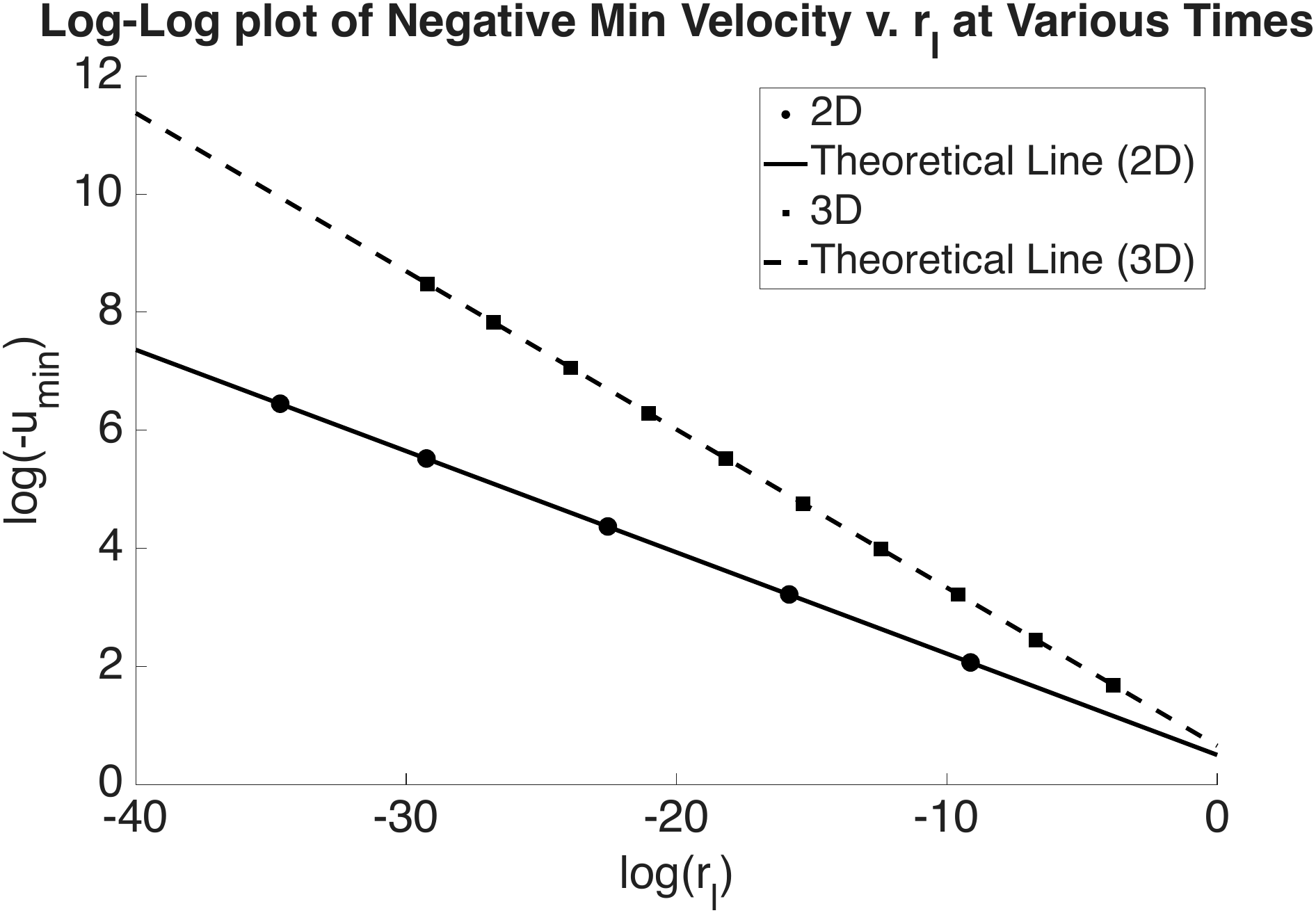}
         \caption{Velocity}
         \label{subfig:scaling_velocity}
     \end{subfigure}
        \caption{Log-log plots of maximum density and negative minimum velocity with respect to $r_l(t)$ at various times. The expected power law relationship \eqref{eq:power_law_tau_r_l} between $r_l$ and density and velocity is plotted and represented as a line. Note no fitting parameters are given in \eqref{eq:power_law_tau_r_l}. The data points fit exactly onto the line, verifying the expected power law relationship.}
        \label{fig:scaling}
\end{figure*}

To verify whether the numerical simulation accurately captures the time-scaling behavior, we check whether the numerical simulation correctly evolves $\tau(t)$ near implosion time. We check this by determining the evolution of the maximum and negative minimum of density and velocity respectively against $r_l(t)$. From the definitions of $\tau(t)$ \eqref{eq:tau} and $r_l(t)$ \eqref{eq:r_l} and the relationship between them, the power law relationship of $\rho(r_l(t),t)$ and $-v(r_l(t),t)$ in terms of $r_l(t)$  is
\begin{equation}\label{eq:power_law_tau_r_l}
\begin{split}
        \rho(r_l(t),t) &= \rhohat(1)\left(\frac{r_l(t)}{r^* 2^{\frac{1 - (n+1)^{-1/{2}}}{2}} }\right)^{-(n+1)(s-1)} , \\
        -v(r_l(t),t) &= \vhat(1)\left(\frac{r_l(t)}{r^* 2^{\frac{1 - (n+1)^{-1/{2}}}{2}} }\right)^{-(s-1)}.
\end{split}
\end{equation} 
We compute $r_l(t)$ from the data by defining $r_l(t) = \argmin_r v(r,t)$. Log-log plots of the extrema versus $r_l(t)$ are given in Figure \ref{fig:scaling}, with density in Subfigure \ref{subfig:scaling_density} and velocity in Subfigure \ref{subfig:scaling_velocity}. The expected power law behavior between $r_l(t)$ and $\tau(t)$ from \eqref{eq:power_law_tau_r_l} is also plotted. We see for both the density and velocity in 2D and 3D the extrema value match directly onto the theoretical power law with correct prefactor, verifying the numerical simulation accurately captures the scaling dynamics between $r_l(t)$ and $\tau(t)$.

\subsection{Blowup in $L^p_{[0,1]}$}
\begin{figure*}[pos=h]
     \centering
     \begin{subfigure}[h]{0.49\linewidth}
         \centering
         \includegraphics[width=\textwidth]{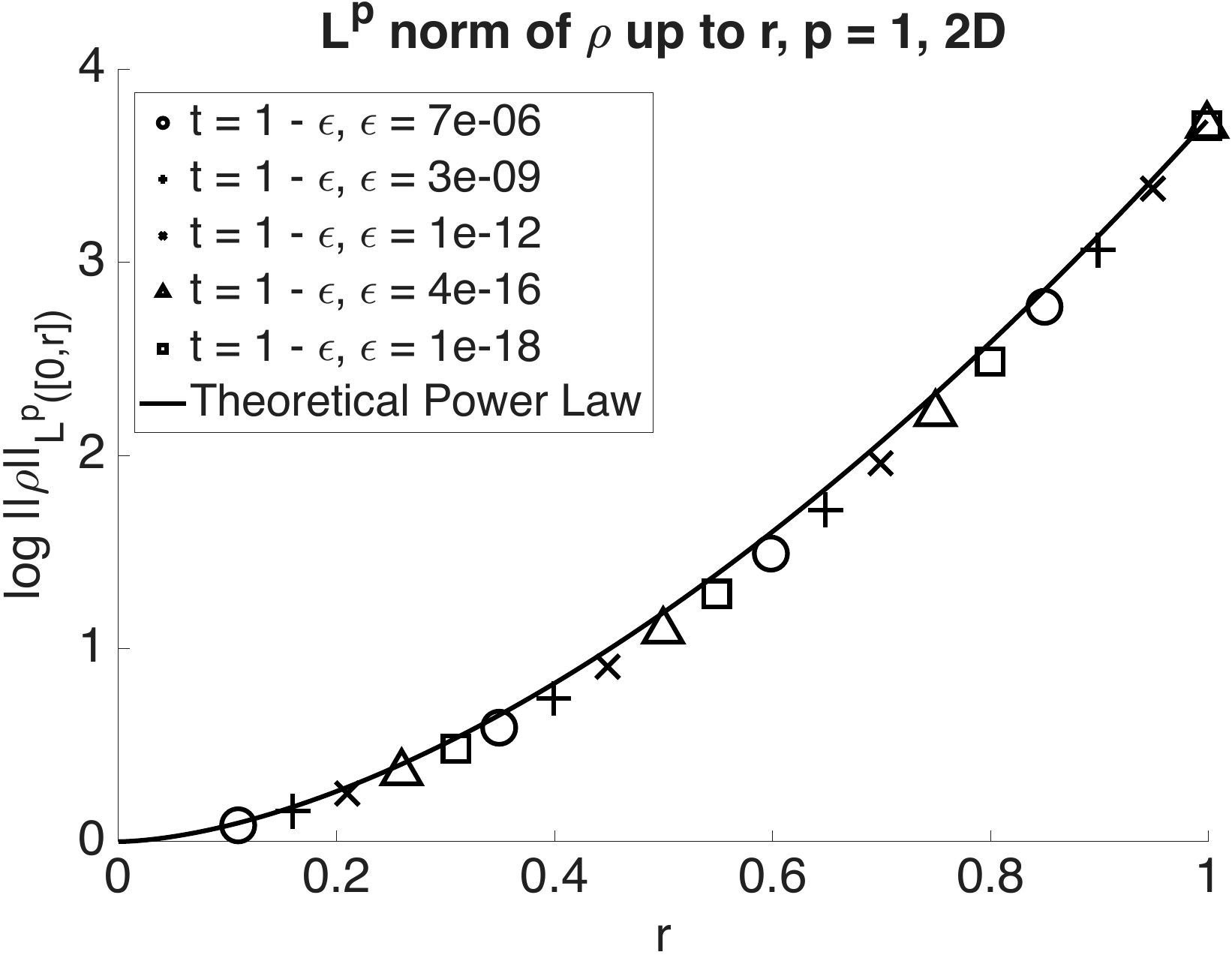}
         \caption{Mass in $2$D}
         \label{subfig:mass_2D}
     \end{subfigure}
     \hfill
     \begin{subfigure}[h]{0.49\linewidth}
         \centering
         \includegraphics[width=\textwidth]{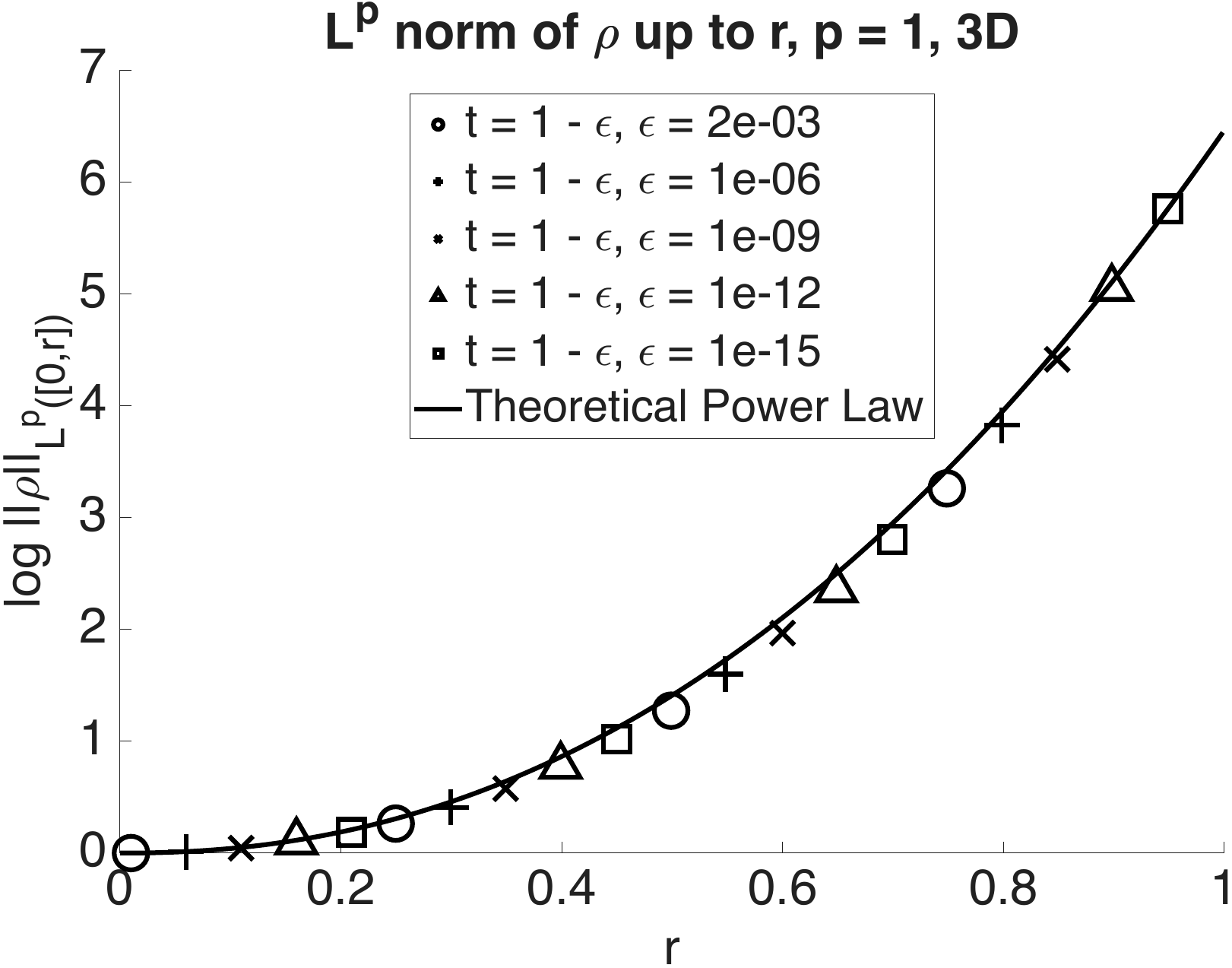}
         \caption{Mass in $3$D}
         \label{subfig:mass_3D}
    \end{subfigure}
    \vskip\baselineskip
    \begin{subfigure}[h]{0.49\linewidth}
         \centering
         \includegraphics[width=\textwidth]{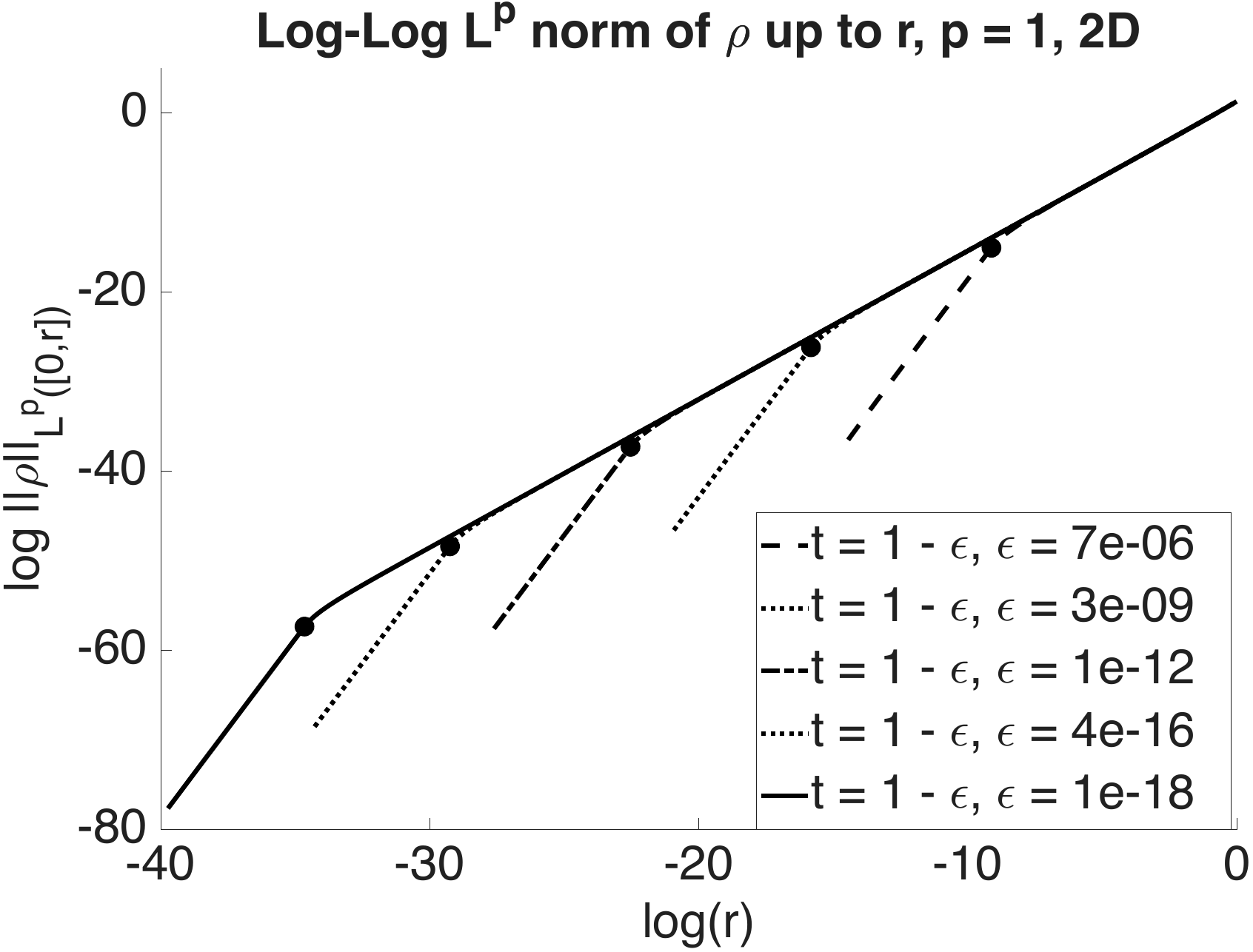}
         \caption{Log Mass in $2$D}
         \label{subfig:mass_2D_log}
     \end{subfigure}
     \hfill
     \begin{subfigure}[h]{0.49\linewidth}
         \centering
         \includegraphics[width=\textwidth]{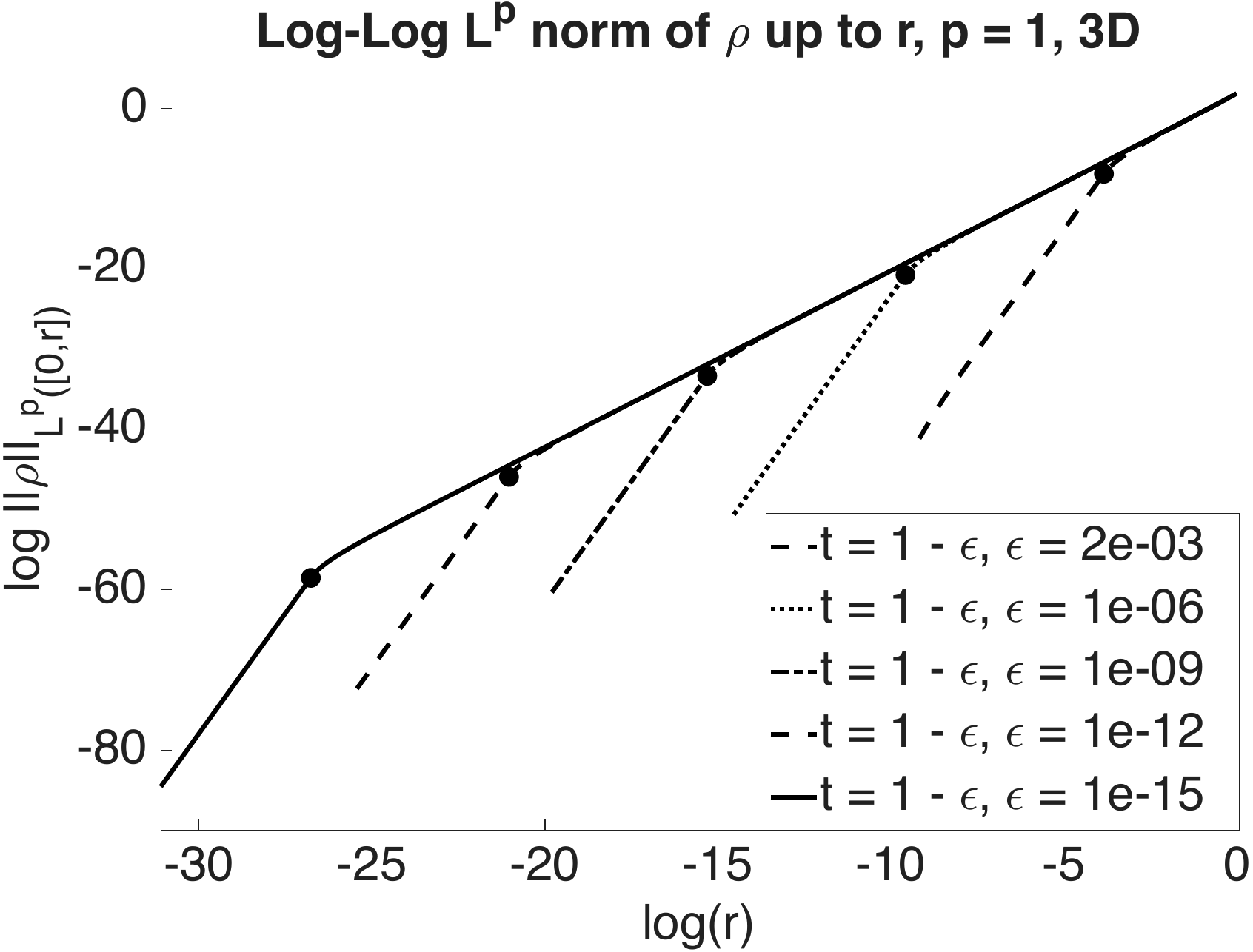}
         \caption{Log Mass in $3$D}
         \label{subfig:mass_3D_log}
     \end{subfigure}
        \caption{The $L^1$ norm of $\rho$ on $[0,r]$ for $2$D and $3$D. Subfigures \ref{subfig:mass_2D} and \ref{subfig:mass_3D} show the radial dependence on mass while subfigures \ref{subfig:mass_2D_log} and \ref{subfig:mass_3D_log} are in log-log scale. The selected times are the same as in Figures \ref{fig:2D_blowup} and \ref{fig:3D_blowup}. A power law curve is plotted in Subfigures \ref{subfig:mass_2D} and \ref{subfig:mass_3D} whose exponent is given by \eqref{eq:Lp_power_law}. The constant is fitted from the numerical data. Black dots are plotted in Subfigures \ref{subfig:mass_2D_log} and \ref{subfig:mass_3D_log} representing $r_l(t)$ at various times.}
        \label{fig:mass}
\end{figure*}

\begin{table}[pos=h]
    \centering
    \begin{tabular}{|c|c|c|c|}
    \hline
     Dimension & $p$ & Numerical & Theoretical \\
    \hline
        $2$ & $1$ & $1.6651$ & $1.6569$ \\ 
    \hline
        $2$ & $\gamma$ & $0.6654$ & $0.6569$ \\
    \hline
        $3$ & $1$ & $2.2070$ & $2.1962$ \\ 
    \hline
        $3$ & $\gamma$ & $1.0087$ & $0.9962$ \\
    \hline
    \end{tabular}
    \caption{Comparison between the slope of numerically computed $\log{\Vert \rho(r,t)\Vert_{L^p\left([0,r]\right)}}$ against $\log{r}$, shown in Subfigures \ref{subfig:mass_2D_log}, \ref{subfig:mass_3D_log}, \ref{subfig:pgamma_2D_log}, and \ref{subfig:pgamma_3D_log} for various $p < p_{\crit}$, to the theoretical power law scaling given in \eqref{eq:Lp_power_law}. The slope is numerically computed for $2r_l(t) \leq r \leq 1$.}
    \label{tab:Lp_slope}
\end{table}

\begin{figure*}[pos=h]
     \centering
     \begin{subfigure}[h]{0.495\linewidth}
         \centering
         \includegraphics[width=\textwidth]{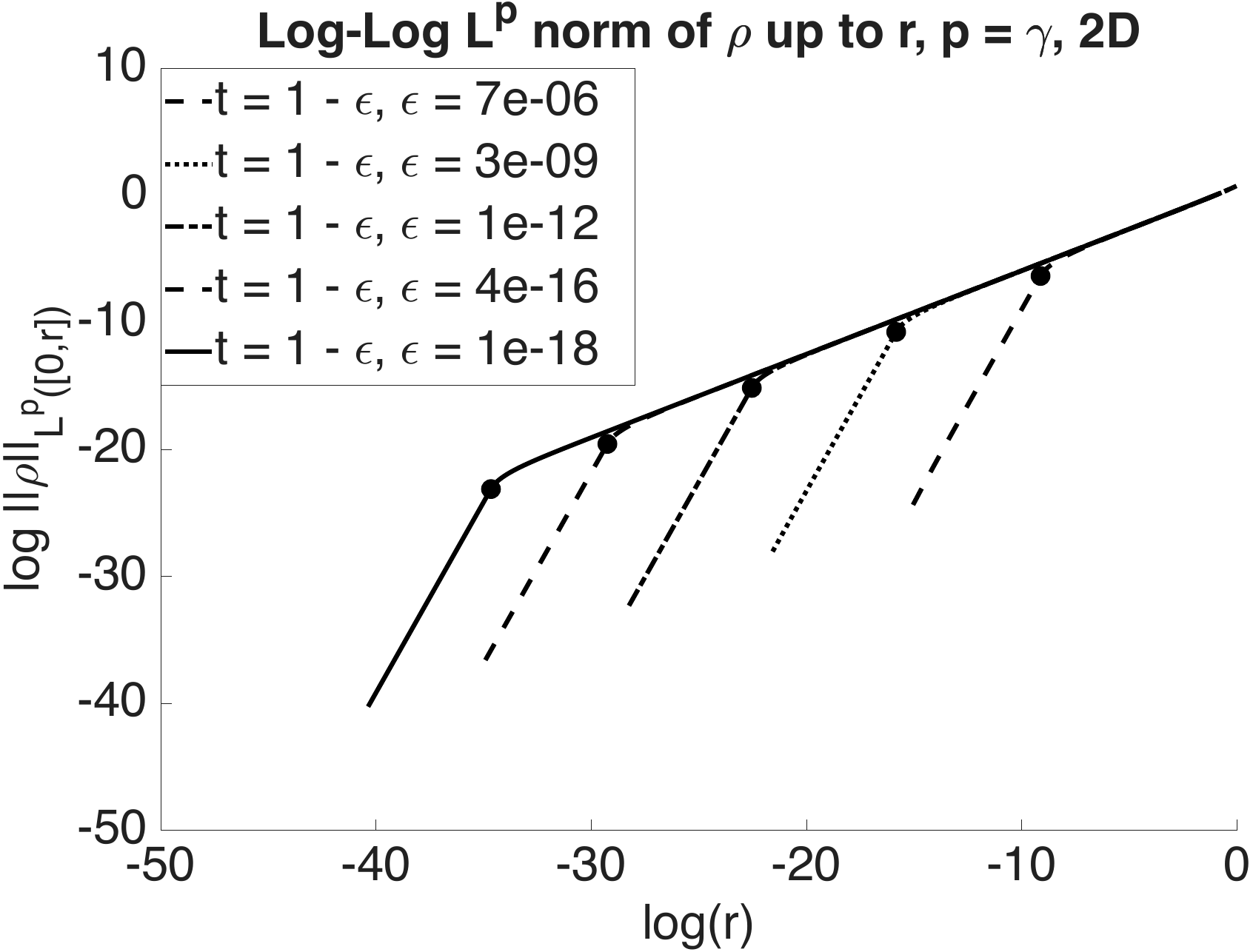}
         \caption{2D}
         \label{subfig:pgamma_2D_log}
     \end{subfigure}
     \hfill
     \begin{subfigure}[h]{0.495\linewidth}
         \centering
         \includegraphics[width=\textwidth]{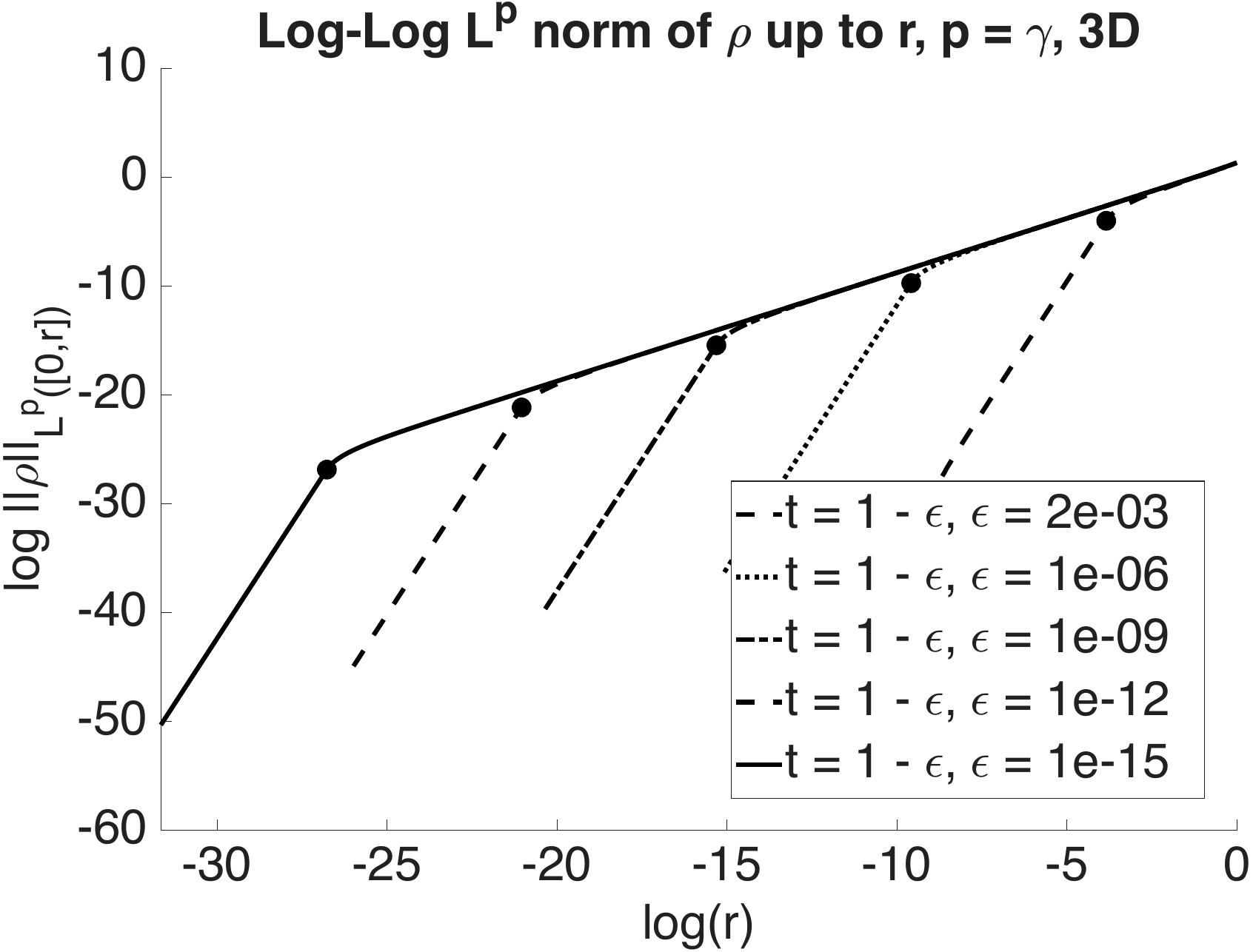}
         \caption{3D}
         \label{subfig:pgamma_3D_log}
     \end{subfigure}
        \caption{The log-log plot of the $L^p$ norm of $\rho$ on $[0,r]$ in $2$D (Subfigure \ref{subfig:pgamma_2D_log}) and $3$D (Subfigure \ref{subfig:pgamma_3D_log}) for $p = \gamma$. Black dots represent $r_l(t)$ at various times.}
        \label{fig:pgamma}
\end{figure*}

\par We conclude by computing the $L^p$ norm of density over $[0,1]$ for various choices of $p$ to investigate the theoretical work done in Subsection \ref{subsec:L^p}. The $L^1_{[0,1]}$ norm, or the mass, is shown in Figure \ref{fig:mass}. From Subfigures \ref{subfig:mass_2D} and \ref{subfig:mass_3D}, we observe no evidence of mass concentration as the mass seems to depend on $r$ as a power law, shown by the adherence of data points to a power law curve with exponent \eqref{eq:Lp_power_law}. This is further supported by the log-log plots of Subfigures \ref{subfig:mass_2D_log} and \ref{subfig:mass_3D_log} where the data collapses onto the same line past $r_l(t)$. Moreover, the slope of this line is approximately the exponent predicted in \eqref{eq:Lp_power_law}, displayed in Table \ref{tab:Lp_slope}. The black dots in Subfigures \ref{subfig:mass_2D_log} and \ref{subfig:mass_3D_log} indicate the location of $r_l(t)$ for each instance of time. For each data curve, we find the logarithm of the mass collapses to the same line when $r > r_l(t)$. The derivation of \eqref{eq:Lp_power_law} assumes the power law behavior of $\rhohat(y)$ is valid when $y$ is sufficiently greater than $1$ - or equivalently when $r$ is sufficiently greater than $r_l(t)$. On the intermediate region slightly greater than $r_l(t)$, $\rhohat$ is not a power law which explains why the adherence of data to a common line occurs slightly past $r_l(t)$ and not exactly at $r_l(t)$. 
\par Similar behavior is found in Figure \ref{fig:pgamma}, which shows the log-log plot of the $L^\gamma$ norm over $[0,r]$ for solutions to the cutoff implosion problem in $2$D and $3$D. We see the same adherence to a common curve as in Subfigures \ref{subfig:mass_2D_log} and \ref{subfig:mass_3D_log}, showing no blow up in $L^\gamma$, as implied by the global existence result in \cite{Chen_2022}. The slope of this common line is tabulated in Table \ref{tab:Lp_slope} and matches the predicted value in \eqref{eq:Lp_power_law}. 

\begin{figure}[pos=h]
    \centering
    \includegraphics[width=0.95\linewidth]{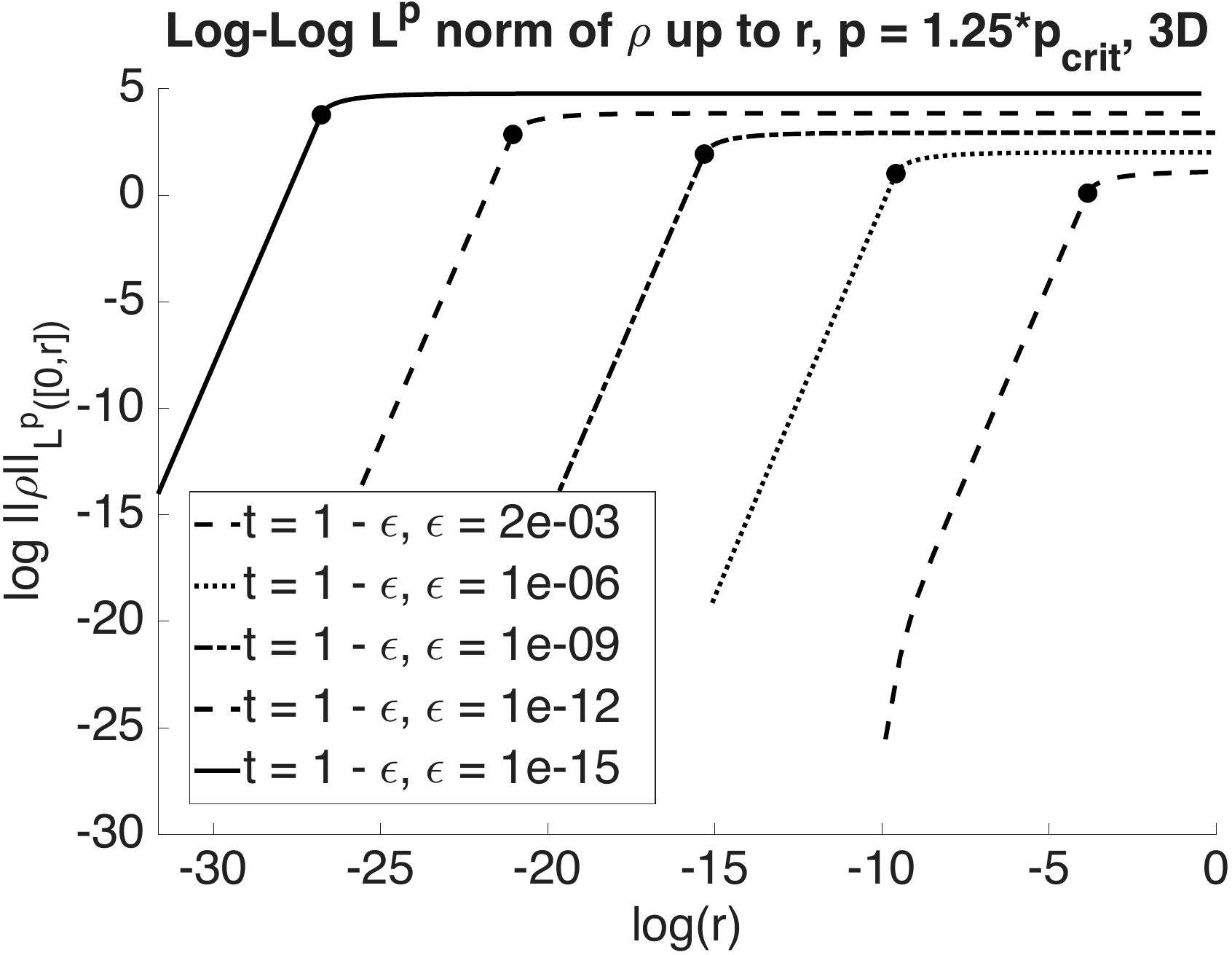}
    \caption{Log-log plot of $L^p$ norm of $\rho$ on $[0,r]$ in $3$D for $p = 1.25p_{\crit}$, given by \eqref{eq:p_crit}. Black dots represent $r_l(t)$ at various times.}
    \label{fig:p_crit_over}
\end{figure}

\par In contrast, an example of $L^p$ blowup is shown in Figure \ref{fig:p_crit_over} where $p$ is set to $1.25p_{\crit}$ and $p_{\crit}$ is given by \eqref{eq:p_crit} for $n = 2$. The $L^p$ norm behaves drastically different when $p > p_{\crit}$. The $L^p$ norm dramatically increases with time (note Figure \ref{fig:p_crit_over} is in log-log scale) and does not collapse to any common curve in log-log scale. Figure \ref{fig:p_crit} shows an example of how the $L^p$ norm behaves when $p$ is exactly $p_{\crit}$. Instead of collapsing onto a power law curve, the $L^p$ norm increases for very small $r$ and then tapers off for larger $r$. As time progresses, the $L^p$ norm also increases at a given $r$, which could imply concentration formation. The growth of the $L^p$ norm in time is less dramatic than the $p = 1.25p_{\crit}$ example however.

\begin{figure*}[pos=h]
     \centering
     \begin{subfigure}[h]{0.495\linewidth}
         \centering
         \includegraphics[width=\textwidth]{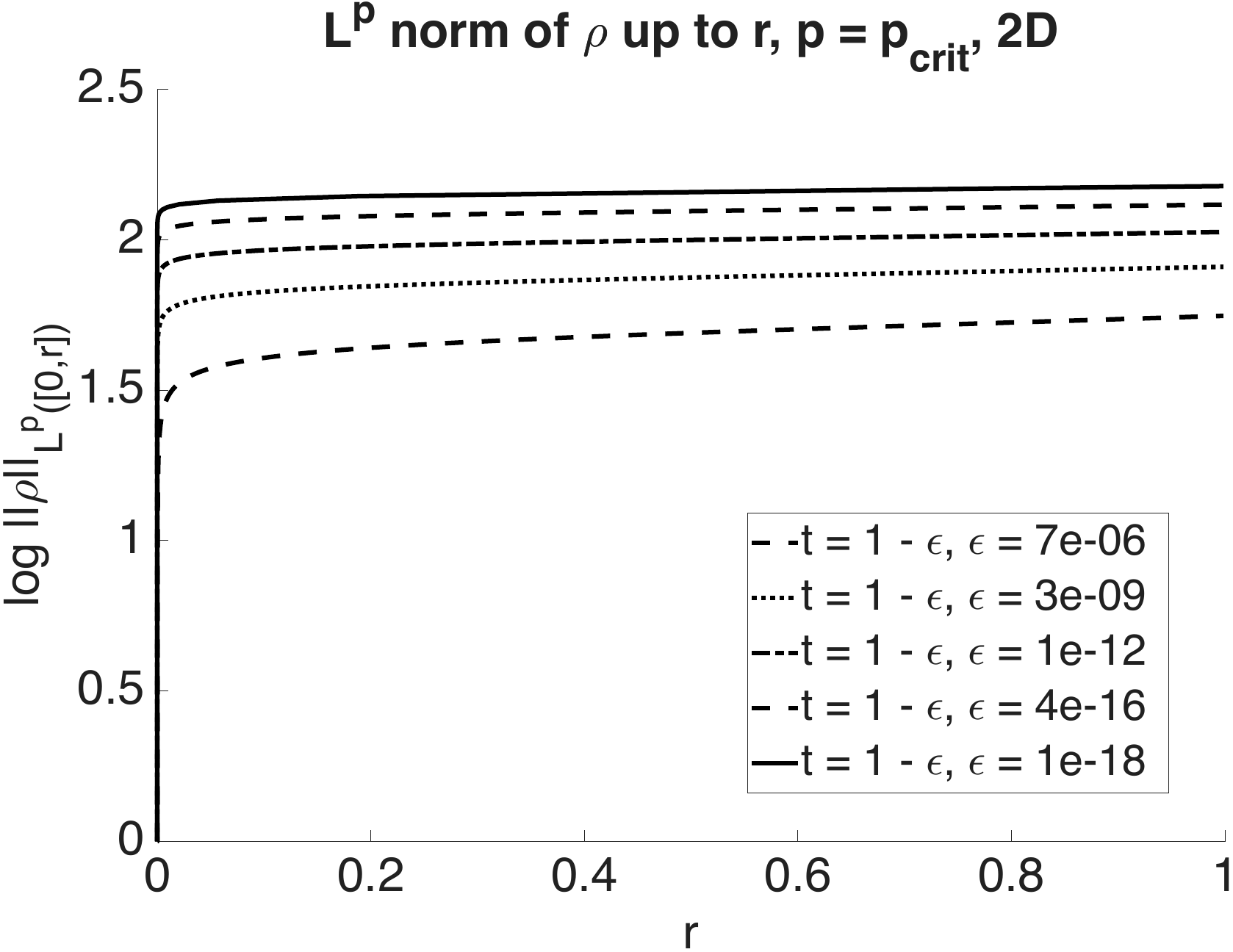}
         \caption{$2$D}
         \label{subfig:p_crit_2D}
     \end{subfigure}
     \hfill
     \begin{subfigure}[h]{0.495\linewidth}
         \centering
         \includegraphics[width=\textwidth]{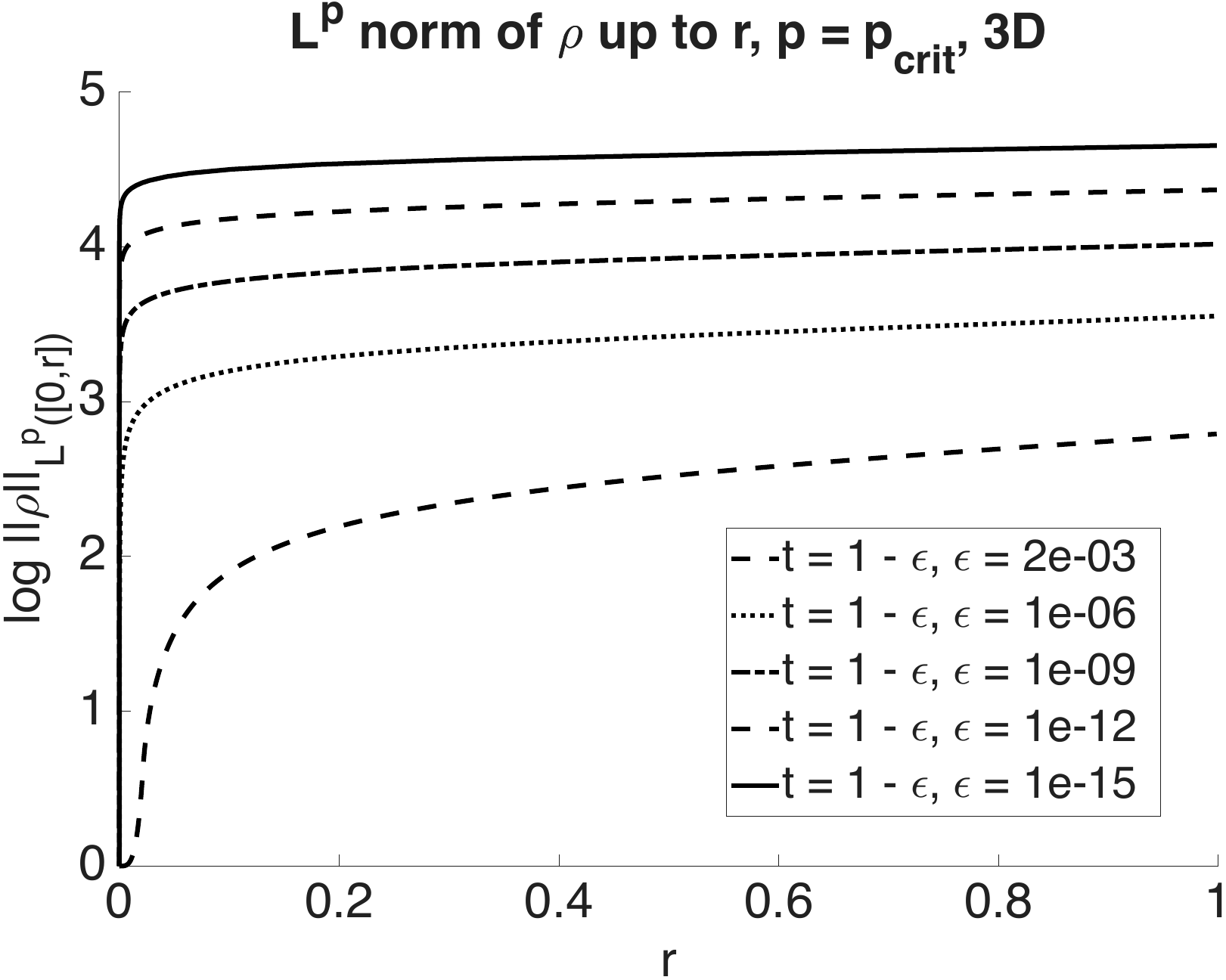}
         \caption{$3$D}
         \label{subfig:p_crit_3D}
     \end{subfigure}
     \vskip\baselineskip
     \begin{subfigure}[h]{0.495\linewidth}
         \centering
         \includegraphics[width=\textwidth]{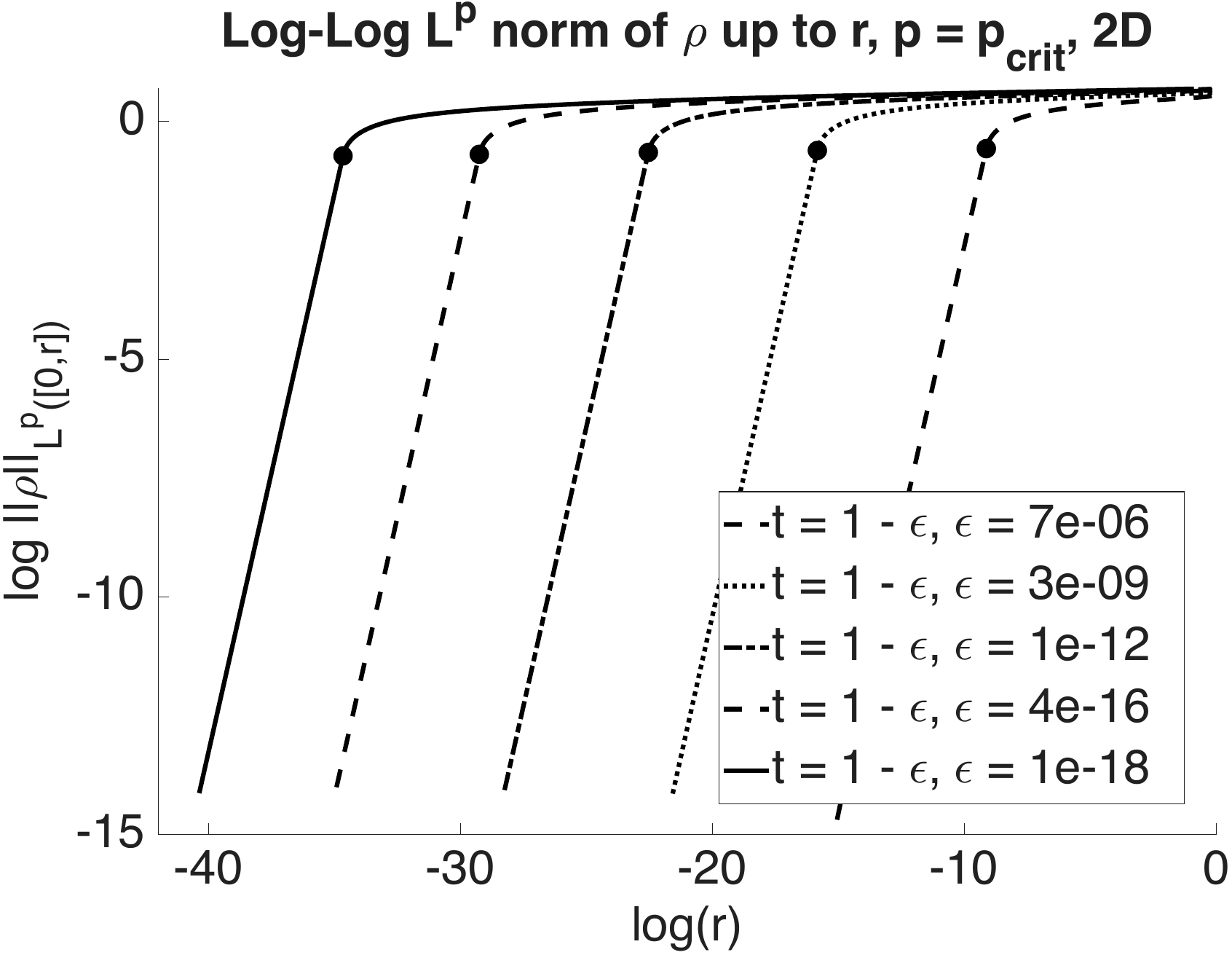}
         \caption{Log-Log $2$D}
         \label{subfig:p_crit_2D_log}
     \end{subfigure}
     \hfill
     \begin{subfigure}[h]{0.495\linewidth}
         \centering
         \includegraphics[width=\textwidth]{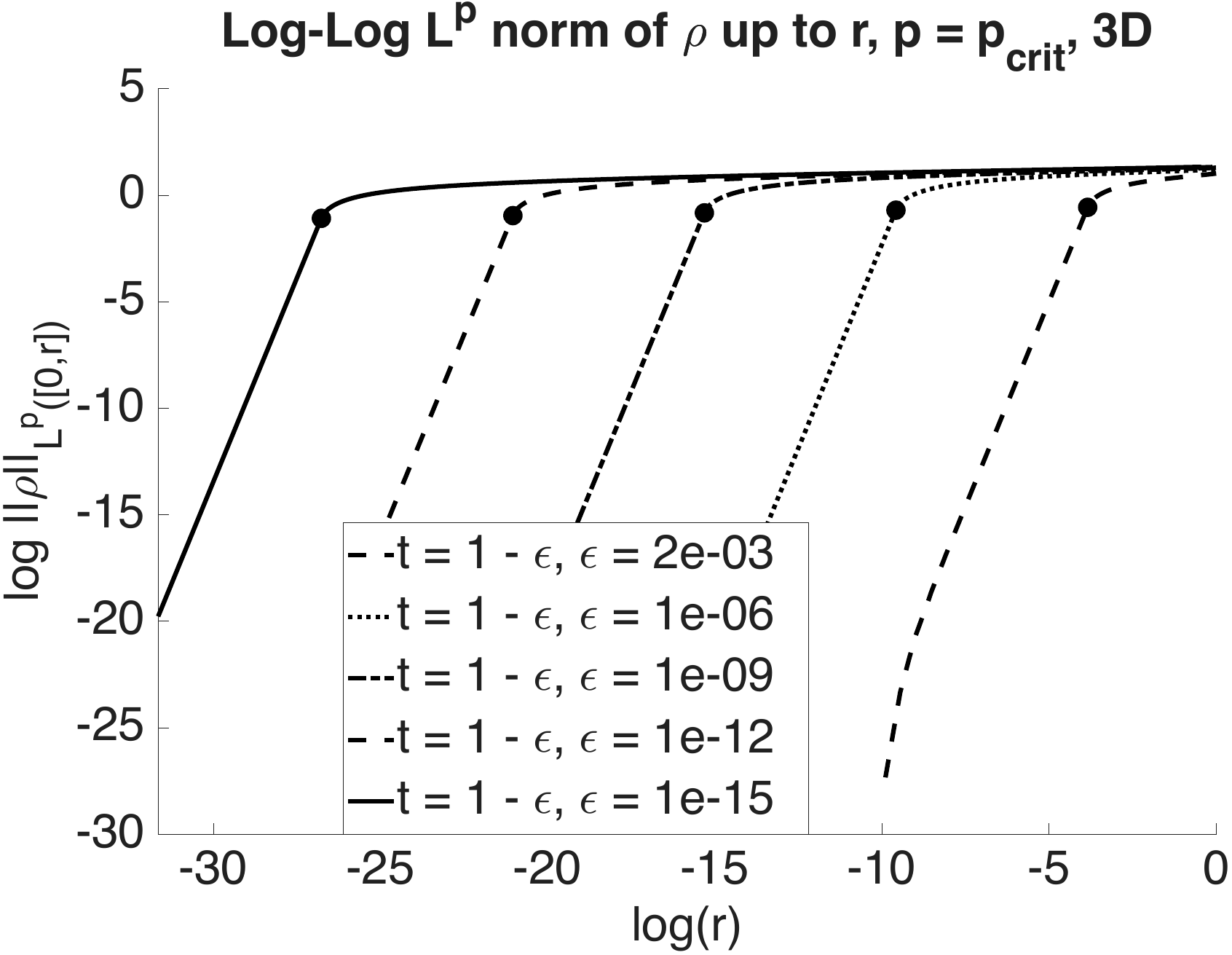}
         \caption{Log-Log $3$D}
         \label{subfig:p_crit_3D_log}
     \end{subfigure}     
        \caption{The $L^p$ norm of $\rho$ on $[0,r]$ in $2$D (Subfigure \ref{subfig:p_crit_2D}) and $3$D (Subfigure \ref{subfig:p_crit_3D}) for $p = p_{\crit}$, given by \eqref{eq:p_crit}. Subfigures \ref{subfig:p_crit_2D_log} and \ref{subfig:p_crit_3D_log} are in log-log scale. Black dots in subfigures \ref{subfig:p_crit_2D_log} and \ref{subfig:p_crit_3D_log} represent $r_l(t)$ at various times.}
        \label{fig:p_crit}
\end{figure*}

\par In the derivation of \eqref{eq:p_crit}, we found $L^p$ blowup occurs only because of the Kidder solution and non-power law behavior of $\rhohat$ near $r_l(t)$. We see from Subfigures \ref{subfig:p_crit_2D_log} and \ref{subfig:p_crit_3D_log} most of the growth is on the interval $[0, r_l(t)]$. In contrast, the $L^p$ norm does not grow much for $r$ much greater than $r_l(t)$ - supporting the observation the farfield power law behavior of $\rhohat$ does not contribute to $L^p$ blowup.

\section{Conclusion}\label{sec:conclusion}
In this work, we study the solution to the cutoff implosion problem with initial data formed from the Kidder solution inside a specied radius, and constrant pressure outside that radius, with the radius and the constant chosen so that the initial data is continuous.  We then describe the region of influence stemming from the cutoff point and derive explicit expressions for the density and velocity at the inner radius of the region of influence. In $1$D, these terms agree with the analysis done in \cite{Luong_1D} and no implosion occurs. However, in higher dimensions, implosion occurs.

\par To better understand the implosion profile, we solve for a self-similar solution that matches the asymptotic behavior observed in the numerics. Applying the self-similar ansatz yields an ODE system in the similarity variables, and we draw connections to the self-similar dynamical system presented in \cite{Guderley1942} and \cite{implosion_1}. The similarity ODE has a degeneracy at the starting point, requiring a specific direction integration.

\par We emphasize these computed similarity solutions are \textit{exact} self-similar solutions for $y \geq 1$ which are asymptotic limits to the PDE cutoff implosion numerical solution. To complete the description of the self-similar profiles, one must understand the profiles for $0 \leq y \leq 1$, corresponding to radial coordinates near $r = 0$. Recall in \cite{shao2025blowup3dcompressiblenavierstokes} the authors construct smooth self-similar solutions for a sequence of blowup exponents $s_n \to s^*$, where $s^*$ is exactly the blowup exponent considered in this work. A complete description of the self-similar profile could provide a connection to the sequence of smooth self-similar solutions of \cite{shao2025blowup3dcompressiblenavierstokes}. Additionally, one may use the full self-similar profile as initial data for hydrodynamic simulations.

\par Numerical simulations of the PDE are performed with a Lagrangian leapfrog scheme enhanced with adaptive mesh refinement. Various predicted quantities are compared to their numerical counterparts for times far from the implosion time. We find low percent error between these numerical and theoretical quantities. Approaching the implosion time, the numerical solution near blowup time agrees with the self-similar profiles, indicating the self-similar profiles are the asymptotic limit of the implosion. 

\par The motivation behind continuously cutting off the Kidder initial data is to determine whether it is necessary to have unbounded density in the far field and still maintain implosion. In the Kidder solution, all Lagrangian particles arrive at the origin at implosion time regardless of their initial position. These far field behaviors render the Kidder solution difficult to directly apply for physical implementation. The analysis done in this work shows the far field can be replaced with a much milder far field condition of constant density. This reveals the mechanics of the Kidder implosion does not rely on an infinite stream of fluid to the origin from the whole domain and can be achieved with a finite amount of fluid on a finite region.

\par The implosion results presented in this work are in contrast to the result established in \cite{Luong_1D} where cutting off the Kidder initial data in $1$D suppresses the implosion. Analysis done in this current work is done in general dimension and reveals a bifurcation in inner radius behavior when $n$ shifts from $n = 0$ to any larger index. The bifurcation indicates a feature inherit in the geometry of higher dimensions causes the implosion, further supported by the fact the implosion result holds for any $n > 0$ - not necessarily integer. Although non-integer dimension is physically uninterpretable, the radially symmetric, compressible Euler equations formally admits non-integer $n$ in \eqref{eq:con_mass}. The sharp difference in behavior from $1$D to any higher dimension indicates the cause may be focusing at the origin in higher dimensions. We comment we see no restrictions on how large $n$ can be from our analysis. Preliminary numerical investigation shows the predictions made in this paper holds for large $n$. It may be interesting to consider the ramifications of this work in the limit as $n$ approaches infinity. 

\par Numerical simulations support the predictions made in this work. The fact the imploding solution and its landmark features can be accurately computed is a promising indicator of the numerical stability of the cutoff implosion solution. In contrast, the solution of \cite{implosion_1} is numerically unstable as shown by \cite{biasi_compute}. One of the marked differences between the cutoff implosion solution and the implosions of \cite{buckmaster_implosion,Jenssen2020,implosion_1} is the implosion point in the cutoff implosion solution is non-stationary. A heuristic explanation given in the introduction of \cite{Luong_1D} argues why stationary implosions may struggle with instability. Given the cutoff implosion solution seems to enjoy numerical stability properties, searching for imploding solutions where the implosion occurs at a traveling point may be a fruitful strategy.

\par After the finite time implosion occurs, it is natural to ask whether it is possible to continue the solution. Physically, this corresponds to an \textit{explosion} and works such as \cite{Guderley1942} and \cite{vicol_explosion} consider such explosions originating from implosions. Another direction is to consider smoother cutoffs, after it has been established the Kidder implosion does not require unbounded initial data in the far field.

\appendix
\section{Equivalence of Self Similar Systems}\label{ap:equal}
In this appendix, we show the non-autonomous system \eqref{eq:ss_system} is the same as the non-autonomous system instantiated in \cite{Guderley1942} and elaborated in \cite{implosion_1} by comparing system \eqref{eq:ss_system} to the notation and derivation presented in \cite{biasi_compute}. 

\par First, rescale system \eqref{eq:ss_system} by dividing the top equation by $A^{n+2}$ and the bottom equation by $A^2$ where $A$ is defined in \eqref{eq:A}. We also rescale $\rhohat$ and $\vhat$ by setting $\rhohat = {\rhohat}/{A^{n+1}}$ and $\vhat = -{\vhat}/{A}$ (the negative rescaling is to match the definition of the self-similar ansatz for velocity in \cite{implosion_1}). In doing so, the initial conditions for $\rhohat$ and $\vhat$ in \eqref{eq:ss_ic} no longer depend on $r^*$. After the rescaling, system \eqref{eq:ss_system} becomes
\begin{equation}\label{eq:ss_system_2}
\begin{split}
\frac{\lambda}{A}\rhohat + y \rhohat' + (\rhohat \vhat)' + \frac{n\rhohat\vhat}{y} = 0, \\
\frac{\beta \lambda}{A} \vhat + y\vhat' + \vhat \vhat' + \gamma\rhohat^{\gamma - 2}\rhohat' = 0.
\end{split}
\end{equation}
Note we have defined pressure with constant $C = 1$ while $C = (\gamma- 1)/{\gamma}$ in \cite{implosion_1}. 

\par Let $s^*$ denote the inverse of the $(1-t)$ exponent in $r_l(t)$, so
\begin{equation}\label{eq:s}
    s^* = \frac{2}{1 + (n+1)^{-1/{2}}}.
\end{equation}
The exponent $s^*$ is denoted as $r$ in \cite{implosion_1} and \cite{biasi_compute}. Note the value of \eqref{eq:s} is the upper bound of the blowup exponents in \cite{implosion_1} and \cite{shao2025blowup3dcompressiblenavierstokes}. Recall $\beta = {1}/(n+1)$ which is equal to ${1}/{\ell} = (\gamma - 1)/{2}$. The final step to show equivalence between the systems is to show the coefficients ${\lambda}/{A}$ and ${\beta \lambda }/{A}$ in front of $\rhohat$ in the top equation and $\vhat$ in the bottom equation of \eqref{eq:ss_system_2} are $\ell(s^* - 1)$ and  $s^* - 1$ respectively. Recalling the value of ${\lambda}/{A}$ from \eqref{eq:hat_slopes}, we compute $s^* - 1$ and see
\begin{equation*}
    s^* - 1 = -\frac{1 - \sqrt{n+1}}{1 + \sqrt{n+1}}
\end{equation*}
confirming ${\lambda}/{A} = \ell(s^*-1)$ and ${\beta \lambda}/{A} = s^* - 1$. Thus, we have established the equivalence of the non-autonomous systems and presented here and in the literature. The autonomous systems are subsequently equivalent by applying the same Emden transform to both.

\printcredits

\section*{Acknowledgments}
The authors would like to thank Alexis Vasseur for his insights in the application of weak-strong uniqueness results and Gui-Qiang Chen for informing the authors of global existence results for the compressible Euler equations. The first author would like to thank Evan Davis and Yifan Gu for preliminary conversations on implosions and potential flow and Richard Bowers for conversations on the Lagrangian leapfrog numerical method. The authors would like to declare that a version of Claude 4.5 for LANL was used for editing and to check the reproducibility of the numerical method.

\bibliographystyle{cas-model2-names}

\bibliography{cas-refs}



\end{document}